%%%%%%%%%%%%%%%%%%%%%%%%%%%%%%%%%%%%%%%%%%%%%%%%%%%%%%%%%
%
% Noncommutative quasi-resolution
% Part 2
%
%
%  written in AMSLaTex
%
%%%%%%%%%%%%%%%%%%%%%%%%%%%%%%%%%%%%%%%%%%%%%%%%%%%%%%%%%%
\documentclass{amsart}
\usepackage{latexsym,amsxtra,amscd,ifthen}
\usepackage{amsfonts}
\usepackage{verbatim}
\usepackage{amsmath}
\usepackage{amsthm}
\usepackage{amssymb}
\usepackage{enumerate}
\usepackage{url}
\usepackage{soul}

\theoremstyle{plain}

% The following line would produce Theorem A, Theorem B, etc., instead
% of Theorem 1, Theorem 2, etc. when we use maintheorem.

% \renewcommand{\themaintheorem}{\Alph{maintheorem}}

% Use the 'theorem' counter to number theorems, etc., so everything
% is numbered with the same scheme.
\newtheorem{theorem}{Theorem}
\newtheorem{lemma}[theorem]{Lemma}
\newtheorem{proposition}[theorem]{Proposition}
\newtheorem{corollary}[theorem]{Corollary}
\newtheorem{conjecture}[theorem]{Conjecture}

% Number theorems, etc., within each section.
% Number equations with each theorem (so when proving theorem 2.3, you
% get equations (2.3.1), (2.3.2), etc.)
\numberwithin{theorem}{section}
\numberwithin{equation}{theorem}

\theoremstyle{definition}
\newtheorem{definition}[theorem]{Definition}
\newtheorem{notation}[theorem]{Notation}
\newtheorem{example}[theorem]{Example}
\newtheorem{remark}[theorem]{Remark}
\newtheorem{question}[theorem]{Question}
\newtheorem*{question*}{Question}
\newtheorem{hypothesis}[theorem]{Hypothesis}

\newcommand{\N}{\mathbb{N}}
\newcommand{\R}{\mathbb{R}}

\DeclareMathOperator{\op}{op}

\DeclareMathOperator{\End}{End}
\DeclareMathOperator{\Ext}{Ext}
\DeclareMathOperator{\Tor}{Tor}
\DeclareMathOperator{\Spec}{Spec}
\DeclareMathOperator{\Hom}{Hom}

\DeclareMathOperator{\Aut}{Aut}

\DeclareMathOperator{\ann}{Ann}
\DeclareMathOperator{\htp}{ht}
\DeclareMathOperator{\im}{Im}

\DeclareMathOperator{\GK}{GKdim}

\DeclareMathOperator{\K}{Kdim}

\DeclareMathOperator{\Mod}{Mod}
\DeclareMathOperator{\QMod}{QMod}
\DeclareMathOperator{\qmod}{qmod}
\DeclareMathOperator{\modu}{mod}

\DeclareMathOperator{\dep}{dep}
\DeclareMathOperator{\CM}{CM}
\DeclareMathOperator{\pd}{projdim}
\DeclareMathOperator{\injdim}{injdim}
\DeclareMathOperator{\gldim}{gldim}
\DeclareMathOperator{\add}{add}
\DeclareMathOperator{\proj}{proj}

\DeclareMathOperator{\refl}{ref}
\DeclareMathOperator{\Hm}{H}

\DeclareMathOperator{\coker}{coker}

\begin{document}

\title{Noncommutative quasi-resolutions}

\author{X.-S. Qin, Y.-H. Wang and J.J. Zhang}

\address{Qin: Shanghai Center for Mathematical Sciences,
School of Mathematical Sciences, Fudan
University, Shanghai 200433, China}

\email{13110840002@fudan.edu.cn}

\address{Wang: School of Mathematics, Shanghai key Laboratory
of Financial Information Technology, Shanghai University of
Finance and Economics, Shanghai 200433, China}

\email{yhw@mail.shufe.edu.cn}

\address{Zhang: Department of Mathematics, Box 354350,
University of Washington, Seattle, Washington 98195, USA}

\email{zhang@math.washington.edu}

\begin{abstract}
The notion of a noncommutative quasi-resolution is
introduced for a noncommutative noetherian algebra
with singularities, even for a non-Cohen-Macaulay algebra.
If $A$ is a commutative normal Gorenstein domain, then a
noncommutative quasi-resolution of $A$ naturally produces a
noncommutative crepant resolution (NCCR) of $A$ in the sense
of Van den Bergh, and vice versa. Under some mild hypotheses,
we prove that
\begin{enumerate}
\item[$(i)$]
in dimension two, all noncommutative quasi-resolutions of a
given noncommutative algebra are Morita equivalent, and
\item[$(ii)$]
in dimension three, all noncommutative quasi-resolutions of
a given noncommutative algebra are derived equivalent.
\end{enumerate}
These assertions generalize important results of Van den Bergh,
Iyama-Reiten and Iyama-Wemyss in the commutative and
central-finite cases.
\end{abstract}

\subjclass[2010]{Primary 16E65, 16E30, 16S38, 14A22}

\keywords{noncommutative crepant resolution (NCCR),
noncommutative quasi-resolution (NQR), Morita equivalent,
derived equivalent, Auslander-Gorenstein algebra,
Auslander regular algebra, Cohen-Macaulay algebra,
dimension function}

%\thanks{ }

\maketitle

% \tableofcontents

\section*{Introduction}

A famous conjecture of Bondal-Orlov \cite{BO1, BO2} in birational
geometry states

\begin{conjecture} \cite{BO1, BO2}
\label{xxcon0.1}
If $Y_1$ and $Y_2$ are crepant resolutions of a
scheme $X$, then derived categories $D^b({\rm{coh}}(Y_1))$
and $D^b({\rm{coh}}(Y_2))$ are equivalent.
\end{conjecture}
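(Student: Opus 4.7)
The plan is to attack the conjecture through a common noncommutative model, following the strategy pioneered by Van den Bergh. For each crepant resolution $\pi_i\colon Y_i\to X$ I would first try to construct a tilting bundle $T_i$ on $Y_i$ whose endomorphism algebra $A_i:=\End_{Y_i}(T_i)$ is a noncommutative crepant resolution (NCCR) of the affine coordinate ring of $X$. Standard tilting theory then produces an equivalence
\[
\mathbf{R}\Hom_{Y_i}(T_i,-)\colon D^b(\mathrm{coh}(Y_i))\xrightarrow{\;\sim\;} D^b(\modu A_i),
\]
reducing the geometric question to the algebraic one of whether $A_1$ and $A_2$ are derived equivalent.

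The second step is precisely what the paper is designed to address in the broader framework of noncommutative quasi-resolutions (NQRs): part (ii) of the main result will assert that any two NQRs of a fixed algebra are derived equivalent in dimension three. Granted the construction of $T_1,T_2$ above, one then concatenates
\[
D^b(\mathrm{coh}(Y_1))\simeq D^b(\modu A_1)\simeq D^b(\modu A_2)\simeq D^b(\mathrm{coh}(Y_2)),
\]
which is the desired conclusion. In dimension two the conjecture is in fact trivial because the minimal crepant resolution is unique; part (i) would refine this to a Morita statement on the noncommutative side, giving a genuinely stronger conclusion there.

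The principal obstacle is the first step: existence of a tilting bundle $T_i$ with the prescribed endomorphism algebra. Van den Bergh settled this for projective crepant resolutions of three-dimensional Gorenstein terminal singularities by passing to formal neighbourhoods and gluing, but no analogue is known in arbitrary dimension or for resolutions that are only proper (rather than projective) over $X$; this is exactly why the full Bondal--Orlov conjecture remains open. Even once $T_i$ is in hand, identifying $A_i$ with the NQR produced by the paper's machinery requires a careful comparison of reflexive generating modules and a verification of the depth and Auslander--Gorenstein conditions that enter the definition of an NQR. My suggested line of attack is therefore: under the paper's standing hypotheses, construct a single noncommutative quasi-resolution $A$ of the coordinate ring of $X$; use tilting theory to show that each $D^b(\mathrm{coh}(Y_i))$ is equivalent to $D^b(\modu A)$; then apply part (ii) of the main theorem to finish. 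Essentially all of the remaining geometric difficulty is concentrated in the construction and comparison of these tilting bundles, which is where I would expect most of the work to live.
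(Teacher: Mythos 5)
The statement you were asked to prove is a \emph{conjecture}: the paper cites Bondal--Orlov's Conjecture \ref{xxcon0.1} as motivation and explicitly records that it is open in dimension $\geq 4$, with the known cases being Kapranov--Vasserot ($\dim = 2$) and Bridgeland ($\dim = 3$). There is no proof of it in the paper, and you are right not to pretend to give one. Your write-up is therefore best read as a strategy memo rather than a proof, and judged on that footing it is essentially accurate: it correctly describes the Van den Bergh reduction (find tilting bundles $T_i$ on each $Y_i$, pass to $A_i = \End(T_i)$, and convert the geometric derived equivalence into an algebraic one), correctly identifies the paper's Theorem \ref{xxthm0.6}(2) (and its refinements, Theorems \ref{xxthm4.2} and \ref{xxthm6.6}) as a tool that would finish the algebraic half, and correctly locates the genuine obstruction in the existence and comparison of the tilting bundles.

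Two points of calibration are worth making. First, the paper's actual theorems concern noncommutative (quasi-)resolutions of a fixed algebra $A$, not commutative crepant resolutions $Y_i \to X$; to apply them in your step two you would need to know not merely that $A_1$ and $A_2$ are NQRs of something, but that they are NQRs of the \emph{same} algebra in the \emph{same} category $\mathcal{A}$ with the hypotheses of Hypothesis \ref{xxhyp6.5} verified --- in the commutative normal Gorenstein case this is Propositions \ref{xxpro7.5}--\ref{xxpro7.6}, but in general this identification is itself nontrivial (cf. Remark \ref{xxrem8.4}(3)). Second, your remark that the surface case is ``trivial because the minimal crepant resolution is unique'' is correct on the geometric side, but note that the paper's dimension-two statement (Morita equivalence of all NQRs) is a different and genuinely noncommutative assertion, not a restatement of uniqueness of the minimal resolution. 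Neither point is a gap in your reasoning --- you flagged the tilting-bundle obstacle honestly --- but both are places where ``reduce to the paper's theorem'' hides real work.
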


In dimension three (respectively, two), this conjecture was proved by
Bridgeland \cite{Br} in 2002 (respectively, by Kapranov-Vasserot
\cite{KV} in 2000). The conjecture is still open in higher dimensions.
Noticed by Van den Bergh \cite{VdB1} in the study of one-dimensional
fibres and by Bridgeland-King-Reid \cite{BKR} in the study of the McKay
correspondence for dimension $d\leq3$ that both $D^b({\rm{coh}}(Y_1))$ and
$D^b({\rm{coh}}(Y_2))$ are equivalent to the derived category of certain
noncommutative rings. Motivated by Conjecture \ref{xxcon0.1} and work of
\cite{BKR, Br, VdB1}, Van den Bergh \cite{VdB2} introduced the notation
of a noncommutative crepant resolution (NCCR) of a commutative normal
Gorenstein domain $A$ (in the original reference, the author used the 
notation $R$). Let us recall the definition of a NCCR given in
\cite[Section 8]{IR} which is quite close to the original definition
of Van den Bergh \cite[Definition 4.1]{VdB2}. As usual, CM stands for
Cohen-Macaulay.

\begin{definition}
\label{xxdef0.2}
Let $R$ be a noetherian commutative CM ring and let $A$ be a
module-finite $R$-algebra.
\begin{enumerate}
\item[$(1)$] \cite{Au}
$A$ is called an {\it $R$-order} if $A$ is a maximal CM
$R$-module. An $R$-order $A$ is called {\it non-singular} if
$\gldim A_{\mathfrak{p}}=\K R_{\mathfrak{p}}$
for all ${\mathfrak{p}}\in\Spec(R).$
\item[$(2)$] \cite[Section 8]{IR}
Let $M$ be a finitely generated right $A$-module that is reflexive.
We say that $M$ gives a {\it noncommutative crepant resolution, or NCCR},
$B:=\End_{A}(M)$ of $A$ if
\begin{enumerate}
\item[$(i)$]
$M$ is a height one progenerator of $A$ 
(namely, $M_{\mathfrak{p}}$ is a progenerator of $A_{\mathfrak{p}}$ 
for any height one prime ideal $\mathfrak{p}$ of $R$), and
\item[$(ii)$]
$B$ is a non-singular $R$-order.
\end{enumerate}
\end{enumerate}
\end{definition}

Note that Van den Bergh's original definition of a NCCR was only for
$A=R$ being Gorenstein, since these are the types of varieties
which have a chance of admitting crepant resolutions and so there is
a good analogy with geometry \cite[after Definition 1.2]{IW1}.
However when $A$ is non-Gorenstein (but CM) there are sometimes many
NCCRs of $A,$ and these are related to cluster tilting (CT) objects
in the category of CM modules over $A$ \cite[Corollary 5.9]{IW2}.
Thus, although geometrically we are only really interested in NCCRs
when $A$ is Gorenstein, there are strong algebraic reasons to consider
the more general case. In this paper we will further relax the
hypotheses on $A$: we allow $A$ to be non-CM and to be noncommutative
in the most general sense. Van den Bergh made the following conjecture,
which is an extension of Bondal-Orlov Conjecture \ref{xxcon0.1}.

\begin{conjecture} \cite[Conjecture 4.6]{VdB2}
\label{xxcon0.3}
If $A$ is a normal Gorenstein domain, then all crepant resolutions of
$\Spec(A)$ {\rm{(}}commutative and noncommutative{\rm{)}} are derived
equivalent.
\end{conjecture}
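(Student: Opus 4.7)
The plan is to reduce Conjecture~\ref{xxcon0.3} to a purely algebraic statement about noncommutative crepant resolutions of $A$, and then to attack the derived equivalence of two NCCRs via a tilting-module argument. First I would invoke the fact (established by Van den Bergh and Bridgeland--King--Reid in low dimension) that every commutative crepant resolution $Y \to \Spec(A)$ carries a tilting bundle $\mathcal{T}$ whose endomorphism algebra $B := \End_Y(\mathcal{T})$ is an NCCR of $A$; this already provides a derived equivalence $D^b(\mathrm{coh}\, Y) \simeq D^b(\mathrm{mod}\, B)$. It therefore suffices to prove that any two NCCRs $B_i := \End_A(M_i)$, $i = 1,2$, of a fixed normal Gorenstein domain $A$ are derived equivalent.

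For that algebraic comparison I would form the $(B_1, B_2)$-bimodule $T := \Hom_A(M_1, M_2)$ and try to verify that it is a classical tilting complex over $B_1$: namely (a) $T$ has finite projective dimension over $B_1$; (b) $\Ext^i_{B_1}(T,T) = 0$ for all $i > 0$; and (c) $T$ generates the perfect derived category $\mathrm{perf}(B_1)$. Finiteness of projective dimension is immediate from $B_1$ being a non-singular $R$-order, hence of finite global dimension. Generation follows from $M_1$ being a height-one progenerator of $A$, since reflexivity propagates the progenerator condition from codimension one to the whole of $\Spec R$. Condition (b) is the crux.

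The main obstacle is precisely the Ext vanishing in (b). On localizations at primes $\mathfrak{p}$ with $\htp \mathfrak{p} \le 1$ both $M_i$ are progenerators and the higher Ext groups vanish trivially, so the actual content is to control $\Ext^i_{B_1}(T,T)$ in codimension at least two. For this I would use the maximal Cohen--Macaulay hypothesis on $M_1$ and $M_2$ together with local duality over the Gorenstein ring $A$: a depth computation forces any nonvanishing higher Ext to be supported in codimension $\ge 2$, and then a second-syzygy (reflexive equalizer) argument kills it. This is essentially the Iyama--Reiten strategy; it succeeds cleanly in dimension two (yielding Morita equivalence, since $T$ is then already a progenerator) and in dimension three (where one extra layer of Ext vanishing can still be absorbed), but in higher dimensions the depth bookkeeping degrades and intermediate cohomology may persist, which is exactly where Conjecture~\ref{xxcon0.3} becomes open. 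The NQR framework introduced in this paper is designed to decouple the derived-equivalence argument from the rigid commutative/CM hypotheses on $A$, and thereby to extend the low-dimensional cases to the broader noncommutative setting announced in the abstract.
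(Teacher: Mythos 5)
This statement is a conjecture of Van den Bergh, quoted by the paper, not a theorem that the paper proves. The paper explicitly records that the conjecture is open beyond dimension three: it cites Van den Bergh's proof for three-dimensional terminal singularities, Iyama--Reiten's proof of the noncommutative part in dimension three, and Iyama--Wemyss's extension to CM base rings in dimension three, and then proves generalizations of these low-dimensional results in the NQR framework (Theorem \ref{xxthm0.6}, via Theorems \ref{xxthm4.2} and \ref{xxthm6.6}). So there is no proof in the paper to compare against, and a complete proof of Conjecture \ref{xxcon0.3} should not be expected.

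With that understood, your sketch is an honest and largely accurate account of the known strategy rather than a proof. The reduction to comparing two NCCRs $B_i = \End_A(M_i)$ via the bimodule $T = \Hom_A(M_1, M_2)$, the verification of Definition \ref{xxdef6.1}(a,b,c), and the depth/Ext-vanishing bookkeeping is exactly the Iyama--Reiten machinery, and your diagnosis that it collapses to Morita equivalence when $d = 2$, yields derived equivalence when $d = 3$, and stalls beyond is correct; this is also the shape of the paper's Theorems \ref{xxthm4.2} and \ref{xxthm6.6}. Two caveats are worth flagging. First, your opening reduction of the commutative side to the noncommutative side (every commutative crepant resolution carries a tilting bundle whose endomorphism algebra is an NCCR) is itself only established in low dimensions; as the paper notes, in high dimension the existence of commutative crepant resolutions is not even equivalent to the existence of NCCRs (\cite{IW1}), so the conjecture should really be split into a commutative half and a noncommutative half, and your global reduction step already hits an obstruction. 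Second, in your condition (b) you speak of controlling $\Ext^i_{B_1}(T,T)$ in codimension $\ge 2$ by a ``second-syzygy'' argument; the precise mechanism (both in Iyama--Reiten and in this paper) is that $\pd_{B_2} U \le 1$ for a reflexive bimodule $U$ over an Auslander regular algebra of dimension $3$, so only $\Ext^1$ needs to be killed, and it is killed by showing it lies in $\modu_0$ and then invoking a depth-based vanishing (Lemmas \ref{xxlem5.6}, \ref{xxlem5.9}, \ref{xxlem6.3}); this does not survive to $d \ge 4$ because $\pd U$ can then be $\ge 2$ and the intermediate $\Ext$ groups are no longer forced to have small support. You correctly identify this as the frontier of the conjecture.
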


Van den Bergh proved this conjecture for 3-dimensional terminal
singularities \cite[Theorem 6.6.3]{VdB2}. Since the existence of
commutative crepant resolutions is not equivalent to the existence
of noncommutative crepant resolutions in high dimension \cite{IW1},
one should probably break up the above conjecture into two parts:
commutative crepant resolutions and noncommutative crepant resolutions.
In \cite[Section 8]{IR}, Iyama-Reiten proved the noncommutative part
of this conjecture for noncommutative algebras $A$ as in
Definition \ref{xxdef0.2} in dimension three. Similarly in
\cite{IW1}, Iyama-Wemyss proved Conjecture \ref{xxcon0.3} for
NCCRs for CM algebras $A(=R)$ in dimension three, therefore generalizing
\cite[Corollary 8.8]{IR} to algebras which do not have
Gorenstein base rings.

\begin{theorem}
\label{xxthm0.4}
\begin{enumerate}
\item[$(1)$]
\cite[Corollary 8.8]{IR}
Let $R$ be a commutative normal Gorenstein domain with $\K R\leq3$ and
$A$ a module-finite $R$-algebra. Then all NCCRs of $A$
are derived equivalent.
\item[$(2)$]
\cite[Theorem 1.5]{IW1}
Let $A$ be a  $d$-dimensional $\CM$ equi-codimensional commutative normal domain
with a canonical module.
\begin{enumerate}
\item[$(2a)$]
If $d=2,$ then all NCCRs of $A$ are Morita equivalent.
\item[$(2b)$]
If $d=3,$ then all NCCRs of $A$ are derived equivalent.
\end{enumerate}
\end{enumerate}
\end{theorem}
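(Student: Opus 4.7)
The plan is to prove both parts by constructing, for any two NCCRs $B_1 = \End_A(M_1)$ and $B_2 = \End_A(M_2)$, an explicit bimodule $T := \Hom_A(M_1, M_2)$ and showing it realizes either a derived equivalence (dimensions $2, 3$) or a Morita equivalence (dimension $2$). The strategy divides into four ingredients: (a) homological control of the NCCRs coming from the non-singular $R$-order condition, (b) a reflexive-equivalence argument identifying $\Hom_A(M_1, -)$ as an equivalence on reflexive modules, (c) vanishing of self-$\Ext$ of $T$, and (d) a bound on $\pd_{B_1} T$ that either yields a tilting module or, in dimension two, a progenerator.

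First, I would exploit the hypothesis that each $B_i$ is a non-singular $R$-order to deduce that $B_{i,\mathfrak{p}}$ has finite global dimension equal to $\K R_\mathfrak{p}$ for every $\mathfrak{p} \in \Spec R$. Combined with the CM/Gorenstein hypothesis on $R$ (respectively, the canonical module and equi-codimensional hypothesis on $A$), this provides the local Auslander-regular behavior needed downstream. Next, since $M_i$ is reflexive and is a progenerator in height one, the classical reflexive-equivalence argument (a commutative-setting version of Auslander's theorem, applied over the height-one-punctured spectrum and then extended by reflexivity) shows that $F := \Hom_A(M_1, -)$ sends reflexive $A$-modules to reflexive $B_1$-modules and is an equivalence on those subcategories; in particular $T = F(M_2)$ is reflexive over $B_1$.

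Step three is the heart of the argument: for $i \geq 1$ one needs
\[
\Ext^i_{B_1}(T, T) \cong \Ext^i_A(M_2, M_2) = 0.
\]
The isomorphism follows from the derived version of the reflexive equivalence applied to a projective resolution of $T$; the vanishing on the right uses that $B_2 = \End_A(M_2)$ has finite global dimension, forcing $M_2$ to be rigid as an $A$-module in the range of degrees allowed by the dimension bound $d \leq 3$. Bounding $\pd_{B_1} T \leq d - 1$ is done by localizing at each prime and applying the non-singularity of $B_{1,\mathfrak{p}}$; combined with a standard generation argument (for example, showing $B_1 \in \langle T \rangle$ in $D^b(B_1)$ using that $M_1$ is obtained from $M_2$ by a reflexive approximation), this shows $T$ is a tilting $(B_1, B_2)$-bimodule and yields the derived equivalence of parts (1) and (2b). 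For part (2a), the sharper bound $\pd_{B_1} T \leq 1$ together with the depth-two condition coming from reflexivity over the non-singular two-dimensional order $B_1$ forces $\pd_{B_1} T = 0$, so $T$ is a finitely generated projective $B_1$-module with $\End_{B_1}(T) = B_2$, which is the required Morita equivalence.

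The principal obstacle is Step three: deducing the self-$\Ext$ vanishing of $M_2$ over $A$ from the smoothness of $B_2 = \End_A(M_2)$, \emph{without} assuming $A$ itself has good homological properties. This is exactly the step that forces the restriction $d \leq 3$: one must transport the global-dimension bound on $B_2$ into a depth statement on $M_2$ using the CM/Gorenstein hypothesis on the base, and the dimension hypothesis is what keeps the relevant $\Ext$ indices below the threshold where depth arguments fail. The equi-codimensional and canonical-module assumptions in part (2) are invoked precisely here, to make the depth calculation uniform across all primes of $R$.
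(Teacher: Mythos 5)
Theorem \ref{xxthm0.4} is cited, not proved, in this paper: both parts are quotations from Iyama--Reiten and Iyama--Wemyss, and the paper's own contribution is the noncommutative generalization (Theorems \ref{xxthm4.2} and \ref{xxthm6.6}) together with the bridge in Section \ref{xxsec7} (Propositions \ref{xxpro7.5} and \ref{xxpro7.6}) identifying NCCRs with NQRs in the commutative/central-finite case. Your outline instead follows the original IR/IW tilting argument --- form $T := \Hom_A(M_1, M_2)$, show it is reflexive over $B_1$ with $\pd_{B_1} T \leq 1$, prove self-$\Ext$ vanishing, and conclude $T$ is tilting (resp.\ a progenerator for $d = 2$) --- which is the right blueprint, but your Step 3 contains a genuine error.

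You claim $\Ext^i_{B_1}(T, T) \cong \Ext^i_A(M_2, M_2)$ and that the right side vanishes because $B_2$ has finite global dimension. Neither assertion holds. The reflexive equivalence $\Hom_A(M_1, -)\colon \refl A \to \refl B_1$ is neither exact nor a derived equivalence, so it does not transport higher $\Ext$ groups; and $M_2$ is generically \emph{not} rigid over $A$ even when $\End_A(M_2)$ is a NCCR. Already for the $A_1$ Kleinian singularity $A = \Bbbk[[x,y,z]]/(xy-z^2)$ and $M_2 = A \oplus \mathfrak{m}$, the AR sequence $0 \to \mathfrak{m} \to A^2 \to \mathfrak{m} \to 0$ gives $\Ext^1_A(M_2, M_2) \neq 0$, while $\End_A(M_2)$ is a NCCR. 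The correct mechanism, both in IR/IW and in this paper's Theorem \ref{xxthm6.2}, goes through the \emph{support} of $\Ext^1_{B_1}(T,T)$, not an $\Ext$-transport from $A$: the height-one progenerator condition on $M_1, M_2$ makes $T_{\mathfrak{p}}$ projective over $B_{1,\mathfrak{p}}$ for every $\mathfrak{p}$ with $\htp\mathfrak{p} \leq 1$, so $\Ext^1_{B_1}(T,T)$ is supported in codimension $\geq 2$ (and in fact has finite length when $d=3$, as one sees from the Ischebeck spectral sequence argument in Lemma \ref{xxlem6.3} and Proposition \ref{xxpro2.18}). One then applies the depth lemma (IR Lemma 8.5, reproduced here as Lemma \ref{xxlem5.9}) to the four-term exact sequence coming from a projective presentation $0 \to Q_1 \to Q_0 \to T \to 0$, using that $\Hom_{B_1}(T,T)\cong B_2$ has depth $d$ and $\Hom_{B_1}(Q_i,T)$ has depth $\geq 2$ by reflexivity (Lemma \ref{xxlem5.6}, Corollary \ref{xxcor5.8}); this kills $\Ext^1_{B_1}(T,T)$ precisely when $d\leq 3$. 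With Step 3 repaired in this way, the remainder of your outline is essentially the IR/IW proof.
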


Iyama-Wemyss \cite[Theorem 1.7]{IW1} also gave a sufficient condition
in arbitrary dimension ($d=\K A$) to establish when any two given
NCCRs of $R$ are derived equivalent.

The study of noncommutative singularities naturally leads a question of
how to deal with algebras that are not module-finite over their centers.
Such questions were implicitly asked in \cite{CKWZ1, CKWZ2}. Before we
present our solution, we would like to discuss some major differences
between the commutative and the noncommutative settings.

\begin{enumerate}
\item[($\bullet$)]
First of all, some of previous approaches of using moduli \cite{Br}
need to be re-developed in order to prove equivalences of derived
categories in a general noncommutative setting. However, very
little is known about the theory of general noncommutative moduli.
\item[($\bullet$)]
We say an algebra is {\it central-finite} if it is module-finite
over its center (or more precisely, a finite module over its
center). When algebras are not central-finite, some homological
tools fail due to the fact that localization does not work well
in the noncommutative setting. Our idea is to work with global
structures without going to the localization. For example, we
use Auslander regular algebras instead of algebras having
finite global dimension. In the commutative case, the Auslander
condition is automatic. It is easy to see that Auslander regular
algebras are a natural generalization of homologically
homogeneous algebras which are used in Van den Bergh's definition
of a NCCR.
\item[($\bullet$)]
In the commutative case, the Krull dimension, denoted by $\K$
(see \cite[Ch.6]{MR}), is used extensively and is implicitly
assumed. It is well-known that the Krull dimension might not
be a good dimension function in the noncommutative case.
Sometimes Gelfand-Kirillov dimension, denoted by $\GK$ (see
\cite{KL} and \cite[Ch.8]{MR}), is better than the Krull
dimension, and at other times vice versa. In the noncommutative
case, it is necessary to consider an abstract dimension function
(or several different ones in the different settings).
\end{enumerate}

Let $\partial$ be an exact symmetric dimension function in the sense
of Definition \ref{xxdef1.2} and Hypothesis \ref{xxhyp1.3}(3) (or
\cite[Section 6.8.4]{MR}) which is defined for all right $A$-modules
where $A$ is an algebra. Let $D$ be another algebra. Two right
$A$-modules (respectively, $(D,A)$-bimodules) $M$ and $N$ are called
{\it $s$-isomorphic} if there are a third right $A$-module
(respectively, $(D,A)$-bimodule) $P$ and two right $A$-module
(respectively, $(D,A)$-bimodule) maps
$$f: M\to P, \quad {\text{and}}\quad g: N\to P$$
such that the kernel and the cokernel of $f$ and $g$ (viewed as
right $A$-modules) have $\partial$-dimension less than or equal
to $s$. In this case, we write $M\cong_s N$. We refer to Definition
\ref{xxdef1.5} for more details. To state our main result without
going to too much details, we give a definition of a noncommutative
quasi-resolution in the following special case. Some technical
details are explained in Section \ref{xxsec3}.

\begin{definition}
\label{xxdef0.5}
We fix the dimension function $\partial$ to be $\GK$. Let $A$ be
a noetherian locally finite ${\mathbb N}$-graded algebra with
$\GK(A)=d\in {\mathbb N}$. If there are a noetherian locally
finite ${\mathbb N}$-graded Auslander regular CM algebra $B$
(see Definitions \ref{xxdef2.1} and \ref{xxdef2.3}) with
$\GK(B)=d$ and two ${\mathbb Z}$-graded bimodules $_{B}M_{A}$ and
$_{A}N_{B}$, finitely generated on both sides, such that
$$M\otimes_{A} N\cong_{d-2} B, \quad {\text{and}}\quad
N\otimes_{B} M\cong_{d-2} A$$
as ${\mathbb Z}$-graded bimodules, then the triple $(B,M,N)$ or
simply the algebra $B$ is called a {\it noncommutative
quasi-resolution} (or {\it NQR} for short) of $A$.
\end{definition}

An ungraded version of the above definition is given in Definition
\ref{xxdef3.2} (also see Definition \ref{xxdef3.16} for a related
definition). Note that Van den Bergh considers a normal 
Gorenstein noetherian commutative integral
domain $A$. By a classical theorem of Serre, being normal
 %(i.e. integrally closed inbits field of fractions) 
 is equivalent to these two conditions: for every prime ideal
$\mathfrak{p}\subseteq A$ of height $\leq 1$ the local ring $A_{\mathfrak{p}}$ is regular,
 and for every prime ideal $\mathfrak{p}\subseteq A$
of height $\geq 2$  the local ring $A_{\mathfrak{p}}$ has depth $\geq 2$,
which is related to Definition \ref{xxdef0.5}.

By Proposition \ref{xxpro7.5}, Van den Bergh's NCCRs
(or Iyama-Reiten's version, see Definition \ref{xxdef0.2}) produce
naturally examples of the ungraded version of NQRs. Noncommutative
examples of NQRs are given in Section 8. Our main theorem is to
prove a version of Conjecture \ref{xxcon0.3} for NQRs in dimension
no more than three.

\begin{theorem}
\label{xxthm0.6}
Fix $\partial$ to be $\GK$ as in the setting of Definition \ref{xxdef0.5}.
Let $A$ be a noetherian locally finite ${\mathbb N}$-graded algebra over
the base field.
\begin{enumerate}
\item[$(1)$]
Suppose $\GK(A)=2$. Then all NQRs of $A$ are Morita equivalent.
\item[$(2)$]
Suppose $\GK(A)=3$. Then all NQRs of $A$ are derived equivalent.
\end{enumerate}
\end{theorem}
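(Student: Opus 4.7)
The plan is to follow the blueprint of Iyama-Reiten and Iyama-Wemyss, but systematically replace localization and Krull-dimensional arguments by dimension-function arguments and passage to a suitable Serre quotient category. Given two NQRs $(B_1,M_1,N_1)$ and $(B_2,M_2,N_2)$ of $A$, I would form the candidate bimodules $T := M_1 \otimes_A N_2$, a $(B_1,B_2)$-bimodule, and $T^\vee := M_2\otimes_A N_1$, a $(B_2,B_1)$-bimodule. Associativity of tensor product together with the defining $s$-isomorphisms ($s=d-2$) yields
$$T\otimes_{B_2} T^\vee \;\cong_{d-2}\; B_1, \qquad T^\vee\otimes_{B_1} T \;\cong_{d-2}\; B_2,$$
so $T$ and $T^\vee$ already implement an equivalence ``up to $(d-2)$-dimensional noise''. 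Let $\mathcal{T}_{d-2}$ denote the Serre subcategory of finitely generated modules of $\partial$-dimension at most $d-2$ and $\pi$ the quotient functor. Modulo $\mathcal{T}_{d-2}$ every $s$-isomorphism becomes a genuine isomorphism, so $\pi(T)$ and $\pi(T^\vee)$ induce quasi-inverse equivalences between the quotient module categories of $B_1$ and $B_2$.

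For part (1), with $\GK(A)=2$ and $s=0$, the quotient is taken by finite-dimensional modules. Because $B_1$ and $B_2$ are Auslander regular and CM with $\GK=2$, a depth argument forces an $S_2$-type condition guaranteeing that every finitely generated reflexive $B_i$-bimodule is recovered from its image under $\pi$ via the associated section functor. I would verify that $B_i$ together with $T$ and $T^\vee$ are reflexive on each side, and then use this $S_2$-reflection to lift the equivalence at the quotient level to honest bimodule isomorphisms $T\otimes_{B_2} T^\vee \cong B_1$ and $T^\vee\otimes_{B_1} T\cong B_2$, which is Morita equivalence.

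For part (2), with $\GK(A)=3$ and $s=1$, $s$-isomorphisms can no longer be lifted to isomorphisms of bimodules, and the lift must take place in the derived category. The target is to prove that $T$, viewed as a right $B_2$-module, is a tilting bimodule with endomorphism ring $B_1$. The vanishing $\Ext^i_{B_2}(T,T)=0$ for $i>0$ and the identity $\End_{B_2}(T)\cong B_1$ should follow from the Auslander regular and CM hypotheses via local-duality and spectral-sequence arguments, which convert ``vanishing off the diagonal in high codimension'' into honest vanishing once $1$-dimensional noise is absorbed. The generation requirement is then supplied by the $\cong_1$-identity for $T^\vee\otimes_{B_1} T$ and the finite global dimension of $B_2$, and Rickard's theorem produces the desired derived equivalence.

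The main technical obstacle in both parts is the lifting step: the equivalence is granted only modulo $\mathcal{T}_{d-2}$ and must be promoted to an honest Morita (resp.\ derived) equivalence. In dimension two this reduces to an $S_2$-reflection principle for Auslander regular CM graded algebras; in dimension three it reduces to Ext-vanishing and generation of the candidate tilting bimodule, which requires the Auslander condition to be invoked carefully on both sides of $T$. This is where the graded locally finite hypothesis plays the role that localization at height-one primes plays in the commutative case, and it is the step that directly generalizes \cite[Corollary 8.8]{IR} and \cite[Theorem 1.5]{IW1}.
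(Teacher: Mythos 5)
Your overall architecture matches the paper's: form $T = M_1\otimes_A N_2$ and $\widetilde{T} = M_2\otimes_A N_1$, deduce the $(d-2)$-isomorphisms $T\otimes_{B_2}\widetilde{T}\cong_{d-2}B_1$ and $\widetilde{T}\otimes_{B_1}T\cong_{d-2}B_2$, descend to an equivalence of quotient categories, and then upgrade to Morita (resp.\ derived) equivalence. The issue is in the upgrade step. You propose to ``verify that $B_i$ together with $T$ and $T^\vee$ are reflexive on each side,'' but there is no reason for $T$, $\widetilde{T}$ to be reflexive; they will generally have torsion. What the paper actually does (Proposition~\ref{xxpro3.9}) is first kill the torsion $\tau(T)$ and then \emph{replace} $T$ by its Gabber closure $U$ and $\widetilde{T}$ by $V$, using Lemma~\ref{xxlem3.7} to show this closure is still a bimodule finitely generated on both sides and that replacement preserves the $(d-2)$-isomorphisms. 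This is a non-trivial step requiring Gabber's Maximality Principle and the bimodule compatibility of the section functor; once done, the dimension-two case is finished exactly as you suggest, because $\gldim B_i\leq 2$ (Lemma~\ref{xxlem4.1}) forces reflexive $\Rightarrow$ projective (Corollary~\ref{xxcor2.13}), and a $(d-2)$-isomorphism between reflexives over a $\partial$-$\CM$ algebra is an honest isomorphism (Corollary~\ref{xxcor3.4}). Your phrase ``$S_2$-reflection principle'' gestures at the right phenomenon, but the mechanism is a constructive replacement, not a recovery of $T$ itself.

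For part (2), the Ext-vanishing you want does not follow from spectral sequences alone. The paper's argument (Theorem~\ref{xxthm6.2}, via Theorem~\ref{xxthm6.6}) uses the $E_2$-page analysis of Proposition~\ref{xxpro2.18} and the Ischebeck spectral sequence (Lemma~\ref{xxlem6.3}) only to conclude that $\Ext^1_{B_2}(U,U)$ lies in $\modu_0 B_1^{\op}$ (not in $\modu_1$; since $E^{31}_2$ is $3$-pure in a $\GK=3$ ring, $\Ext^1_{B_2}(U,B_2)$ has $\partial$-dimension zero, and so does its image under $U\otimes_{B_2}(-)$). The genuine vanishing $\Ext^1_{B_2}(U,U)=0$ is then forced by a \emph{depth argument}: since $\dep_{B_1^{\op}}B_1=3$, reflexivity of $U$ on the left gives $\dep_{B_1^{\op}}U\geq 2$ (Corollary~\ref{xxcor5.8}), and feeding this through Lemma~\ref{xxlem5.6} and Lemma~\ref{xxlem5.9} kills the finite-dimensional Ext group. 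This depth step is the noncommutative substitute for localization at height-one primes in Iyama--Reiten, and it is the key new ingredient that your sketch omits. Similarly, the generation condition is established not from the $\cong_1$-identity for $\widetilde{T}\otimes_{B_1}T$, but by applying $\Hom_{B_1^{\op}}(-,U)$ to a length-one projective resolution of $U$ over $B_1^{\op}$ and using the symmetric vanishing $\Ext^1_{B_1^{\op}}(U,U)=0$.
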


The proof of Theorem \ref{xxthm0.6} is given in Section 8. A
version of Theorem \ref{xxthm0.6} holds for other dimension functions
$\partial$ with some extra hypotheses and details are given
in Theorems \ref{xxthm4.2} and \ref{xxthm6.6}. Note that the
hypotheses on $\partial$ (as listed in Theorem \ref{xxthm6.6})
are automatic in the commutative case or the central-finite
case when $\partial=\K$ (see Lemmas \ref{xxlem7.1} and
\ref{xxlem7.2}). Therefore Theorem \ref{xxthm0.6}, or Theorems
\ref{xxthm4.2} and \ref{xxthm6.6} together, generalize
important results of Van den Bergh \cite[Theorem 6.6.3]{VdB2},
Iyama-Reiten \cite[Corollary 8.8]{IR} and Iyama-Wemyss
\cite[Theorem 1.5]{IW1}.

Inspired by the work in \cite{IR,IW1} and Theorem
\ref{xxthm0.6}(1), we have the following question:

\begin{question}
\label{xxque0.7}
Let $B_1$ and $B_2$ be Auslander-regular and $\partial$-$\CM$
algebras with $\gldim B_i=\partial(B_i)=2$ for $i=1,2.$ If
$B_1$ and $B_2$ are derived equivalent, then are they
Morita equivalent?
\end{question}

The paper is organized as follows. Sections 1 and 2 are preliminaries
containing a discussion of dimension functions and homological properties.
A detailed definition and basic properties of a NQR are given in
Section 3. A proof of the main theorem is basically given in Sections
4, 6 and 8, while some technical material is taken care of in Section
5. The connections between NCCRs and NQRs are given in Section 7.
The final section 8 contains examples of NQRs of noncommutative algebras.

\section{Dimension functions and quotient categories}
\label{xxsec1}

Throughout let $\Bbbk$ be a field. All algebras and modules are over
$\Bbbk$. We further assume that {\bf all algebras are noetherian}
in this paper.

We first briefly review background material on dimension functions
and quotient categories of the module categories.

\begin{notation}
\label{xxnot1.1}
For an algebra $A$, we fix the following notations.
\begin{enumerate}
\item[$(1$)]
$\Mod A$ (respectively, $\Mod A^{\op}$): the category of all right
(respectively, left) $A$-modules.
\item[$(2)$]
$\modu A$ (respectively, $\modu A^{\op}$): the full subcategory of
$\Mod A$ (respectively, $\Mod A^{\op}$) consisting of finitely
generated right (respectively, left) $A$-modules.
\item[$(3)$]
$\proj A$ (respectively, $\proj A^{\op}$): the full subcategory of
$\modu A$ (respectively, $\modu A^{\op}$) consisting of finitely generated
projective right (respectively, left) $A$-modules.
\item[$(4)$]
$\refl A$ (respectively, $\refl A^{\op}$): the full subcategory of
$\modu A$ (respectively, $\modu A^{\op}$) consisting of reflexive
right (respectively, left) $A$-modules, see Definition \ref{xxdef2.11}.
\item[$(5)$]
$\add_A(M)$(=$\add M$)  for $M\in\modu A$: the full subcategory
of $\modu A$ consisting of direct summands of finite direct sums
of copies of $M$.
\end{enumerate}
\end{notation}
Usually we work with right modules. We will use the functor
$\Hom_{A}(-,A_A)$ a lot, so let us mention a simple fact below.
A contravariant equivalence between two categories is called a
{\it duality}. Let $A$ be an algebra. Then there is a duality
of categories
$$\Hom_A(-,A_A): \proj A\longrightarrow \proj A^{\op}.$$

Our proof of the main result uses quotient categories
of the module categories defined via a dimension function, so
we first give the following definition, which is a slightly
modification of the definition given in \cite[Section 6.8.4]{MR}.
We also refer to \cite[Section 1]{BHZ1} for a similar definition.

\begin{definition}
\label{xxdef1.2}
A function $\partial:\Mod A\rightarrow \R_{\geq 0}\cup\{\pm\infty\}$ is
called a {\it dimension function} if,
\begin{enumerate}
\item[$(a)$]
$\partial(M)=-\infty$ if and only if $M=0$, and
\item[$(b)$]
for all $A$-modules $M,$
$$\partial(M)\geq\max\{\partial(N),\partial(M/N)\},$$
whenever $N$ is a submodule of $M$.
\end{enumerate}
The $\partial$ is called
an {\it exact dimension function} if, further,
\begin{enumerate}
\item[$(c)$]
for all $A$-modules $M$,
$$\partial(M)=\sup\{\partial(N),\partial(M/N)\},$$
whenever $N$ is any submodule of $M$, and
\item[$(d)$]
for every direct system of submodules of $M$, say $\{M_i\}_{i\in I}$,
\begin{equation}
\label{E1.2.1}\tag{E1.2.1}
\partial (\bigcup_{i\in I} M_i)=\sup \{ \partial(M_i)\mid i\in I\}.
\end{equation}
\end{enumerate}
\end{definition}

Condition (d) in Definition \ref{xxdef1.2} is new. As a consequence
of condition (d), we obtain that, for every $M\in \Mod A$,
\begin{equation}
\label{E1.2.2}\tag{E1.2.2}
\partial(M):=\sup\{ \partial(N) \mid {\text{for all finitely
generated submodules}}\; N\subseteq M\}.
\end{equation}
If we start with an exact dimension function $\partial$ defined on
$\modu A$, then $\partial$ can be extended to $\Mod A$ by using
\eqref{E1.2.2}. In this case, both condition (c) and condition (d)
in Definition \ref{xxdef1.2} are automatic. In fact, natural examples
of dimension functions in this paper are constructed by \eqref{E1.2.2}
from an exact dimension function $\partial$ defined on $\modu A$,
see Remark \ref{xxrem1.4}. One advantage of condition (d) is that,
for any fixed $n$, every right $A$-module $M$ has a maximal submodule
$M'$ with $\partial(M')\leq n$.

Similarly, one can define a dimension function on left modules.
For most of the statements in this paper, we assume the following:

\begin{hypothesis}
\label{xxhyp1.3}
Let $A$ and $B$ be algebras with dimension function $\partial$.
\begin{enumerate}
\item[$(1)$]
$\partial$ is an exact dimension function defined on both right
modules and left modules.
\item[$(2)$]
$\partial(A),\partial(B)\in\N.$
\item[$(3)$]
If an $(A,B)$-bimodule $M$ is finitely generated  as a $B$-module, then
$\partial(_{A}M)\leq\partial(M_B)$. This also holds when
switching $A$ and $B$. In particular, for an $(A,B)$-bimodule
$M$ which is finitely generated  both as a left $A$-module and as a right
$B$-module, we have
\begin{equation}
\label{E1.3.1}\tag{E1.3.1}
\partial(_{A}M)=\partial(M_B).
\end{equation}
A dimension function $\partial$ is called {\it symmetric} if
\eqref{E1.3.1} holds, which is identical to \cite[Definition 2.20]{YZ2}.
\end{enumerate}
\end{hypothesis}

Note that symmetry condition \eqref{E1.3.1} resembles 
``symmetric derived torsion'', in the sense of \cite[Section 9]{VY}.

Unless otherwise stated, an $(A,B)$-bimodule means finitely generated
on both sides. Recall that $A$ is called {\it central-finite} if it is
a finitely generated module over its center.

\begin{remark}
\label{xxrem1.4}
Two standard choices of $\partial$ are the Gelfand-Kirillov
dimension, denoted by $\GK$, see \cite[Ch.8]{MR} and
\cite{KL}, and the Krull dimension, denoted by $\K$, see
\cite[Ch.6]{MR}. As a convention, we define the Krull
dimension of an infinitely generated module via \eqref{E1.2.2}.
Note that, while $\K$ is defined for every
noetherian ring $A$, it is not known whether it is always
symmetric. If $A$ is central-finite, then $\K$ is symmetric
\cite[Corollary 6.4.13]{MR}. On the other hand, $\GK$ is
always symmetric \cite[Corollary 5.4]{KL}, though it could
be infinite for a nice noetherian $\Bbbk$-algebra. For a
central-finite algebra $A$ with affine center, $\K$ coincides
with $\GK;$ this is an easy consequence of the equality of
the two dimensions for affine commutative algebras
\cite[Theorem 4.5]{KL}.
\end{remark}

We need to recall some definitions and notations introduced
in \cite{BHZ1}. From now on, we fix an exact dimension
function, say $\partial$. We use $n$ for a nonnegative integer.
Let $\Mod_n A$ denote the full subcategory of $\Mod A$
consisting of right $A$-modules $M$ with $\partial(M)\leq n$.
Since $\partial$ is exact, $\Mod_n A$ is a Serre subcategory
of $\Mod A$. Hence it makes sense to define the quotient categories:
$$\QMod_nA:=\frac{\Mod A}{\Mod_nA}, \quad \text{and} \quad
\qmod_nA:=\frac{\modu A}{\modu_nA},$$
which can be seen as a generalized noncommutative scheme (see \cite{AZ}).
Note that there are some symmetries between $\qmod_nA$ and
$\qmod_nA^{\op}$ when $A$ admits a nice dualizing complex, see 
\cite[Theorem 2.15]{YZ2}. We denote the natural and exact 
projection functor by
\begin{equation}
\label{E1.4.1}\tag{E1.4.1}
\pi:\Mod A\longrightarrow\QMod_n A.
\end{equation}
For $M\in\Mod A,$ we write $\mathcal{M}$ for the object $\pi(M)$
in $\QMod_n A$. The hom-set in the quotient category is defined by
\begin{equation}
\label{E1.4.2}\tag{E1.4.2}
\Hom_{\QMod_n A}(\mathcal{M},\mathcal{N})=
\lim_{\longrightarrow}\Hom_A(M',N')
\end{equation}
for $M$, $N$ $\in\Mod A$, where $M'$ is a submodule of $M$ such
that $\partial(M/M')\leq n$, $N'=N/T$ for some submodule $T\subseteq N$
with $\partial(T)\leq n$, and where the direct limit runs over
all the pairs $(M',N')$ with these properties.

\begin{definition}
\label{xxdef1.5}
Let $n\geq 0$. Let $A$ and $D$ be two algebras.
\begin{enumerate}
\item[$(1)$]
Two right $A$-modules $X,Y$ are called
{\it $n$-isomorphic}, denoted by $X\cong_n Y$, if there
exist a right $A$-module $P$ and morphisms $f: X\to P$
and $g: Y\to P$ such that both the kernel and cokernel of
$f$ and $g$ are in $\Mod_n A$.
\item[$(2)$]
Two $(D,A)$-bimodules $X,Y$ are called
{\it $n$-isomorphic}, denoted by $X\cong_n Y$, if there
exist a $(D,A)$-bimodule $P$ and bimodule morphisms $f: X\to P$
and $g: Y\to P$ such that both the kernel and cokernel of
$f$ and $g$ are in $\Mod_n A$ when viewed as right $A$-modules.
\end{enumerate}
\end{definition}

Definition \ref{xxdef1.5}(2) is useful when we consider
bimodules. Most of right module statements regarding
$n$-isomorphisms have bimodule analogues, for which we might
omit the proofs if these are clear.

\begin{remark}
\label{xxrem1.6}
\begin{enumerate}
\item[(1)]
Since we usually consider finitely generated modules,
it turns out that we are mostly talking about $n$-isomorphisms
in $\modu A$. In this case, we can take $P\in \modu A$
in the above definition.
\item[(2)]
If $X$ is a submodule of $Y$ in $\modu A$ and $Y/X\in\modu_n A$,
then clearly $X\cong_n Y.$
\item[(3)]
If
$$0 \rightarrow K\rightarrow M\rightarrow N\rightarrow C \rightarrow 0$$
is an exact sequence in $\modu A$ with $K,C\in\modu_n A$, then
$M\cong_n N$ in $\modu A.$
\end{enumerate}
\end{remark}

The following lemma is easy and the proof is omitted.

\begin{lemma}
\label{xxlem1.7}
Two right $A$-modules $X$ and $Y$ are $n$-isomorphic in $\modu A$
if and only if their images ${\mathcal X}$ and ${\mathcal Y}$
in $\qmod_n A$ are isomorphic.
\end{lemma}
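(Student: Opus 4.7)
The plan is to prove both directions of the equivalence, with the forward direction being essentially formal and the reverse direction requiring an explicit construction of the common target $P$.

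For the forward direction, suppose $X \cong_n Y$ witnessed by maps $f : X \to P$ and $g : Y \to P$ whose kernels and cokernels lie in $\modu_n A$. The key fact I would invoke is that the quotient functor $\pi : \modu A \to \qmod_n A$ is the universal exact functor annihilating $\modu_n A$, so it sends any morphism whose kernel and cokernel lie in the Serre subcategory to an isomorphism. Hence $\pi(f)$ and $\pi(g)$ are both isomorphisms, and $\pi(g)^{-1}\circ\pi(f) : \mathcal{X} \to \mathcal{Y}$ is the desired isomorphism in $\qmod_n A$.

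For the reverse direction, I would start by unpacking the formula \eqref{E1.4.2} for an isomorphism $\varphi : \mathcal{X} \to \mathcal{Y}$: it is represented by an honest morphism $f : X' \to Y/T$ of $A$-modules, for some submodule $X' \subseteq X$ with $X/X' \in \modu_n A$ and some $T \subseteq Y$ with $T \in \modu_n A$. Since $\pi(f) = \varphi$ is already an isomorphism in $\qmod_n A$ after the (trivial) identifications $\mathcal{X} \cong \mathcal{X}'$ and $\mathcal{Y} \cong \mathcal{Y}/\mathcal{T}$, the standard Serre-quotient criterion tells us that $f$ itself already has $\ker(f), \coker(f) \in \modu_n A$. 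I would then form the pushout
\[
\begin{array}{ccc}
X' & \xrightarrow{\;\;\iota\;\;} & X \\
\scriptstyle{f}\downarrow & & \downarrow\scriptstyle{\alpha} \\
Y/T & \xrightarrow{\;\;\beta\;\;} & P
\end{array}
\]
and take this $P$ as the common target, with the maps $\alpha : X \to P$ and $\beta\circ \pi_T : Y \to P$, where $\pi_T : Y \twoheadrightarrow Y/T$ is the quotient.

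The remaining work is a diagram chase to verify the kernel/cokernel bounds. Using the explicit description $P = (X \oplus Y/T)/\{(\iota(x'), -f(x')) : x' \in X'\}$, one checks that $\ker(\alpha) \cong \ker(f) \in \modu_n A$ and $\coker(\alpha) \cong \coker(f) \in \modu_n A$; that $\beta$ is injective (since $\iota$ is, by the pushout-preserves-monos property in an abelian category) with $\coker(\beta) \cong X/X' \in \modu_n A$; and consequently that $\beta\circ\pi_T$ has kernel $T$ and cokernel $X/X'$, both in $\modu_n A$. This verifies $X \cong_n Y$. The main obstacle — really the only non-formal content — is to produce a single ambient module $P$ from the abstract isomorphism in the quotient, and the pushout is the natural way to amalgamate the partial information $f : X' \to Y/T$ into maps defined on all of $X$ and all of $Y$.
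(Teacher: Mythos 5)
Your proof is correct, and since the authors declare the lemma ``easy'' and omit the proof entirely, there is no paper argument to compare against; your write-up supplies the details they left implicit.

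Both directions are handled properly. The forward direction is indeed a one-liner from the universal property of the Serre quotient: a morphism with kernel and cokernel in the Serre subcategory becomes an isomorphism under $\pi$, so $\pi(f)$, $\pi(g)$ are isomorphisms and the composite does the job. For the reverse direction, the subtle point you correctly identify is that an isomorphism in $\qmod_n A$ is represented, via \eqref{E1.4.2}, only by a morphism $f\colon X'\to Y/T$ defined on a ``large'' submodule of $X$ into a ``small'' quotient of $Y$ --- not by maps defined on all of $X$ and $Y$ --- so one must genuinely construct the common target $P$. The pushout of $X\hookleftarrow X'\xrightarrow{f} Y/T$ is the natural amalgamation, and your diagram chase is right: $\ker\alpha\cong\ker f$, $\coker\alpha\cong\coker f$, $\beta$ is monic (pushout along a mono in an abelian category), $\coker\beta\cong X/X'$, and $\ker(\beta\circ\pi_T)=T$, $\coker(\beta\circ\pi_T)\cong X/X'$, all of which land in $\modu_n A$. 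Note also that $P$ is a quotient of $X\oplus Y/T$, hence finitely generated, so $P\in\modu A$ as required by Remark \ref{xxrem1.6}(1). One could alternatively take $P$ to be a suitable finitely generated submodule of $\omega\pi(X)$ using the section functor from \eqref{E1.10.1}--\eqref{E1.10.3}, but the pushout route is more elementary and avoids any appeal to the existence of $\omega$.
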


%zhang: proof is omitted.
%\begin{proof}
%One direction is obvious. For another direction,
%if $X\cong_sY,$ then there exist $A$-submodules
%$X'\subseteq X,$ $Y'\subseteq Y$ with
%$X/X', Y/Y'\in\modu_sA$ such that $X'\cong Y'$ as
%right $A$-modules. Since $X/X'$ and $Y/Y'$ are in
%$\modu_sA,$ we have $\pi(X)\cong\pi(X')$ and
%$\pi(Y)\cong\pi(Y').$ Moreover, $X'\cong Y',$ so
%$\pi(X)\cong\pi(Y)$ in $\qmod_sA.$
%\end{proof}

\begin{definition}\cite[Definition 1.2]{BHZ1}
\label{xxdef1.8}
Let $A$ and $B$ be algebras and $\partial$ be an exact dimension
function that is defined on $A$-modules and  $B$-modules. Let
$n$ and $i$ be nonnegative integers. Suppose that $_{A}M_B$ is a
bimodule.
\begin{enumerate}
\item[(1)]
We say $\partial$ satisfies $\gamma_{n,i}(M)^l$ if for any
$N\in\modu_n A$, $\Tor_j^A(N,M)\in\modu_n B$ for all $0\leq j\leq i$.
\item[(2)]
We say $\partial$ satisfies $\gamma_{n,i}(M)^r$ if for any
$N\in\modu_n B^{\op}$, $\Tor_j^B(M,N)\in\modu_n A^{\op}$ for all
$0\leq j\leq i$.
\item[(3)]
We say $\partial$ satisfies $\gamma_{n,i}(M)$ if it satisfies
$\gamma_{n,i}(M)^l$ and $\gamma_{n,i}(M)^r.$
\item[(4)]
We say $\partial$ satisfies $\gamma_{n,i}(A,B)^l$ if it satisfies
$\gamma_{n,i}(M)^l$ for all $_{A}M_B$ that are finitely generated
on both sides.
\item[(5)]
We say $\partial$ satisfies $\gamma_{n,i}(A,B)^r$ if it satisfies
$\gamma_{n,i}(M)^r$ for all $_{A}M_B$ that are finitely generated
on both sides.
\item[(6)]
We say $\partial$ satisfies $\gamma_{n,i}(A,B)$ if it satisfies
$\gamma_{n,i}(M)$ for all $_{A}M_B$ that are finitely generated
on both sides.
\end{enumerate}
\end{definition}

Note that the conditions listed in the above definition are related 
to some conditions in terms of symmetric (derived) torsion, 
generalizing the $\chi$-condition of \cite{AZ}, see 
\cite[Section 16.5]{Y2}.

%\begin{definition}
%Let $A$ and $B$ be algebras and $\partial$ be an exact dimension
%function that is defined on  $A$-modules and  $B$-modules.
%We say $\partial$ satisfies $\beta_0$ if for any
%$(A,B)$-bimodule $M$ with $M\in\modu_0B,$ $M$ is finitely generated  as a left $A$-module.
%\end{definition}

In the most parts of this paper, we will be particularly interested
in the $\gamma_{n,1}$ property.

\begin{lemma}\cite[Lemma 1.3]{BHZ1}
\label{xxlem1.9}
Let $A$ and $B$ be algebras such that $\partial$ is an exact
dimension function on $A$-modules and  $B$-modules. Assume
that $\partial$ satisfies $\gamma_{n,1}(M)^l$ for a bimodule
$_{A}M_B.$ Then the functor $-\otimes_AM$ induces a functor
$$-\otimes_{\mathcal{A}}\mathcal{M}:\QMod_nA\longrightarrow\QMod_nB.$$
Since $M$ is finitely generated on both sides, this functor restricts to:
$$-\otimes_{\mathcal{A}}\mathcal{M}:\qmod_nA\longrightarrow\qmod_nB.$$
\end{lemma}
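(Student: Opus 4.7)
The plan is to construct $-\otimes_{\mathcal{A}}\mathcal{M}$ explicitly by specifying its action on objects and on the hom-sets described in \eqref{E1.4.2}. On objects, since the projection $\pi$ is the identity on the underlying class, define $\mathcal{N}\otimes_{\mathcal{A}}\mathcal{M}:=\pi(N\otimes_A M)$ for any representative $N\in\Mod A$ of $\mathcal{N}$. The substance of the proof is then to see that this construction respects morphisms in the quotient categories, which reduces to two comparisons: first, if $N_0\subseteq N$ with $N/N_0\in\Mod_n A$, then $N_0\otimes_A M\to N\otimes_A M$ has kernel and cokernel in $\Mod_n B$; second, if $T\subseteq N'$ lies in $\Mod_n A$, then the surjection $N'\otimes_A M\twoheadrightarrow (N'/T)\otimes_A M$ has kernel in $\Mod_n B$.

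For the first comparison, the short exact sequence $0\to N_0\to N\to N/N_0\to 0$ gives the four-term exact sequence
$$\Tor_1^A(N/N_0,M)\to N_0\otimes_A M\to N\otimes_A M\to (N/N_0)\otimes_A M\to 0,$$
whose outer two terms lie in $\Mod_n B$ by $\gamma_{n,1}(M)^l$, so the kernel and cokernel of the middle arrow lie in $\Mod_n B$. The second comparison uses only $\gamma_{n,0}(M)^l$ together with the right-exactness of $-\otimes_A M$. With both comparisons in hand, a morphism $\mathcal{N}\to\mathcal{N}'$ in $\QMod_n A$ represented by $f:N_0\to N'/T$ as in \eqref{E1.4.2} induces, after tensoring with $M$ and composing with the isomorphisms in $\QMod_n B$ just established, a morphism $\mathcal{N}\otimes\mathcal{M}\to\mathcal{N}'\otimes\mathcal{M}$. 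Independence of the choice of $(N_0,T)$ within the directed system, compatibility with composition, and the identity axiom all follow by routine naturality diagrams for the bifunctor $\otimes_A$. The restriction to $\qmod_n A\to\qmod_n B$ is immediate from the hypothesis that $M$ is finitely generated as a right $B$-module, since then $N\otimes_A M\in\modu B$ whenever $N\in\modu A$.

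The main obstacle I anticipate is a technical extension of hypotheses: $\gamma_{n,1}(M)^l$ is stated in Definition \ref{xxdef1.8} only for $N\in\modu_n A$, but the comparison above needs its conclusion for an arbitrary $N/N_0\in\Mod_n A$. To bridge this gap one writes $N/N_0$ as the filtered union of its finitely generated submodules; each such submodule still has $\partial$-dimension $\leq n$ by condition \eqref{E1.2.1}, and both $\otimes_A M$ and $\Tor_1^A(-,M)$ commute with filtered colimits, so the result stays in $\Mod_n B$ thanks to closure of $\Mod_n B$ under directed unions (again \eqref{E1.2.1}). Beyond that extension the argument is essentially bookkeeping, with the one point of real care being the verification that the induced map on morphisms is independent of the representative pair $(N_0,T)$ chosen in the colimit \eqref{E1.4.2}, which is done by passing to a common refinement.
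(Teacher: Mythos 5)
The paper does not prove this lemma; it simply cites \cite[Lemma 1.3]{BHZ1}, so there is no internal proof to compare against. Your argument is correct and is the natural one: the two comparisons you isolate (kernel and cokernel of $N_0\otimes_A M\to N\otimes_A M$ lie in $\Mod_n B$ via the four-term $\Tor$ sequence and $\gamma_{n,1}(M)^l$; kernel of $N'\otimes_A M\twoheadrightarrow(N'/T)\otimes_A M$ lies in $\Mod_n B$ via right-exactness and $\gamma_{n,0}$, which is contained in $\gamma_{n,1}$) are precisely what is needed to send the spans in the colimit \eqref{E1.4.2} to spans whose legs become isomorphisms in $\QMod_n B$, and your extension from $\modu_n A$ to $\Mod_n A$ via filtered colimits and \eqref{E1.2.1} is sound. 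One small misattribution: the fact that a finitely generated submodule of $N/N_0$ has $\partial$-dimension $\le n$ follows from Definition~\ref{xxdef1.2}(b) (monotonicity under submodules), not from \eqref{E1.2.1}; \eqref{E1.2.1} enters, as you correctly use it afterward, to conclude that the colimit $\Tor_1^A(N/N_0,M)=\varinjlim\Tor_1^A(L_i,M)$, realized as the directed union of the images of $\Tor_1^A(L_i,M)$, stays in $\Mod_n B$.
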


%\begin{lemma}\label{tensor condition}
%Let $A,B$ and $C$ be algebras and $\partial$ be an exact
%dimension function with respect to them. If
%$_A\!M_B\in\modu_nB,$ then $M\otimes_BX\in\modu_nC$ for any $_B\!X_C.$
%\end{lemma}
%\begin{proof}
%Since $X\in\modu B^{\op},$ there is a surjective map
%$B^{(m)}\twoheadrightarrow X$ in $\modu B^{\op}$ for
%some integer $m\geq0.$ As a consequence,
%$M\otimes_BB^{(m)}\twoheadrightarrow M\otimes_BX$
%is surjective in $\modu A^{\op}$ and whence
%$$\partial((M\otimes_BX)_C)=\partial(_A\!(M\otimes_BX))
%\leq\partial(_A\!(M\otimes_BB^{(m)}))=\partial(_A\!M)
%=\partial(M_B)\leq n,$$ as desired.Sect. 4
%\end{proof}

\begin{lemma}
\label{xxlem1.10}
Retain the hypotheses in Lemma \ref{xxlem1.9}.
Suppose that $X$ and $Y$ are in $\modu A$ such that
$X\cong_n Y$. Then $X\otimes_A M\cong_n Y\otimes_A M$ in $\modu B$.
\end{lemma}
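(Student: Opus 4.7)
The plan is to deduce Lemma 1.10 almost directly from the two preceding lemmas; no new homological input is needed beyond $\gamma_{n,1}(M)^l$. Specifically, Lemma 1.9 gives a well-defined functor
\[
-\otimes_{\mathcal{A}}\mathcal{M}:\qmod_nA\longrightarrow\qmod_nB
\]
that sends the class $\mathcal{X}:=\pi(X)$ to $\pi(X\otimes_AM)$, and Lemma 1.7 says $X\cong_n Y$ in $\modu A$ if and only if $\mathcal{X}\cong\mathcal{Y}$ in $\qmod_nA$. So from $X\cong_nY$ I get $\mathcal{X}\cong\mathcal{Y}$, apply the functor to get $\pi(X\otimes_AM)\cong\pi(Y\otimes_AM)$ in $\qmod_nB$, and invoke Lemma 1.7 on the $B$-side to conclude $X\otimes_AM\cong_nY\otimes_AM$.

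If one prefers a hands-on check without citing Lemma 1.9, the strategy is to unpack the definition of $n$-isomorphism and chase Tor sequences. Pick $P\in\modu A$ together with $f:X\to P$ and $g:Y\to P$ whose kernels and cokernels lie in $\modu_nA$. Split $f$ into $0\to\ker f\to X\to\im f\to0$ and $0\to\im f\to P\to\coker f\to0$, then apply $-\otimes_AM$ and look at the long exact Tor sequences. The hypothesis $\gamma_{n,1}(M)^l$ forces $\Tor_0^A(\ker f,M)$, $\Tor_0^A(\coker f,M)$, and $\Tor_1^A(\coker f,M)$ to be in $\modu_nB$; from the two long exact sequences one reads off that the kernel and cokernel of $f\otimes M:X\otimes_AM\to P\otimes_AM$ are obtained as extensions/quotients of these Tor modules and therefore also lie in $\modu_nB$ (using exactness of $\partial$). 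The same works for $g\otimes M$, so the triple $(P\otimes_AM,f\otimes M,g\otimes M)$ witnesses $X\otimes_AM\cong_nY\otimes_AM$ in $\modu B$; finiteness of $P\otimes_AM$ over $B$ follows from $P\in\modu A$ together with $M$ being finitely generated as a right $B$-module.

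There is no real obstacle: the only thing to verify carefully in the hands-on version is that the kernel of a composition $\psi\circ\phi$ of two maps with $\modu_n$-kernel/cokernel is itself in $\modu_n$, which is immediate from the short exact sequence $0\to\ker\phi\to\ker(\psi\circ\phi)\to\ker\psi\cap\im\phi\to0$ and the fact that $\modu_nB$ is a Serre subcategory because $\partial$ is exact. I would present the first (functorial) proof as the main one, since it makes clear that the statement is really just a restatement of Lemma 1.9 combined with Lemma 1.7 and contains no content beyond them.
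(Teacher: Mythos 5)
Your first (functorial) proof is exactly what the paper does: the paper's one-line proof says "The assertion follows from Lemmas 1.7 and 1.9 or the proof of [Lemma 1.3]{BHZ1}," and your hands-on Tor-chase is precisely the content of that cited proof. Both of your routes are correct and match the paper's intent, so there is nothing to add.
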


\begin{proof} The assertion follows from Lemmas \ref{xxlem1.7}
and \ref{xxlem1.9} or the proof of \cite[Lemma 1.3]{BHZ1}.
\end{proof}

We will also use the right adjoint functor of $\pi$ defined in
\eqref{E1.4.1}. Since $\Mod_n A$ is a Serre subcategory
(or a {\it dense} subcategory in the sense of \cite[Sect.4.3]{Po}),
every right $A$-module has a largest submodule in $\Mod_n A$
(see also \eqref{E1.2.1}). Note that $\Mod A$ is {\it locally small}
(in the sense of \cite[p. 5]{Po}) and has enough injective objects.
By a well-known classical category theory result
\cite[Theorem 4.4.5 or Proposition 4.5.2]{Po} (which is in a
different mathematical language unfortunately), there
is a {\it section functor}, denoted by $\omega$ (we are
following the notation of \cite[p. 234]{AZ}) such that there
is a natural isomorphism
\begin{equation}
\label{E1.10.1}\tag{E1.10.1}
\Hom_{\Mod A}(N,\omega ({\mathcal M}))\cong
\Hom_{\QMod_n A}(\pi(N), {\mathcal M}),
\end{equation}
for all $M\in \Mod A$ and ${\mathcal M}\in \QMod_n A$.
Given a module $M$, let $C_M$ denote the filtering
category of maps $M\to M'$ whose kernel and cokernel
are in $\Mod_n A$. Then
\begin{equation}
\label{E1.10.2}\tag{E1.10.2}
\omega \pi(M)=\lim_{(M\to M')\in C_M} M'.
\end{equation}
By \cite[Proposition 4.4.3]{Po}, the unit of the adjunction
$Id\to \omega\pi$ induces the natural map
\begin{equation}
\label{E1.10.3}\tag{E1.10.3}
u_M: M\to \omega\pi (M),
\end{equation}
which has kernel and cokernel in $\Mod_n A$. Further, $u_M$
is an isomorphism if and only if $M$ is {\it closed}
in the sense of \cite[P. 176]{Po}. By
\cite[Lemma 4.4.6(2)]{Po}, the image of $u_M$, which is
canonically isomorphic to $M/M'$ where $M'$ is the
largest subobject of $M$ in $\Mod_n A$, is an essential
subobject in $\omega\pi (M)$. The assertions in the
following lemma are known to experts.

\begin{lemma}
\label{xxlem1.11}
Let $A$ and $B$ be two algebras and $n$ be a nonnegative integer.
Let $M$ be a right $A$-module and $M'$ be the largest submodule
of $M$ such that $\partial(M')\leq n$.
\begin{enumerate}
\item[(1)]
$\omega\pi(M)$ is naturally isomorphic to the largest submodule
of the injective hull of $M/M'$ containing $M/M'$ such
that $X/(M/M')$ is in $\Mod_n A$.
\item[(2)]
If $M$ is a $(B,A)$-bimodule, then
$\omega\pi(M)$ is a $(B,A)$-bimodule and
$u_M$ is a bimodule morphism.
%\item[(3)]
%Let $N$ be another $(B,A)$-module. Then
%$M\cong_n N$ as $(B,A)$-bimodules if and only if
%there is a $(B,A)$-bimodule $P$ and bimodule morphisms
%$f: M\to P$ and $g: N\to P$ such that both
%kernel and cokernel of $f$ and $g$ are in $\Mod_n A$.
\end{enumerate}
\end{lemma}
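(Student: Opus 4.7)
My plan is to prove part (1) by showing that the explicitly described module $X$ in the statement satisfies the universal property characterizing $\omega\pi(M)$, and to deduce part (2) from functoriality. First, by maximality of $M'$ together with the exactness of $\partial$, the quotient $N := M/M'$ has no nonzero submodule in $\Mod_n A$: any such submodule would pull back to a submodule of $M$ strictly larger than $M'$ and still in $\Mod_n A$. Applying $\pi$ to the two short exact sequences $0 \to M' \to M \to N \to 0$ and $0 \to N \to X \to X/N \to 0$ (both with outer terms in $\Mod_n A$) yields $\pi(M) \cong \pi(N) \cong \pi(X)$, so it suffices to prove $X$ is \emph{closed}, i.e.\ that $u_X$ is an isomorphism.

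For closedness, I first verify that $X$ itself has no nonzero $\Mod_n A$-submodule: any such $S \subseteq X \subseteq E(N)$ would satisfy $S \cap N \neq 0$ by essentiality of $N$ in $E(N)$, and $S \cap N \subseteq N$ would be a nonzero $\Mod_n A$-submodule of $N$, contradicting the previous paragraph. By \eqref{E1.10.3} and \cite[Lemma 4.4.6(2)]{Po}, $u_X$ is therefore injective, its image $X$ is essential in $\omega\pi(X)$, and the cokernel $\omega\pi(X)/X$ lies in $\Mod_n A$. Since $N$ is essential in $X$, it is essential in $\omega\pi(X)$; injectivity of $E(N)$ then extends the inclusion $N \hookrightarrow E(N)$ to a map $\omega\pi(X) \to E(N)$ which is injective by essentiality of $N$. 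The quotient $\omega\pi(X)/N$ is an extension of $X/N$ by $\omega\pi(X)/X$, both in $\Mod_n A$, hence lies in $\Mod_n A$. Maximality of $X$ inside $E(N)$ then forces $\omega\pi(X) \subseteq X$, so $\omega\pi(X) = X$. Naturality in $M$ follows from functoriality of $\omega\pi$ together with the intrinsic characterization of $X$ inside $E(N)$.

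For part (2), a compatible left $B$-action on $M$ is encoded by a ring homomorphism $B \to \End_A(M)$; applying $\omega\pi$ (functorial on $\modu A$) gives a ring homomorphism $B \to \End_A(\omega\pi(M))$, which is the required left $B$-structure. Naturality of the unit $u: \mathrm{Id} \to \omega\pi$ then makes $u_M$ a $(B,A)$-bimodule map.

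The main obstacle is the essentiality bookkeeping in part (1): converting the abstract description of $\omega\pi(X)$ via the colimit \eqref{E1.10.2} into the concrete picture as a submodule of $E(N)$. This conversion hinges entirely on \cite[Lemma 4.4.6(2)]{Po} (essentiality of the image of the unit), so the heavy lifting has been done in Popescu's book; once we have that, the maximality characterization of $X$ finishes the argument cleanly and forces the desired identification.
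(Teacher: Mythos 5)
Your proof is correct and takes essentially the same route as the paper: reduce to the case $M'=0$, embed $\omega\pi(X)$ into the injective hull $E(N)$ via \cite[Lemma~4.4.6(2)]{Po}, and invoke maximality of $X$. Where the paper then runs a small commutative-diagram argument with the natural transformation $\mathrm{Id}\to\omega\pi$ to conclude that the inclusion $\omega\pi(M)\hookrightarrow C(M)$ is an isomorphism, you instead read off the equality directly from the maximality of $X$; these are interchangeable, and your route is arguably a little more elementary.

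One point worth tightening: you extend the inclusion $N\hookrightarrow E(N)$ along $N\hookrightarrow\omega\pi(X)$ to get $\phi\colon\omega\pi(X)\to E(N)$, but what you actually need at the end — to pass from ``$\phi(\omega\pi(X))\subseteq X$'' to ``$u_X$ is onto'' — is that $\phi$ restricts to the given inclusion on all of $X$, not merely on $N$. Two clean fixes: either extend $X\hookrightarrow E(N)$ along $u_X\colon X\hookrightarrow\omega\pi(X)$ from the outset; or observe that since $N$ has no nonzero submodule in $\Mod_n A$ and is essential in $E(N)$, the module $E(N)$ itself has no nonzero $\Mod_n A$-submodule, so any map $X\to E(N)$ vanishing on $N$ factors through $X/N\in\Mod_n A$ and must be zero, forcing $\phi|_X$ to agree with the inclusion. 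Either way the gap closes with one sentence. Part (2) is verbatim the paper's argument.
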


\begin{proof}
(1) Without loss of generality, we can assume that
$M$ does not contain a nonzero submodule of $\partial$-dimension
$\leq n$, namely, $M'=0$. Let $C(M)$ be the largest submodule $X\supseteq M$
of the injective hull of $M$ such that $X/M$ is in $\Mod_n A$.
By \cite[Lemma 4.4.6(2)]{Po}, we have canonical injective maps
$$M\xrightarrow{u_M} \omega\pi(M)\xrightarrow{f} C(M)$$
such that the cokernel of $f$ is in $\Mod_n A$. In
particular, $\pi(f)$ is an isomorphism. Applying the
natural transformation $Id\to \omega\pi$, we have a
commutative diagram
$$\begin{CD}
\omega\pi(M) @> f >> C(M)\\
@V u_{\omega\pi(M)} VV  @VV u_{C(M)}V\\
\omega\pi\omega\pi(M) @>> \omega\pi(f) >\omega\pi(C(M)).
\end{CD}
$$
Since $\pi\omega\cong Id$, $u_{\omega\pi(M)}$ is an isomorphism.
Since $\pi(f)$ is an isomorphism, $\omega\pi(f)$ is an isomorphism.
Note that both $f$ and $u_{C(M)}$ are injective. Hence
$f$ and $u_{C(M)}$ are isomorphisms.

(2) Since $M$ is a $(B,A)$-bimodule, there is an algebra map
$B\to \End_{\Mod A}(M_A)$. Applying the functor $\omega\pi$, we
obtain an algebra map
$$B\to \End_{\Mod A}(M_A)\to \End_{\Mod A}(\omega\pi(M)),$$
which means that $\omega\pi(M)$ is a $(B,A)$-bimodule.
Since the unit of the adjunction $Id\to \omega\pi$ is a natural
transformation, $u_M$ is a bimodule morphism.
\end{proof}

\section {Preliminaries on homological properties}
\label{xxsec2}

In this section, we review some homological properties that are needed
in the definition of a noncommutative quasi-resolution.

\begin{definition}\cite[Definitions 1.2, 2.1, 2.4]{Le}
\label{xxdef2.1}
Let $A$ be an algebra and $M$ a right $A$-module.
\begin{enumerate}
\item[(1)]
The {\it grade number} of $M$ is defined to be
$$j_A(M):=\inf\{i|\Ext_A^i(M,A)\neq0\}\in \N\cup\{+\infty\}.$$
If no confusion can arise, we write $j(M)$ for $j_A(M)$. Note that
$j_A(0)=+\infty$.
\item[(2)]
A nonzero $A$-module $M$ is called {\it $n$-pure} (or just
{\it pure}) if $j_A(N)=n$ for all nonzero finitely generated
submodules $N$ of $M$.
\item[(3)]
We say $M$ satisfies the {\it Auslander condition} if for any $q\geq0,$
$j_A(N)\geq q$ for all left $A$-submodules $N$ of $\Ext_A^q(M,A)$.
\item[(4)]
We say $A$ is {\it Auslander-Gorenstein} (respectively,
{\it Auslander regular}) of dimension $n$ if
$\injdim A_A=\injdim {_AA}=n<\infty$ (respectively, $\gldim A=n<\infty$)
and every finitely generated left and right $A$-module satisfies
the Auslander condition.
\end{enumerate}
\end{definition}

\begin{proposition}\cite[Proposition 1.8]{Bj}
\label{xxpro2.2}
Let $A$ be Auslander-Gorenstein. If
$$0\longrightarrow M'\longrightarrow M\longrightarrow M''\longrightarrow0$$
is an exact sequence of finitely generated $A$-modules, then
$$j(M)=\inf\{j(M'),j(M'')\}.$$
\end{proposition}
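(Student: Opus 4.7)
The plan is to extract both inequalities $j(M)\ge\inf\{j(M'),j(M'')\}$ and $j(M)\le\inf\{j(M'),j(M'')\}$ from the long exact sequence of $\Ext^\ast_A(-,A)$ attached to the given short exact sequence. The first direction is immediate: for $i<\min\{j(M'),j(M'')\}$ the middle term $\Ext^i_A(M,A)$ is sandwiched between two vanishing groups, so it vanishes too. Note that neither the Auslander condition nor the finiteness of $\injdim A$ is used here.

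For the reverse inequality set $s=\inf\{j(M'),j(M'')\}$ and, without loss of generality, take $s=j(M')$. When $j(M'')=s$ the preceding term $\Ext^{s-1}_A(M',A)$ is zero by definition of grade, so the long exact sequence embeds the nonzero $\Ext^s_A(M'',A)$ into $\Ext^s_A(M,A)$ and gives $j(M)\le s$ at once. When $j(M'')>s$, I would argue by contradiction, supposing $\Ext^s_A(M,A)=0$; the fragment
\[
0=\Ext^s_A(M'',A)\to \Ext^s_A(M,A)\to \Ext^s_A(M',A)\xrightarrow{\delta}\Ext^{s+1}_A(M'',A)
\]
then exhibits $\Ext^s_A(M',A)\ne 0$ as a submodule of $\Ext^{s+1}_A(M'',A)$. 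If $j(M'')\ge s+2$ the ambient group vanishes, yielding an immediate contradiction, so the only substantive subcase is $j(M'')=s+1$.

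In that last subcase the Auslander condition applied to $M''$ forces every nonzero submodule of $\Ext^{j(M'')}_A(M'',A)$ to have grade at least $j(M'')=s+1$; hence $j(\Ext^s_A(M',A))\ge s+1$. To finish I would invoke the \emph{purity} identity $j(\Ext^{j(M')}_A(M',A))=j(M')=s$, which is a standard consequence of the Auslander condition combined with the finiteness of $\injdim A$ (it is typically proved via the convergent double-Ext spectral sequence $E_2^{p,q}=\Ext^p_A(\Ext^q_A(M',A),A)\Rightarrow M'$, the Auslander condition killing the region $p<q$ of the $E_2$-page). Together these give $s\ge s+1$, the sought contradiction, and so $\Ext^s_A(M,A)\ne 0$ and $j(M)\le s$. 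The main obstacle is precisely this last subcase: the purity identity is the nontrivial homological input supplied by the full Auslander-Gorenstein hypothesis, and without it the naive long exact sequence chase cannot close the inequality.
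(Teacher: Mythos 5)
The paper does not prove this proposition; it is quoted verbatim from Bj\"ork \cite[Proposition 1.8]{Bj}, so there is no in-house argument to compare against. Assessed on its own terms, your outline is correct and follows the standard route: the inequality $j(M)\ge\inf\{j(M'),j(M'')\}$ from the long exact sequence, and the reverse inequality by combining the Auslander condition with the fact that $\Ext^{j(M')}_A(M',A)$ has grade exactly $j(M')$ (the ``corner'' piece of the purity statement, which is Proposition \ref{xxpro2.14} in the paper).

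Two small remarks. First, your ``without loss of generality take $s=j(M')$'' is not literally a symmetry reduction, since $M'$ and $M''$ sit in asymmetric positions in the long exact sequence of $\Ext^\ast_A(-,A)$. The omitted case $s=j(M'')<j(M')$ does need a word, but it is the easy one: there $\Ext^{s-1}_A(M',A)=\Ext^s_A(M',A)=0$, so the fragment $\Ext^{s-1}_A(M',A)\to\Ext^s_A(M'',A)\to\Ext^s_A(M,A)\to\Ext^s_A(M',A)$ gives $\Ext^s_A(M,A)\cong\Ext^s_A(M'',A)\ne 0$ outright. Second, once you have the injection $\delta\colon\Ext^s_A(M',A)\hookrightarrow\Ext^{s+1}_A(M'',A)$, the Auslander condition applied to $M''$ with $q=s+1$ already yields $j\bigl(\Ext^s_A(M',A)\bigr)\ge s+1$ with no further case analysis; the split into $j(M'')\ge s+2$ versus $j(M'')=s+1$ is harmless but unnecessary, since the contradiction with the purity identity $j\bigl(\Ext^s_A(M',A)\bigr)=s$ is the same in either case. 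You are right that the purity identity is the genuine homological input here---it is exactly where finiteness of $\injdim A$ enters through the biduality spectral sequence \eqref{E2.13.1}---and invoking it at this point is what one should expect Bj\"ork's own argument to do as well.
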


\begin{definition}\cite[Definition 0.4]{ASZ1}
\label{xxdef2.3}
Let $A$ be an algebra with a dimension function $\partial$.
We say $A$ is {\it $\partial$-Cohen-Macaulay} (or,
{\it $\partial$-$\CM$}
in short) if $\partial(A)=d\in\N,$ and
$$j(M)+\partial(M)=\partial(A)$$
for every finitely generated nonzero left (or right) $A$-module $M$.
If $A$ is $\GK$-Cohen-Macaulay, namely, if $\partial=\GK$, we
just say it is {\it Cohen-Macaulay} or {\it $\CM$}.
\end{definition}

There are other modified definitions of noncommutative 
CM algebras, in particular, using the rigid Auslander dualizing 
complex over the algebra $A$ \cite{YZ2}. Here we are using a 
more classical approach in Definition \ref{xxdef2.3}. 

\begin{remark}
\label{xxrem2.4}
\begin{enumerate}
\item[(1)]
If $A$ is $\partial$-$\CM$ with $\partial(A)\in\N$, then
$j_A(M)<\infty$ and $\partial(M)\in\N$ for all nonzero
finitely generated $A$-modules $M$.
\item[(2)]
If $A$ is a $\partial$-$\CM$ algebra, then $\partial$ is
an exact dimension \cite[p. 3]{ASZ2}. In particular,
$\GK$ is exact on finitely generated modules over $\CM$ algebras.
\item[(3)]
Let $A$ be Auslander-Gorenstein. The {\it canonical dimension}
of a finitely generated right (or left) $A$-module $M$
is defined to be
\begin{equation}
\label{E2.4.1}\tag{E2.4.1}
\partial (M)=\injdim A-j(M),
\end{equation}
which was introduced in \cite[Definition 2.9]{YZ2}, 
in the more general setting of Auslander dualizing complexes.
For infinitely generated modules, see \eqref{E1.2.2}.
By \cite[Proposition 4.5]{Le}, the canonical dimension is an
exact (but not necessarily symmetric) dimension function.
By \eqref{E2.4.1}, $A$ is trivially $\partial$-CM.
\end{enumerate}
\end{remark}

\begin{definition} \cite[Definition 1.12]{Bj}
\label{xxdef2.5}
Let $M$ be a finitely generated pure right $A$-module, see
Definition \ref{xxdef2.1}(2). A {\it tame and pure} extension
of $M$ is a finitely generated right $A$-module
$N$ such that $M\subseteq N$, $N$ is pure and
$j(N/M)\geq j(M)+2$. Note that a tame and pure extension
is always an essential extension.
\end{definition}

The following result of Bj{\"o}rk is called
{\it Gabber's Maximality Principle}, see
\cite[Theorem 1.14]{Bj}.

\begin{theorem}%[Gabber's Maximality Principle]
\cite[Theorem 1.14]{Bj}
\label{xxthm2.6}
Let $A$ be an Auslander-Gorenstein algebra. Suppose that $M$
is a finitely generated $n$-pure $A$-module. Let $N$ be an
$A$-module containing $M$ such that every nonzero finitely
generated submodule of $N$ is $n$-pure. Then $N$ contains
a unique largest tame and pure extension of $M$.
\end{theorem}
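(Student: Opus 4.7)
My plan is to realize the desired largest extension as the sum of all finitely generated tame and pure extensions of $M$ inside $N$, show this sum is itself finitely generated by invoking the Auslander--Gorenstein bidual, and then verify it lies in the family via Proposition \ref{xxpro2.2}.

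Let $\mathcal{F}$ denote the family of tame and pure extensions of $M$ contained in $N$. First I observe $M \in \mathcal{F}$ (since $j(0) = +\infty$), and if $L \in \mathcal{F}$ and $M \subseteq L' \subseteq L$ is finitely generated, Proposition \ref{xxpro2.2} applied to $0 \to L'/M \to L/M \to L/L' \to 0$ gives $j(L'/M) \geq j(L/M) \geq n+2$; since $L' \subseteq N$ is $n$-pure by hypothesis, $L' \in \mathcal{F}$. Next, given $L_1, L_2 \in \mathcal{F}$, the sum $L_1+L_2$ is finitely generated and lies in $N$ (hence is $n$-pure), and satisfies $j((L_1+L_2)/M) \geq \min\{j(L_1/M), j(L_2/M)\} \geq n+2$ by two applications of Proposition \ref{xxpro2.2} to the canonical surjection $(L_1/M) \oplus (L_2/M) \twoheadrightarrow (L_1+L_2)/M$. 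Thus $\mathcal{F}$ is closed under finite sums.

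Set $L^* := \sum_{L \in \mathcal{F}} L \subseteq N$. By closure under sums and the first observation, any finitely generated submodule of $L^*$ containing $M$ already lies in $\mathcal{F}$. Moreover, any nonzero $x \in L^*$ lies in some element of $\mathcal{F}$, which is an essential extension of $M$ (Definition \ref{xxdef2.5}), so $Ax \cap M \neq 0$; hence $M$ is essential in $L^*$, and $L^*$ embeds over $M$ into the injective hull $E(M)$.

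The main obstacle is finite generation of $L^*$, for then $L^* \in \mathcal{F}$ and is evidently the unique largest element. For this I would invoke the Auslander--Gorenstein bidual $M^\vee := \Ext^n_A(\Ext^n_A(M,A),A)$, which is finitely generated (since $A$ is noetherian). A standard argument using the Auslander condition shows $M^\vee$ is pure of grade $n$ and fits in a natural embedding $M \hookrightarrow M^\vee$ whose cokernel has grade $\geq n+2$; indeed $M^\vee$ is the unique maximal tame and pure extension of $M$ inside $E(M)$, and coincides with the section $\omega\pi(M)$ taken with respect to the canonical dimension of Remark \ref{xxrem2.4}(3) and the Serre subcategory $\Mod_{d-n-2} A$ (cf.\ Lemma \ref{xxlem1.11}, with $d = \injdim A$). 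By uniqueness of the extension $L \hookrightarrow E(M)$ over $M$ for each $L \in \mathcal{F}$, these embeddings are compatible and assemble into an injection $L^* \hookrightarrow M^\vee$; noetherianity of $A$ then forces $L^*$ finitely generated. Uniqueness of the maximum is immediate since $L^*$ contains every element of $\mathcal{F}$.
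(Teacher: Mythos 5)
The paper cites Theorem \ref{xxthm2.6} to Bj\"ork without giving a proof, so there is no internal argument to compare against; but the device at the heart of your proposal is exactly the one the paper uses to prove the companion Lemma \ref{xxlem2.9}: bound the directed union of tame and pure extensions inside $N$ by the Ext-bidual $M^{**}:=\Ext^n_{A^{\op}}(\Ext^n_A(M,A),A)$, which is finitely generated, and let noetherianity finish. Your preliminary steps are sound: Proposition \ref{xxpro2.2} gives closure of $\mathcal{F}$ under intermediate submodules and under finite sums (with the hypothesis on $N$ supplying purity of a sum), and essentiality of $L^*$ over $M$ follows because every member of $\mathcal{F}$ is essential over $M$.

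The step that needs tightening is how you realize $L^*\hookrightarrow M^{**}$. As written you invoke that ``$M^\vee$ is the unique maximal tame and pure extension of $M$ inside $E(M)$, and coincides with $\omega\pi(M)$''; but the identification of $\omega\pi(M)$ with $M^{**}$ and the maximality statement are Lemma \ref{xxlem2.7} and Proposition \ref{xxpro2.10}, which the paper \emph{derives from} Theorem \ref{xxthm2.6}, so in the paper's logical order this is circular. (Also note your $M^\vee$ clashes with the paper's convention that $(-)^\vee$ denotes $\Hom_A(-,A)$; the paper writes $M^{**}$ for the Ext-bidual.) To avoid the circle, use directly the two independent inputs underlying those results: first, for any finitely generated $n$-pure $L$ the spectral sequence \eqref{E2.13.1}--\eqref{E2.13.2} gives the canonical injection $L\hookrightarrow L^{**}$ of Lemma \ref{xxlem2.17}, and $L^{**}$ is $n$-pure by Proposition \ref{xxpro2.14}; second, if $M\subseteq L$ with $j(L/M)\geq n+2$, then applying $\Ext_A(-,A)$ to $0\to M\to L\to L/M\to 0$ kills $\Ext^n_A(L/M,A)$ and $\Ext^{n+1}_A(L/M,A)$, so $\Ext^n_A(L,A)\cong\Ext^n_A(M,A)$ and hence $L^{**}\cong M^{**}$; this is \cite[Lemma 1.15]{Bj}, which the paper cites independently in Lemma \ref{xxlem2.9}. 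The resulting injections $L\hookrightarrow L^{**}\cong M^{**}$ all extend $M\hookrightarrow M^{**}$, and your uniqueness argument (a map $L\to M^{**}$ vanishing on $M$ factors through $L/M$, whose image would be a nonzero submodule of grade $\geq n+2$ in the $n$-pure module $M^{**}$, impossible) shows they are compatible. Then $L^*\hookrightarrow M^{**}$ is finitely generated by noetherianity, and $L^*\in\mathcal{F}$ is the desired unique maximum.
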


We do not assume that $N$ is finitely generated in the above
theorem. On the other hand, by definition, a tame and pure
extension of $M$ is finitely generated. We will explain the
Gabber's Maximality Principle in some details in the following
two lemmas. Firstly, we recall some functors. Let $\partial$ be the
canonical dimension defined in Remark \ref{xxrem2.4}(3) when $M$
is finitely generated and extended to $\Mod A$ by \eqref{E1.2.2}.
We fix a non-negative integer $n$ and let $d=\injdim A$. If $M$
is $n$-pure, then $\partial(M)=d-n$ by \eqref{E2.4.1}. In the
next two lemmas, $M$ will be an $n$-pure right $A$-module. Let
$$\pi: \Mod A\to \QMod_{d-n-2} A$$
and
$$\omega: \QMod_{d-n-2} A\to \Mod A,$$
see \eqref{E1.4.1} and \eqref{E1.10.1}.

The following lemma is a special case of \cite[Theorem 2.19]{YZ2}.
For the convenience of readers, we give detailed proof. 

\begin{lemma}
\label{xxlem2.7}
Let $A$ be an Auslander-Gorenstein algebra. Suppose that $M$ is
a finitely generated $n$-pure $A$-module. Then there is an
$n$-pure $A$-module $\widetilde{M}$, unique up to unique
isomorphism, such that the following hold.
\begin{enumerate}
\item[$(1)$]
$\widetilde{M}$ is a tame and pure extension of $M$, namely,
there is a given injective morphism $g_{M}: M\to \widetilde{M}$,
\item[$(2)$]
If $N$ is a tame and pure extension of $M$, then $g_M$ factors
uniquely through the inclusion map $M\to N$.
\end{enumerate}
Further, $\widetilde{M}$ is naturally isomorphic to both
$\omega\pi(M)$ and $\Ext^n_{A^{\op}}(\Ext^n_A(M,A),A)$ and
$g_M$ agrees with $u_M$ in \eqref{E1.10.3} when $\widetilde{M}$
is identified with $\omega\pi(M)$.
\end{lemma}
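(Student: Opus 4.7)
The plan is to construct $\widetilde{M}$ as $\omega\pi(M)$ for $\pi:\Mod A\to\QMod_{d-n-2}A$ with $d=\injdim A$, verify directly that this satisfies the stated universal property of a largest tame and pure extension, and finally reconcile this description with the double-$\Ext$ formula via the Auslander condition.

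Set $g_M:=u_M$ from \eqref{E1.10.3}. Since $M$ is $n$-pure, it contains no nonzero submodule of canonical dimension $\leq d-n-2$, so Lemma \ref{xxlem1.11}(1) makes $u_M$ injective and identifies $\omega\pi(M)$ with the largest submodule $X$ of the injective hull $E(M)$ containing $M$ for which $j(X/M)\geq n+2$. To establish $n$-purity of $\omega\pi(M)$, I would pick a nonzero finitely generated $L\subseteq\omega\pi(M)$, note $L\cap M\neq 0$ by essentiality of $M$ in $E(M)$, observe that $L/(L\cap M)\hookrightarrow\omega\pi(M)/M$ either vanishes or has grade $\geq n+2$, and then apply Proposition \ref{xxpro2.2} to conclude $j(L)=n$.

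Finite generation of $\omega\pi(M)$ would come from Gabber's Maximality Principle (Theorem \ref{xxthm2.6}) applied inside $\omega\pi(M)$ itself, producing a largest tame and pure extension $M^{\ast}\subseteq\omega\pi(M)$. For any $x\in\omega\pi(M)$, the module $M^{\ast}+Ax$ is finitely generated and $n$-pure, and its quotient by $M$ embeds in $\omega\pi(M)/M$, so has grade $\geq n+2$; thus $M^{\ast}+Ax$ is tame and pure, forcing $x\in M^{\ast}$ by maximality, and hence $\omega\pi(M)=M^{\ast}$. Condition (1) of the lemma is now immediate, and for (2) any tame and pure extension $N\supseteq M$ is essential (Definition \ref{xxdef2.5}), embeds in $E(M)$, and satisfies $j(N/M)\geq n+2$, so lies inside $\omega\pi(M)$ by maximality; uniqueness of $\widetilde{M}$ up to unique isomorphism is the standard universal-property argument.

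For the identification with $\Ext^n_{A^{\op}}(\Ext^n_A(M,A),A)$, I would exploit the Auslander condition to show that the canonical biduality map $\phi_M:M\to\Ext^n_{A^{\op}}(\Ext^n_A(M,A),A)$ is injective (its kernel is zero by $n$-purity of $M$) with cokernel of grade $\geq n+2$, and that every nonzero submodule of the target has grade $\geq n$. Invariance of $\Ext^n_A(-,A)$ under tame and pure extensions (the long exact sequence forces the relevant $\Ext$ of the quotient to vanish) then extends $\phi_M$ to a canonical map $\omega\pi(M)\to\Ext^n_{A^{\op}}(\Ext^n_A(M,A),A)$; combined with the realization of the double-$\Ext$ itself as a tame and pure extension of $M$ and part (2), this yields the inverse map and the identification $g_M\leftrightarrow\phi_M$. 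The main technical obstacle is establishing full $n$-purity (rather than merely the Auslander lower bound grade $\geq n$) of the double-$\Ext$ module; this is where the heavier Auslander-Gorenstein spectral sequence machinery must be invoked.
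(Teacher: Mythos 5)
Your construction and verification track the paper's own proof nearly step for step: you take $\widetilde{M}=\omega\pi(M)$, identify it inside $E(M)$ via Lemma \ref{xxlem1.11}(1), establish $n$-purity from essentiality of $M$ together with Proposition \ref{xxpro2.2}, deduce finite generation by comparing $\omega\pi(M)$ with the Gabber-maximal tame and pure extension of Theorem \ref{xxthm2.6}, verify (2) by factoring through $E(M)$ and using maximality, and finish by exhibiting $\Ext^n_{A^{\op}}(\Ext^n_A(M,A),A)$ as a maximal tame and pure extension. Two points deserve tightening. First, the \emph{uniqueness} of the factoring map in (2) is not delivered by the generic universal-property bookkeeping you invoke; it needs the short computation the paper supplies: if $f_1,f_2:N\to\widetilde{M}$ both extend $g_M$, then $f_1-f_2$ kills $M$, so its image is a quotient of $N/M$, hence has $\partial$-dimension $\leq d-n-2<d-n$; since $\widetilde{M}$ is $n$-pure it contains no nonzero submodule of that dimension, so $f_1=f_2$. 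Second, the $n$-purity of the double-$\Ext$ module that you flag as the main technical obstacle is exactly Proposition \ref{xxpro2.14} (quoting \cite[Theorem 2.4]{Le}), combined with injectivity of the biduality map; once that is in hand one only needs \cite[Proposition 1.13]{Bj} for the maximality claim, and part (2) then yields the isomorphism directly with no need to build inverse maps by hand, which is how the paper concludes. A minor notational slip: since $M$ is a right $A$-module you should write $M^{\ast}+xA$ rather than $M^{\ast}+Ax$.
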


\begin{proof} Let $\partial$ be the canonical dimension
defined by \eqref{E2.4.1} and $d=\injdim A$.

(1) Let $E(M)$ be the injective hull of $M$. By
Lemma \ref{xxlem1.11}(1), $\omega\pi(M)$ is a largest
submodule of $E(M)$ containing $M$ such that $\omega\pi(M)/M$
has $\partial$-dimension at most $d-n-2$. So every nonzero
finitely generated submodule $N(\supseteq M)$ of $\omega\pi(M)$
is $n$-pure and $j(N/M)\geq n+2$. Thus $N$ is a tame and
pure extension of $M$. By Theorem \ref{xxthm2.6}, $\omega\pi(M)$
contains a largest (and maximal) tame and pure extension, which
must be $\omega\pi(M)$ itself. So $\omega\pi(M)$ satisfies (1).
We now define $\widetilde{M}=\omega\pi(M)$ and $g_M$ to be
the inclusion map.

(2) Let $N$ be a tame and pure extension of $M$. Then $N$ is an
essential extension of $M$. Let $i: M\to N$ be the inclusion
map. Since  $E(M)$ is injective, there is an injective
map $f: N\to E(M)$ such that $f\circ i: M\to E(M)$ is the inclusion
map. Since $N$ is a tame and pure extension of $M$, it is easy to
see that the image of $f$ is inside $\omega\pi(M)$. Thus
we have a map $f: N\to \widetilde{M}:=\omega\pi(M)$ such that
$g_{M}=f\circ i$. Finally we prove the uniqueness of this
factorization. Suppose there are two maps $f_1, f_2$ such that
$g_{M}=f_1\circ i=f_2\circ i$. Then $(f_1-f_2)\circ i=0$ or
the $(f_1-f_2)(M)=0$. Then the image of $f_1-f_2$ is a
quotient module of $N/M$, which has $\partial$-dimension strictly
less than $d-n$. Since $\widetilde{M}$ is $n$-pure, $f_1-f_2$
must be zero, namely, $f_1=f_2$. This shows that uniqueness.

Part (2) can be considered as a universal property.
The uniqueness of $\widetilde{M}$ follows from part (2).

For the last assertion, we let $M^{\ast\ast}:=
\Ext^n_{A^{\op}}(\Ext^n_A(M,A),A)$.
By \cite[Lemma 2.2]{Le} and \cite[Proposition 1.13]{Bj},
$M^{\ast\ast}$ is a
tame and pure extension and there is no other tame and pure
extensions properly containing $M^{\ast\ast}$. Therefore
$M^{\ast\ast}\cong \omega\pi(M)$ by part (2).
\end{proof}

\begin{definition}
\label{xxdef2.8}
Let $A$ be an Auslander-Gorenstein algebra. Suppose that $M$ is
a finitely generated $n$-pure right $A$-module. The map
$g_M: M\to \widetilde{M}$ (or simply the module $\widetilde{M}$)
in Lemma \ref{xxlem2.7}, is called a {\it Gabber closure} of $M$.
By Lemma \ref{xxlem2.7}, a Gabber closure of $M$ always exists
and is unique up to a unique isomorphism. Therefore, it is no
confusion to call it {\it the Gabber closure} of $M$. In this
case, we write the Gabber closure as $g_M: M\to G_A(M)$ (or
simply $G_A(M)$).
\end{definition}

Suppose $\partial$ is an arbitrary dimension function.
When $A$ is a $\partial$-$\CM$ algebra, $\partial$ equals to
the canonical dimension up to a uniform shift. Hence
\eqref{E2.4.1} implies that the condition
$$j(\widetilde{M}/M)\geq j(M)+2$$
is equivalent to
$$\partial(\widetilde{M}/M)\leq\partial(M)-2.$$

\begin{lemma}
\label{xxlem2.9}
Let $A$ be an Auslander-Gorenstein algebra. Suppose that
$M$ is a finitely generated $n$-pure right $A$-module. Let
$N$ be an $n$-pure $A$-module such that
\begin{enumerate}
\item[$(a)$]
$N$ is an essential extension of $M$, and
\item[$(b)$]
$j(N'/M)\geq j(M)+2$ for all finitely generated $A$-submodule
$N'$ of $N$ that contains $M$.
\end{enumerate}
Then $N$ is a finitely generated $A$-module.
\end{lemma}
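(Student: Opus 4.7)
The plan is to embed $N$ into the Gabber closure $\widetilde{M}=G_A(M)$, which by Lemma \ref{xxlem2.7} is finitely generated, so noetherianness of $A$ forces $N$ to be finitely generated as well.

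Concretely, I would first observe that every finitely generated submodule $N'\subseteq N$ containing $M$ is itself a tame and pure extension of $M$: since $N$ is $n$-pure, so is $N'$ (purity is inherited by submodules); $M\subseteq N'$; and $j(N'/M)\ge j(M)+2$ by hypothesis (b). Hence Lemma \ref{xxlem2.7}(2) supplies a unique map $f_{N'}\colon N'\to \widetilde{M}$ factoring $g_M$ through the inclusion $M\hookrightarrow N'$.

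The next step is to glue these maps. Because $M$ is finitely generated and $N=\bigcup_{x\in N}(M+xA)$, the poset of finitely generated submodules $N'\subseteq N$ with $M\subseteq N'$ is filtered and covers $N$. For $N'\subseteq N''$ in this poset, the restriction $f_{N''}|_{N'}$ also factors $g_M$, so by the uniqueness clause in Lemma \ref{xxlem2.7}(2) it equals $f_{N'}$. These compatible maps therefore assemble into a single $A$-module homomorphism $f\colon N\to\widetilde{M}$ extending $g_M$. Since $f|_M=g_M$ is injective and $M$ is essential in $N$ by hypothesis (a), any nonzero submodule of $\ker f$ would meet $M$ nontrivially, a contradiction. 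Thus $f$ is injective.

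Finally, by the last assertion of Lemma \ref{xxlem2.7}, $\widetilde{M}\cong \Ext^n_{A^{\op}}(\Ext^n_A(M,A),A)$, and since $A$ is noetherian and $M$ is finitely generated, this double-Ext module is finitely generated over $A$. As $A$ is noetherian, the submodule $f(N)\cong N$ of $\widetilde{M}$ is also finitely generated, completing the proof. The only delicate point I anticipate is the compatibility of the maps $f_{N'}$ on a filtered system of submodules, but this is exactly what the uniqueness clause of Lemma \ref{xxlem2.7}(2) is designed to handle, so no real obstacle arises.
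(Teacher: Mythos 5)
Your proof is correct. Step 1 (each finitely generated $N'\supseteq M$ inside $N$ is a tame and pure extension) is right: $n$-purity passes to submodules, $N'$ is finitely generated, and hypothesis (b) gives the grade condition. The gluing in step 3 is justified precisely by the uniqueness clause of Lemma~\ref{xxlem2.7}(2), as you say, and injectivity of the glued map via essentiality is clean. The finite generation of $\widetilde{M}\cong\Ext^n_{A^{\op}}(\Ext^n_A(M,A),A)$ together with noetherianness of $A$ then finishes it.

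The paper's proof rests on the same essential fact---that the Gabber closure $M^{**}$ is finitely generated---but packages the argument differently: it assumes $N$ is not finitely generated, picks a strictly ascending chain $M\subsetneq M_1\subsetneq M_2\subsetneq\cdots$ of finitely generated submodules, invokes Bj\"ork's Lemma~1.15 to conclude $M_i^{**}=M^{**}$ for every $i$, and then contradicts noetherianness since the whole chain sits inside the finitely generated $M^{**}$. You instead construct a single injection $N\hookrightarrow\widetilde{M}$ by gluing the universal maps of Lemma~\ref{xxlem2.7}(2) over the filtered poset of finitely generated submodules; this is arguably more self-contained (it only uses the paper's own Lemma~\ref{xxlem2.7} rather than citing a further result from Bj\"ork) and makes explicit the compatibility check that the paper's proof passes over implicitly when it writes $M\subseteq M_1\subseteq\cdots\subseteq M^{**}$. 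Both routes are sound; yours trades the reference to Bj\"ork's Lemma~1.15 for a short verification of the cone condition.
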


\begin{proof}
In this proof,  let $M^{**}$ denote $\Ext^n_{A^{\op}}(\Ext^n_A(M,A),A)$.
If $N$ is not finitely generated, then there is an ascending
chain of finitely generated $A$-submodules
$$M\subsetneq M_1\subsetneq M_2\subsetneq\cdots\subsetneq N$$
such that $M_i\in\modu A$ are $n$-pure and
$j(M_i/M)\geq j(M)+2$ for every $i$. By \cite[Lemma 1.15]{Bj},
$M_i^{**}= M^{**}$. Moreover, since every
$M_i$ are $n$-pure, we have
$$M\subseteq M_1\subseteq M_2\subseteq\cdots\subseteq\cdots
\subseteq M_i^{**}=M^{**}$$
for every $i$. Since $M^{**}$ is finitely generated by Lemma \ref{xxlem2.7},
the ascending  chain stabilizes, a contradiction.
\end{proof}

We collect some facts and re-statements concerning the Gabber closure.

\begin{proposition}
\label{xxpro2.10}
Let $A$ be an Auslander-Gorenstein algebra. Suppose that
$M$ is a finitely generated $n$-pure $A$-module.
\begin{enumerate}
\item[(1)]
The Gabber closure of $M$, denoted by $g_M: M\to G_A(M)$ as in
Definition \ref{xxdef2.8}, exists and is unique up to a unique
isomorphism.
\item[(2)]
$g_M$ agrees with $u_M$ in \eqref{E1.10.3} for specific choices
of $\pi$ and $\omega$ given before Lemma \ref{xxlem2.7}.
\item[(3)]
$G_A(M)$ is a tame and pure extension of $M$. In particular,
$G_A(M)$ is finitely generated over $A$.
\item[(4)]\cite[Proposition 1.13]{Bj}
$G_A(M)$ does not have any proper tame and pure extension.
\item[(5)]
Let $N$ be a tame and pure extension of $M$. If $N$ does not
have any proper tame and pure extension, then $N\cong G_A(M)$.
\item[(6)]
If $M$ is a $(B,A)$-bimodule, then $G_A(M)$ is a $(B,A)$-bimodule
and $g_M$ is a morphism of $(B,A)$-bimodules.
\end{enumerate}
\end{proposition}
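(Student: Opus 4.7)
The plan is to derive all six items from results already assembled in the excerpt, with essentially all substantive work lying in part (5).

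Parts (1) and (2) are immediate repackagings of Lemma~\ref{xxlem2.7}: that lemma constructs $G_A(M):=\omega\pi(M)$, establishes the universal factorization property characterizing the Gabber closure (which yields uniqueness up to unique isomorphism in the standard way), and shows that $g_M$ agrees with $u_M$ under the identification with $\omega\pi(M)$. For (3), tameness and purity of $G_A(M)$ over $M$ is Lemma~\ref{xxlem2.7}(1), while finite generation follows from the identification $G_A(M)\cong \Ext^n_{A^{\op}}(\Ext^n_A(M,A),A)$ established in Lemma~\ref{xxlem2.7} (alternatively, by direct application of Lemma~\ref{xxlem2.9}). Part (4) is \cite[Proposition 1.13]{Bj} applied through this same identification with $M^{\ast\ast}$. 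Part (6) combines the identification $G_A(M)=\omega\pi(M)$ with Lemma~\ref{xxlem1.11}(2), which already transports both the bimodule structure and the morphism $u_M=g_M$.

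The substantive step is (5). Given a tame and pure extension $N$ of $M$ admitting no proper tame and pure extension, I would invoke the universal property in Lemma~\ref{xxlem2.7}(2) to produce a unique morphism $f: N\to G_A(M)$ with $f\circ i_N = g_M$, where $i_N: M\hookrightarrow N$ is the inclusion. For injectivity, note that a tame and pure extension is automatically essential (Definition~\ref{xxdef2.5}), so $N$ is essential over $M$; since $f|_M=g_M$ is injective, $\ker f\cap M=0$, forcing $\ker f=0$. For surjectivity, apply Proposition~\ref{xxpro2.2} to the exact sequence
\[0\longrightarrow f(N)/M\longrightarrow G_A(M)/M\longrightarrow G_A(M)/f(N)\longrightarrow 0\]
to conclude $j(G_A(M)/f(N))\geq j(G_A(M)/M)\geq n+2$; since $G_A(M)$ is $n$-pure, it is then itself a tame and pure extension of $f(N)\cong N$, and the hypothesis on $N$ forces $f(N)=G_A(M)$.

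I do not anticipate a serious obstacle. The only delicate point is the grade inequality in (5), which relies on Proposition~\ref{xxpro2.2} and hence on the Auslander-Gorenstein hypothesis; everything else is formal bookkeeping around the universal property of $G_A(M)$ and the identifications already assembled above.
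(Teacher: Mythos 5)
Your proposal is correct and follows essentially the same route as the paper: parts (1)--(4) and (6) are, as you observe, immediate from Lemma~\ref{xxlem2.7}, \cite[Proposition 1.13]{Bj}, and Lemma~\ref{xxlem1.11}(2), and for part (5) the paper likewise invokes Lemma~\ref{xxlem2.7}(2) to realize $G_A(M)$ as a tame and pure extension of $N$ and then appeals to the hypothesis that $N$ has no proper tame and pure extension. The only difference is presentational: the paper compresses part (5) into two sentences, while you spell out the injectivity of $f$ (from essentiality) and the grade estimate on $G_A(M)/f(N)$ (from Proposition~\ref{xxpro2.2}) that make ``$G_A(M)$ is a tame and pure extension of $N$'' precise --- useful detail, but not a different argument.
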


\begin{proof}
(1,2,3) See Lemma \ref{xxlem2.7}.

(4) Since $G_A(M)$ is identified with
$\Ext^n_{A^{\op}}(\Ext^n_A(M,A),A)$, the assertion is exactly
\cite[Proposition 1.13]{Bj}.

(5) By Lemma \ref{xxlem2.7}(2), $G_A(M)$ is a tame and pure
extension of $N$. Since $N$ does not have a proper
tame and pure extension, $N=G_A(M)$.

(6) Since $G_A(M)$ can be identified with $\omega\pi(M)$,
the assertion follows from Lemma \ref{xxlem1.11}(2).
\end{proof}

For every right $A$-module $M$, let
\begin{equation}
\label{E2.10.1}\tag{E2.10.1}
M^{\vee}=\Hom_A(M,A)
\end{equation}
and
\begin{equation}
\label{E2.10.2}\tag{E2.10.2}
M^{\vee\vee}=\Hom_{A^{\op}}(\Hom_A(M,A),A).
\end{equation}
When $n=0$ as in the proof of Lemma \ref{xxlem2.9},
$M^{\ast\ast}=M^{\vee\vee}$. By adjunction, there
is a natural map $M\longrightarrow M^{\vee\vee}:=
\Hom_{A^{\op}}(\Hom_A(M,A),A)$.

\begin{definition}
\label{xxdef2.11}
Let $A$ be an algebra.
A finitely generated right $A$-module $M$ is called
{\it reflexive} if the natural morphism
$M\longrightarrow M^{\vee\vee}$ is an isomorphism.
A reflexive left module is defined similarly.
\end{definition}

It's obvious that when $A$ is Auslander-Gorenstein,
an $A$-module $M$ of maximal dimension is reflexive 
if and only if $M$ is its own Gabber closure. 
Note that the definition of a reflexive module given in
\cite{IR, IW1, IW2} is relative to a given base commutative ring.
It is clear that every projective module is reflexive, but the
converse is not true. The following lemma and corollary are well-known.

\begin{lemma}
\label{xxlem2.12}
Let $A$ be an algebra of global dimension $d$.
If $M\in\modu A,$ then $\pd_{A^{\op}}M^{\vee}\leq \max\{0,d-2\}$,
where $(-)^{\vee}=\Hom_A(-,A).$
\end{lemma}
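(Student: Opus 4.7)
The plan is to exploit a projective presentation of $M$ together with the Auslander transpose to convert the problem into a syzygy computation. Since $M$ is finitely generated, choose a projective presentation $P_1 \xrightarrow{\varphi} P_0 \to M \to 0$ with $P_0, P_1 \in \proj A$. Applying the exact duality $(-)^{\vee}=\Hom_A(-,A)\colon \proj A \to \proj A^{\op}$ mentioned just after Notation \ref{xxnot1.1}, together with the left exactness of $\Hom_A(-,A)$, yields a four-term exact sequence of finitely generated left $A$-modules
\begin{equation*}
0 \longrightarrow M^{\vee} \longrightarrow P_0^{\vee} \xrightarrow{\;\varphi^{\vee}\;} P_1^{\vee} \longrightarrow T \longrightarrow 0,
\end{equation*}
where $T := \coker(\varphi^{\vee})$ is the Auslander transpose of $M$. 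Because $\gldim A = d$ and $T \in \modu A^{\op}$, we have $\pd_{A^{\op}} T \leq d$.

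Next I would split this into the two short exact sequences
\begin{equation*}
0 \to I \to P_1^{\vee} \to T \to 0, \qquad 0 \to M^{\vee} \to P_0^{\vee} \to I \to 0,
\end{equation*}
where $I := \im(\varphi^{\vee})$. Applying the standard syzygy-shifting principle to the first (with $P_1^{\vee}$ projective) gives $\pd_{A^{\op}} I \leq \max\{0,\pd_{A^{\op}} T - 1\} \leq \max\{0, d-1\}$, where the max absorbs the case in which $T$ itself is projective and the sequence splits. Applying the same principle to the second short exact sequence (with $P_0^{\vee}$ projective) gives $\pd_{A^{\op}} M^{\vee} \leq \max\{0,\pd_{A^{\op}} I - 1\} \leq \max\{0, d-2\}$, which is exactly the desired bound.

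There is no real obstacle here: the argument is essentially a double syzygy-shift along the transpose sequence, and the $\max\{0,\cdot\}$ in the final bound is precisely what absorbs the low-dimensional edge cases (when $d\leq 1$, both $T$ and $I$ turn out to be projective and $M^{\vee}$ is projective itself). The only point requiring a moment's care is the syzygy-shift formula $\pd(\ker) = \pd(\coker)-1$, which in each step must be stated with the $\max\{0,\cdot\}$ adjustment in order to cover the split cases uniformly, but this is routine.
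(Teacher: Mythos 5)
Your proof is correct and essentially identical to the paper's: both dualize a projective presentation $P_1\to P_0\to M\to 0$, obtain the four-term exact sequence $0\to M^{\vee}\to P_0^{\vee}\to P_1^{\vee}\to T\to 0$ with $T$ the Auslander transpose (the paper calls it $E$), bound $\pd_{A^{\op}}T\leq d$ by global dimension, and then apply a double syzygy shift across the two projectives $P_0^{\vee}$, $P_1^{\vee}$. You merely make the two-step dimension-shifting explicit, whereas the paper compresses it into one sentence.
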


\begin{proof}
Suppose that $\cdots\longrightarrow P_1\longrightarrow P_0
\longrightarrow M\longrightarrow0$ is a projective resolution of $M$
such that each $P_i$ is finitely generated. Applying $(-)^{\vee}$
to the above exact sequence, there is an exact
sequence of left $A$-modules
$$0\longrightarrow M^{\vee}\longrightarrow P_0^{\vee}
\longrightarrow P_1^{\vee}\longrightarrow E\longrightarrow0,$$
where $E=\coker(P_0^{\vee}\rightarrow P_1^{\vee})$. Since
$\pd_{A^{\op}}E\leq d$ and $P_0^{\vee},P_1^{\vee}\in\proj A^{\op}$,
we have $\pd_{A^{\op}} M^{\vee}\leq \max\{0,d-2\}$.
\end{proof}

\begin{corollary}
\label{xxcor2.13}
Let $A$ be an algebra of global dimension $d$. If $ M\in\refl A$,
then $\pd_A M\leq\max\{0, d-2\}$. In particular, if $d\leq 2$,
then any reflexive $A$-module is projective.
\end{corollary}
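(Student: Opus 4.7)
The plan is to apply Lemma \ref{xxlem2.12} twice (once for $A$ and once for $A^{\op}$) to deduce the projective dimension bound directly from the reflexivity of $M$.

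First I would note that the statement of Lemma \ref{xxlem2.12} is left-right symmetric: since $A$ has global dimension $d$ if and only if $A^{\op}$ does, the same argument shows that for any $N \in \modu A^{\op}$ one has $\pd_{A} \Hom_{A^{\op}}(N, A) \leq \max\{0, d-2\}$. Next, given a reflexive right $A$-module $M$, I would apply Lemma \ref{xxlem2.12} to $M$ itself to conclude that $M^{\vee} = \Hom_A(M,A)$ lies in $\modu A^{\op}$ (a priori $M^{\vee}$ is a submodule of $P_0^{\vee}$, hence finitely generated since $A$ is noetherian) and has finite projective dimension. Then I would apply the opposite-side version of the lemma to $N := M^{\vee}$ to obtain
\[
\pd_{A} \Hom_{A^{\op}}(M^{\vee}, A) \leq \max\{0, d-2\}.
\]
Since $M$ is reflexive, $M \cong M^{\vee\vee} = \Hom_{A^{\op}}(M^{\vee}, A)$, and the bound on the projective dimension transfers to $M$ itself, giving $\pd_A M \leq \max\{0, d-2\}$.

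For the final clause, if $d \leq 2$ then $\max\{0, d-2\} = 0$, so $\pd_A M = 0$, i.e.\ $M$ is projective.

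There is essentially no obstacle here: the whole argument is a two-line application of Lemma \ref{xxlem2.12} combined with the defining isomorphism $M \cong M^{\vee\vee}$ for a reflexive module. The only small point to verify carefully is that $M^{\vee}$ is finitely generated (so that Lemma \ref{xxlem2.12} applies to it), which follows from the noetherian hypothesis on $A$ that is standing throughout the paper.
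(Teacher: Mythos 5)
Your proof is correct and is essentially the same as the paper's, which reads "Use Lemma \ref{xxlem2.12} and the fact $M\cong\Hom_{A^{\op}}(\Hom_A(M,A),A)$"; you have merely made explicit the left–right symmetry of Lemma \ref{xxlem2.12} and the finite generation of $M^{\vee}$, both of which the paper leaves tacit. (The only superfluous step is the initial application of the lemma to $M$ itself to get a projective-dimension bound on $M^{\vee}$, which is not actually used; only the finite generation of $M^{\vee}$ is needed before applying the opposite-side version.)
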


\begin{proof} Use Lemma \ref{xxlem2.12} and the fact
$M\cong\Hom_{A^{\op}}(\Hom_A(M,A),A)$.
\end{proof}

Next we recall some results about spectral sequences.

%\begin{lemma}
%Let $A$ be a CM algebra and $M$ a nonzero right reflexive $A$-module.
%Then $\GK M=\GK A.$
%\end{lemma}
%\begin{proof}
%Denote $(-)^*=\Hom_A(-,A).$ Since $0\neq M\cong\Hom_{A^{\op}}(M^*,A),$
%$j_{A^{\op}}(M^*)=0.$ Replace $M^*$ with $M$, then we have
%$j_A(\Hom_{A^{\op}}(M^*,A))=j_A(M)=0.$ Moreover $A$ is CM, thus
%$\GK M=\GK A.$
%\end{proof}

If $A$ is noetherian with $\injdim A<\infty$ and $M$ is a finitely
generated $A$-module, then there is a convergent spectral sequence
\cite[Theorem 2.2(a)]{Le}, see \eqref{E2.13.1} below. To simplify
notation later, we use a non-standard indexing of $E_2^{pq}$, with
our indexing, the boundary maps on the $E_2$-page are
$d_2^{p,q}: E_2^{pq}\rightarrow E_2^{p+2,q+1}$:
\begin{equation}
\label{E2.13.1}\tag{E2.13.1}
E_2^{pq}:=\Ext_{A^{\op}}^p(\Ext_A^q(M,A),A)\Rightarrow \Hm^{p-q}(M)
:=\left\{
\begin{array}{ll}
  0, & \mbox{if $p\neq q$,}\\
  M, &  \mbox{if $p=q$.}
\end{array}
\right.
\end{equation}
When $A$ is Auslander-Gorenstein
with $\injdim A=d$, there is a canonical filtration
\begin{equation}
\label{E2.13.2}\tag{E2.13.2}
0=F^{d+1}M\subseteq F^dM\subseteq\cdots\subseteq F^1M\subseteq F^0M=M
\end{equation}
such that $F^pM/F^{p+1}M\cong E_{\infty}^{pp}$. By \cite[Theorem 2.2]{Le},
for each $p$, there exists an exact sequence
$$0\longrightarrow E_{\infty}^{pp}\longrightarrow E_2^{pp}
\longrightarrow Q(p)\longrightarrow0$$
with $j(Q(p))\geq p+2.$

%In addition, we claim that if $M$ is $j(M)$-pure, then $E_2^{pp}=0$
%for any $p\neq j(M).$ It's true for $p<j(M)$ by the Auslander
%condition, so we just need to show that it is true for $p>j(M).$
%For $p>j(M),$ if $E_2^{pp}\neq0,$ then $E_2^{pp}$ is $p$-pure
%(see Remark \ref{xxrem2.4}(4)) and $M$ is $j(M)$-pure implies
%that $E_{\infty}^{pp}=0$ (if $F^{p}M\neq0,$ then by purity of
%$M$ and the fact that $F^pM$ is a submodule of $M$, we know
%that $j(F^pM)=j(M)<p,$ which contradicts to $j(F^pM)\geq p$).
%In this case, by the above exact sequence,
%$j(E_2^{pp})=j(Q(p))\geq p+2,$ a contradition. So we have the
%following proposition:

We collect some facts which can be shown by using the above spectral
sequences.

\begin{proposition}\cite[Theorem 2.4]{Le}
\label{xxpro2.14}
Let $A$ be Auslander-Gorenstein and $M$ be a nonzero finitely
generated $A$-module. If $n=j_A(M)$, then $\Ext_A^n(M,A)$
is $n$-pure and $(E_2^{pp}=)\Ext_{A^{\op}}^p(\Ext_A^p(M,A),A)$
is either $0$ or $p$-pure for every integer $p$.
\end{proposition}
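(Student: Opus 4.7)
The plan is to analyze the double-$\Ext$ spectral sequence \eqref{E2.13.1} and its canonical filtration \eqref{E2.13.2}, using the Auslander condition to control grades. The Auslander condition provides the lower bounds immediately: applied to $M$, every submodule of $\Ext^n_A(M,A)$ has grade at least $n$; applied to $\Ext^p_A(M,A)$, every submodule of $E_2^{pp}=\Ext^p_{A^{\op}}(\Ext^p_A(M,A),A)$ has grade at least $p$. What remains is to show the matching upper bound, namely that no nonzero finitely generated submodule has strictly larger grade.

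The key technical step is to establish the maximality property of the grade filtration: $F^{p+1}M$ is the largest submodule of $M$ of grade $\geq p+1$. I would prove this first by inducting down the filtration \eqref{E2.13.2}, using the short exact sequence $0\to E_\infty^{pp}\to E_2^{pp}\to Q(p)\to 0$ with $j(Q(p))\geq p+2$, together with Proposition \ref{xxpro2.2} and the Auslander bound on $E_\infty^{pp}$ viewed as a subquotient of $E_2^{pp}$.

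For the $p$-purity of $E_2^{pp}$, suppose for contradiction that $L\subseteq E_2^{pp}$ is a nonzero finitely generated submodule with $j(L)>p$. The preimage $K\subseteq F^pM$ of $L\cap E_\infty^{pp}$ properly contains $F^{p+1}M$ and still has grade $\geq p+1$ by Proposition \ref{xxpro2.2}; the maximality property then forces $K=F^{p+1}M$, so $L\cap E_\infty^{pp}=0$ and $L$ injects into $Q(p)$, yielding $j(L)\geq p+2$. Iterating with the finer filtration of $Q(p)$ coming from the higher spectral-sequence pages $E_r^{pp}$ (whose successive quotients inject into $E_2^{p+r,p+r-1}$ with grade $\geq p+r$ by the Auslander condition on $\Ext^{p+r-1}_A(M,A)$), one pushes $L$ into modules of arbitrarily high grade, contradicting $j(L)\leq\injdim A<\infty$. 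Hence $L=0$. For the $n$-purity of $N=\Ext^n_A(M,A)$, I would apply the same analysis to the spectral sequence for $N$ viewed as a left $A$-module, combined with the already-established part on $E_2^{pp}(N)$, to conclude that $F^{n+1}N$ vanishes.

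The main obstacle is intertwining two filtrations: the spectral-sequence-page filtration on $E_2^{pp}$ and the grade filtration on $M$. One must carefully align them so that each iteration of the argument strictly increases the grade of a hypothetical $L$, with finiteness of $\injdim A$ being what makes the process terminate in contradiction.
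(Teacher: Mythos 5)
The paper does not actually prove this proposition; it is cited directly from \cite[Theorem 2.4]{Le}, so there is no internal proof to compare against. I will therefore assess your argument on its own terms. Your opening moves are sound: the grade filtration $F^{\bullet}M$ coincides with the spectral-sequence filtration \eqref{E2.13.2}, $F^{p+1}M$ is indeed the largest submodule of grade $\geq p+1$, and since $E_2^{p-r,p-r+1}=0$ by the Auslander condition, $E_\infty^{pp}$ is genuinely a submodule of $E_2^{pp}$, so the argument that $L\cap E_\infty^{pp}=0$ (hence $L\hookrightarrow Q(p)$ and $j(L)\geq p+2$) is correct. The gap is in the claim that you can then ``push $L$ into modules of arbitrarily high grade.'' The page filtration $E_2^{pp}\supseteq E_3^{pp}\supseteq\cdots\supseteq E_\infty^{pp}$ has bottom quotient $E_2^{pp}/E_3^{pp}\hookrightarrow E_2^{p+2,p+1}$, which only has grade $\geq p+2$; a hypothetical $L$ of grade exactly $p+2$ could map injectively into that piece and your argument gives no contradiction. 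The maximality device that forced $L\cap E_\infty^{pp}=0$ used the relation $E_\infty^{pp}=F^pM/F^{p+1}M$, and $E_3^{pp}$ has no analogous description in terms of the grade filtration of $M$, so it does not re-apply. The same defect affects your closing sentence about $N=\Ext^n_A(M,A)$: knowing $E_2^{pp}(N)$ is $p$-pure is strictly weaker than knowing $E_\infty^{pp}(N)=F^pN/F^{p+1}N$ vanishes for $p>n$, so the conclusion $F^{n+1}N=0$ does not follow.

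A workable route stays inside the tools you have but uses them differently. First note the second assertion follows from the first: if $X:=\Ext^p_A(M,A)\neq 0$ then $j(X)\geq p$; if $j(X)>p$ then $E_2^{pp}=\Ext^p(X,A)=0$, and if $j(X)=p$ apply the first assertion to $X$. For the first assertion, replace $M$ by $M/F^{n+1}M$ (this does not change $\Ext^n_A(M,A)$), so you may assume $M$ is $n$-pure. Then the key computation is on the \emph{off-diagonal} terms $E_2^{pn}(M)=\Ext^p_{A^{\op}}(N,A)$ for $p>n$: since $M$ is $n$-pure, $\Ext^q_A(M,A)=0$ for $q<n$, so $E_r^{pn}$ receives no incoming differentials and the page filtration $E_2^{pn}\supseteq E_3^{pn}\supseteq\cdots$ terminates at $E_\infty^{pn}=0$ with quotients $\mathrm{im}(d_r^{pn})\subseteq E_r^{p+r,n+r-1}$ of grade $\geq p+r\geq p+2$ by the Auslander condition. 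By Proposition \ref{xxpro2.2}, $j(\Ext^p_{A^{\op}}(N,A))\geq p+2>p$, hence $E_2^{pp}(N)=\Ext^p_A(\Ext^p_{A^{\op}}(N,A),A)=0$ for $p>n$ (and trivially for $p<n$). Proposition \ref{xxpro2.15} then gives that $N$ is $n$-pure. The crucial points you were missing are that the off-diagonal filtration, unlike the diagonal one, degenerates all the way to zero, yielding the strict grade bound $j\geq p+2$, and that Proposition \ref{xxpro2.15} converts purity into the vanishing statement $E_2^{pp}(N)=0$ rather than the weaker purity of $E_2^{pp}(N)$.
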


\begin{proposition}\cite[Proposition 1.9]{Bj}
\label{xxpro2.15}
Let $A$ be Auslander-Gorenstein. Then a finitely generated $A$-module
$M$ is $j(M)$-pure if and only if $E_2^{pp}=0$ for any $p\neq j(M).$
\end{proposition}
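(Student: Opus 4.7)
Let $n=j(M)$. The plan is to extract the result from the spectral sequence \eqref{E2.13.1}, its canonical filtration \eqref{E2.13.2}, the short exact sequences $0\to E_\infty^{pp}\to E_2^{pp}\to Q(p)\to 0$ with $j(Q(p))\geq p+2$, and Proposition \ref{xxpro2.14} asserting that each $E_2^{pp}$ is zero or $p$-pure.

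For the ``if'' direction, assume $E_2^{pp}=0$ for every $p\neq n$. Since $E_\infty^{pp}\subseteq E_2^{pp}$, we also have $E_\infty^{pp}=0$ for $p\neq n$. The filtration \eqref{E2.13.2} then collapses: $F^pM=F^{p+1}M$ for every $p\neq n$, forcing $F^pM=0$ for $p>n$ and $M=F^0M=\cdots=F^nM$. Thus $M\cong F^nM/F^{n+1}M\cong E_\infty^{nn}$, a submodule of $E_2^{nn}$. By Proposition \ref{xxpro2.14}, $E_2^{nn}$ is $0$ or $n$-pure; since $M\neq 0$, it must be $n$-pure, and hence so is its submodule $M$.

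For the ``only if'' direction, assume $M$ is $n$-pure. The case $p<n$ is trivial: $\Ext_A^p(M,A)=0$ by the definition of $j(M)=n$, so $E_2^{pp}=0$. The main work is the case $p>n$, handled in two steps. First I would show $E_\infty^{pp}=0$ for $p>n$. Let $p_0$ be the largest integer with $F^{p_0}M\neq 0$, which exists because the filtration is finite and $F^0M=M\neq 0$. Then $F^{p_0+1}M=0$, so $F^{p_0}M\cong E_\infty^{p_0p_0}$ is a nonzero submodule of the $p_0$-pure module $E_2^{p_0p_0}$ (Proposition \ref{xxpro2.14}), hence itself $p_0$-pure, giving $j(F^{p_0}M)=p_0$. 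On the other hand, $F^{p_0}M$ is a nonzero finitely generated submodule of the $n$-pure module $M$, so $j(F^{p_0}M)=n$. Therefore $p_0=n$, and consequently $E_\infty^{pp}=F^pM/F^{p+1}M=0$ for every $p>n$.

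Second, with $E_\infty^{pp}=0$ the short exact sequence $0\to E_\infty^{pp}\to E_2^{pp}\to Q(p)\to 0$ reduces to $E_2^{pp}\cong Q(p)$, so $j(E_2^{pp})\geq p+2$. If $E_2^{pp}$ were nonzero, Proposition \ref{xxpro2.14} would force it to be $p$-pure, i.e.\ $j(E_2^{pp})=p$, contradicting $j(E_2^{pp})\geq p+2$. Hence $E_2^{pp}=0$ for all $p>n$, completing the argument. The main obstacle is promoting the vanishing of $E_\infty^{pp}$ to vanishing of $E_2^{pp}$; this is precisely where the combination of $p$-purity from Proposition \ref{xxpro2.14} with the grade estimate $j(Q(p))\geq p+2$ becomes essential.
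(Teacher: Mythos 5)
Your argument is correct. The paper states this result as a citation to Bj\"ork's Proposition~1.9 and gives no proof of its own, so there is no internal proof to compare against; what you have reconstructed is essentially the standard argument, using exactly the tools the paper sets up around it. Both directions are handled cleanly: the ``if'' direction correctly collapses the filtration \eqref{E2.13.2} to identify $M\cong E_\infty^{nn}\subseteq E_2^{nn}$ and then invokes the purity of $E_2^{nn}$ from Proposition~\ref{xxpro2.14}. For the ``only if'' direction, the key step is your identification of $p_0=n$ via the two competing computations of $j(F^{p_0}M)$ (once as a submodule of the $p_0$-pure module $E_2^{p_0p_0}$, once as a submodule of the $n$-pure module $M$), and then promoting $E_\infty^{pp}=0$ to $E_2^{pp}=0$ for $p>n$ by playing the grade bound $j(Q(p))\geq p+2$ against the $p$-purity of $E_2^{pp}$. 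That last tension is indeed where the proof lives, and you identified it explicitly. One tiny remark: the definition of purity in Definition~\ref{xxdef2.1}(2) is only for nonzero modules, so the tacit assumption $M\neq 0$ (which you use when choosing $p_0$) is implicit in the statement and causes no trouble.
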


\begin{corollary}
\label{xxcor2.16}
Let $A$ be Auslander-Gorenstein and $M$  a nonzero reflexive $A$-module.
Then $M$ is $0$-pure, and $E_2^{pp}=0$ for any $p\neq 0$. As a consequence,
if $A$  is also a $\partial$-$\CM$ algebra, then $\partial(M)=\partial(A)=
\partial(N)$ for any nonzero reflexive $A$-module $M$ and any nonzero
submodule $N$ of $M$.
\end{corollary}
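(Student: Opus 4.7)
The plan is to chase the spectral sequence \eqref{E2.13.1} at $M$ after using reflexivity to identify $M$ with $E_2^{00}$.

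First I would observe that $M\neq 0$ reflexive forces $\Hom_A(M,A)\neq 0$, since otherwise $M^{\vee\vee}=0$ while $M\cong M^{\vee\vee}$. Thus $j_A(M)=0$. Applying Proposition \ref{xxpro2.14} with $n=j_A(M)=0$, the terms $E_2^{pp}=\Ext^p_{A^{\op}}(\Ext^p_A(M,A),A)$ are each either zero or $p$-pure; in particular $E_2^{00}=M^{\vee\vee}$ is either zero or $0$-pure. Because the natural map $M\to M^{\vee\vee}$ is an isomorphism and $M\neq 0$, the term $E_2^{00}\cong M$ is nonzero, hence $0$-pure. This is the first assertion.

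Next, with $j(M)=0$ established and $M$ shown to be $0$-pure, Proposition \ref{xxpro2.15} immediately yields $E_2^{pp}=0$ for every $p\neq 0$, giving the second assertion.

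For the consequence, assume further that $A$ is $\partial$-$\CM$. Since $j_A(M)=0$, the defining identity $j(M)+\partial(M)=\partial(A)$ in Definition \ref{xxdef2.3} yields $\partial(M)=\partial(A)$. Now let $N$ be any nonzero submodule of $M$. Because $A$ is noetherian and $M$ is finitely generated, $N$ is finitely generated. Since $M$ is $0$-pure, every nonzero finitely generated submodule of $M$ has grade $0$; applied to $N$ this gives $j_A(N)=0$, and the $\partial$-$\CM$ identity again yields $\partial(N)=\partial(A)=\partial(M)$, completing the proof.

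The only potentially delicate point is ensuring that Proposition \ref{xxpro2.14} is being invoked correctly with the non-standard indexing of the spectral sequence \eqref{E2.13.1}, so that $M\cong M^{\vee\vee}$ does coincide with the $(0,0)$-term; once that identification is in hand, the rest is a direct application of the two cited propositions together with the defining equality for a $\partial$-$\CM$ algebra.
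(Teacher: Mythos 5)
Your proposal is correct and follows essentially the same route as the paper: identify $j(M)=0$ from reflexivity, use Proposition \ref{xxpro2.14} to conclude $E_2^{00}\cong M$ is $0$-pure, invoke Proposition \ref{xxpro2.15} for the vanishing of $E_2^{pp}$ with $p\neq0$, and finish with the $\partial$-$\CM$ identity. The paper's proof is simply more compressed; your elaboration of why $j(M)=0$ and why the purity propagates to submodules $N$ is exactly the intended reading.
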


\begin{proof}
Suppose that $M$ is a nonzero reflexive $A$-module, then $j(M)=0$ and
$$0\neq E_2^{00}=\Hom_{A^{\op}}(\Hom_A(M,A),A)\cong M,$$
which  is $0$-pure by Proposition \ref{xxpro2.14}. By Proposition
\ref{xxpro2.15}, $E_2^{pp}=0$ for every $p\neq 0$. The remaining statement
follows by the definition of $\partial$-CM.
\end{proof}

\begin{lemma}
\label{xxlem2.17}
Let $A$ be Auslander-Gorenstein and $M$ a finitely generated $m$-pure
$A$-module where $m=j(M)$. Then $M=F^mM\supseteq F^{m+1}M=0.$ Further,
$M=E_{\infty}^{mm}M\subseteq E_2^{mm}M.$ In particular,
if $M$ is a $0$-pure module, then
$$M\subseteq M^{\vee\vee}:=\Hom_{A^{\op}}(\Hom_A(M,A),A).$$
\end{lemma}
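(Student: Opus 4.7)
The plan is to read off everything from the spectral sequence \eqref{E2.13.1} together with its associated filtration \eqref{E2.13.2}, using purity to force the filtration to collapse to a single step. The essential input is Proposition \ref{xxpro2.15}: because $M$ is $m$-pure with $m=j(M)$, the $E_2$-page is concentrated on the diagonal at a single spot, namely $E_2^{pp}=0$ for all $p\neq m$.

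First I would pass this vanishing through to the $E_\infty$-page. Since every $E_\infty^{pp}$ is a subquotient of $E_2^{pp}$, we also get $E_\infty^{pp}=0$ for $p\neq m$. Feeding this into the isomorphisms $F^pM/F^{p+1}M\cong E_\infty^{pp}$ coming from \eqref{E2.13.2}, the filtration becomes constant on each side of $p=m$: $F^pM=F^{p+1}M$ whenever $p\neq m$. Combining this with the endpoints $F^0M=M$ and $F^{d+1}M=0$ forces
\[
M=F^0M=F^1M=\cdots=F^mM\quad\text{and}\quad F^{m+1}M=\cdots=F^{d+1}M=0,
\]
which is the first assertion.

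Next, the identification $M=F^mM/F^{m+1}M\cong E_\infty^{mm}$ follows immediately from the filtration just computed. To embed $E_\infty^{mm}$ into $E_2^{mm}$, I would invoke the short exact sequence
\[
0\longrightarrow E_\infty^{pp}\longrightarrow E_2^{pp}\longrightarrow Q(p)\longrightarrow 0
\]
quoted from \cite[Theorem 2.2]{Le} just before the lemma, specialized to $p=m$. This yields $M\cong E_\infty^{mm}\hookrightarrow E_2^{mm}$, which is the second assertion.

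For the last sentence, specialize to $m=0$: then $E_2^{00}=\Ext^0_{A^{\op}}(\Ext^0_A(M,A),A)=\Hom_{A^{\op}}(\Hom_A(M,A),A)=M^{\vee\vee}$, so the inclusion $M\hookrightarrow E_2^{mm}$ becomes $M\hookrightarrow M^{\vee\vee}$. There is no genuine obstacle here; the only point to be careful about is bookkeeping with the (non-standard) indexing conventions of the spectral sequence declared after \eqref{E2.13.1}, in particular to verify that the $E_\infty^{pp}\hookrightarrow E_2^{pp}$ edge map is indeed the one asserted.
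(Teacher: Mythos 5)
Your proof is correct and follows essentially the same route as the paper's: invoke Proposition \ref{xxpro2.15} to get $E_2^{pp}=0$ for $p\neq m$, pass to $E_\infty^{pp}=0$, read off the collapse of the filtration \eqref{E2.13.2}, and then use the short exact sequence $0\to E_\infty^{mm}\to E_2^{mm}\to Q(m)\to 0$ quoted just before the lemma. The paper's version is terser (it only writes down the $p=m+1$ step and leaves the $p<m$ steps and the $E_\infty^{mm}\hookrightarrow E_2^{mm}$ edge map implicit), but your write-up fills in exactly those details without changing the argument.
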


\begin{proof}
If $M$ is $m$-pure, then $E_2^{pp}=0$ for every $p\neq m$. Therefore
$$E_{\infty}^{pp}=0=F^pM/F^{p+1}M$$ for $p\neq m$. Taking $p=m+1$, we
obtain that $F^{m+1}M=F^{m+2}M=\cdots=0$, as required.
\end{proof}

\begin{proposition}
\label{xxpro2.18}
Let $A$ be an Auslander regular algebra with $\gldim A=3$. If $M$ is
a nonzero reflexive $A$-module, then
$$E_2^{pq}\cong\left\{
\begin{array}{ll}
  M, & \mbox{if $p=q=0$},\\
  E_2^{10}\cong E_2^{31}, & \mbox{if $p=1,q=0$},\\
  E_2^{31} \cong E_2^{10}, & \mbox{if $p=3,q=1$},\\
  0,& \mbox{otherwise}.
\end{array}
\right.$$
\end{proposition}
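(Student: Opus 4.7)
The plan is to eliminate most $E_2$-positions using the standing hypotheses, and then extract the remaining isomorphism $E_2^{1,0}\cong E_2^{3,1}$ from the convergence of the spectral sequence \eqref{E2.13.1} to $\Hm^{p-q}(M)$.

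First, I would catalog the obvious vanishings. Since $\gldim A=3$, we have $E_2^{p,q}=0$ whenever $p>3$ or $q>3$. The Auslander condition on the left $A$-module $\Ext_A^q(M,A)$ forces $j_{A^{\op}}(\Ext_A^q(M,A))\geq q$, so $E_2^{p,q}=\Ext^p_{A^{\op}}(\Ext^q_A(M,A),A)=0$ for all $p<q$. Reflexivity of $M$, combined with Corollary \ref{xxcor2.16}, gives $E_2^{0,0}\cong M$ and $E_2^{p,p}=0$ for $p=1,2,3$. Finally, Lemma \ref{xxlem2.12} applied to $M$ yields $\pd_{A^{\op}}M^{\vee}\leq 1$, so $E_2^{p,0}=\Ext_{A^{\op}}^p(M^{\vee},A)=0$ for $p\geq 2$. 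This leaves only the four off-diagonal positions $(1,0)$, $(2,1)$, $(3,1)$, $(3,2)$ as potentially nonzero.

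Next I would run the convergence analysis. Because the $E_2$-page is confined to the box $0\leq p,q\leq 3$, all differentials $d_r$ with $r\geq 4$ vanish, so $E_4=E_{\infty}$. Since $E_2^{p,p}=0$ for $p\neq 0$, the filtration of the total-degree-zero abutment $\Hm^0(M)=M$ forces $E_\infty^{0,0}=M$; this in turn forces both $d_2^{0,0}:M\to E_2^{2,1}$ and $d_3^{0,0}:E_3^{0,0}\to E_3^{3,2}$ to vanish. Tracking the remaining differentials within the four surviving positions, and using that $E_2^{1,1}=0$ (so $E_3^{3,2}=E_2^{3,2}$) and that all targets of $d_3$-differentials out of positions of total degree $1$ or $2$ lie outside the box, one finds $E_\infty^{2,1}=E_2^{2,1}$, $E_\infty^{3,2}=E_2^{3,2}$, $E_\infty^{1,0}=\ker d_2^{1,0}$, and $E_\infty^{3,1}=\coker d_2^{1,0}$. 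Convergence to $0$ in total degrees $1$ and $2$ then gives $E_2^{2,1}=E_2^{3,2}=0$ together with the injectivity and surjectivity of $d_2^{1,0}:E_2^{1,0}\to E_2^{3,1}$, whence $E_2^{1,0}\cong E_2^{3,1}$.

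The main obstacle is spectral-sequence bookkeeping: one must verify that no higher differential disturbs the four surviving positions. Fortunately, the $\gldim A=3$ bound keeps everything inside a $4\times 4$ box so that all $d_r$ with $r\geq 4$ are automatically zero, and the only differential beyond $d_2$ that needs inspection is $d_3^{0,0}$, which is killed by the reflexivity-driven identification $E_\infty^{0,0}=M$. The rest is a straightforward linear-algebra check on the $E_2$-page.
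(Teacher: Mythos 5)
Your proposal is correct and takes essentially the same route as the paper: catalog the forced vanishings on the $E_2$-page, then exploit convergence of the Ischebeck-type spectral sequence \eqref{E2.13.1} to kill the remaining off-diagonal terms and extract $E_2^{1,0}\cong E_2^{3,1}$ from the $d_2$-differential. One small difference worth noting: the paper first disposes of $E_2^{3,2}$ by invoking $\pd_A M\leq 1$ (Corollary~\ref{xxcor2.13}), whereas you obtain $E_2^{3,2}=0$ for free from $E_\infty^{3,2}$ being a graded piece of $\Hm^1=0$; you also spell out more explicitly than the paper does that $E_\infty^{1,0}=\ker d_2^{1,0}$ and $E_\infty^{3,1}=\coker d_2^{1,0}$ must both vanish, which is what upgrades $d_2^{1,0}$ to an isomorphism. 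Both variants are fine; yours trades one application of a corollary for a bit more bookkeeping, and is arguably a touch more self-contained.
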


\begin{proof}
Since  $A$ is Auslander regular,  we have that the $E_2$ table
for $M$  looks like
$$
\begin{array}{cccc}
0& 0& 0& E^{33}\\
0& 0& E^{22}& E^{32}\\
0& E^{11}& E^{21}& E^{31}\\
E^{00}& E^{10}& E^{20}& E^{30}
\end{array}
$$
with $E^{20}=E^{30}=0$ by Lemma \ref{xxlem2.12},
$E^{11}=E^{22}=E^{33}=0$ by Propositions \ref{xxpro2.14} and
\ref{xxpro2.15}, and $E^{32}=0$ since $\pd M\leq 1$ by
Lemma \ref{xxlem2.12}. Hence the $E_2$ table reduces to
$$
\begin{array}{cccc}
0& 0& 0& 0\\
0& 0& 0& 0\\
0& 0& E^{21}& E^{31}\\
E^{00}& E^{10}& 0& 0
\end{array}
$$
and so it suffices to show that $E^{21}=0.$
By \eqref{E2.13.1}-\eqref{E2.13.2}, there is
a canonical filtration
$0=F^4M\subseteq F^3M\subseteq F^2M\subseteq F^1M\subseteq F^0M=M$
such that $F^pM/F^{p+1}M\cong E_{\infty}^{pp}$. It is obvious that
$F^1M=0$. Then $E_{\infty}^{pp}=0$ for every $p\neq0$ by Proposition
\ref{xxpro2.15}, $E_{\infty}^{00}\cong F^0M/F^1M=M$, and further,
$d_2^{00}=0$ (as $M$ being reflexive), which implies that $E^{21}=0$.
Thus, the $E_2$ table for $M$ now looks like
$$
\begin{array}{cccc}
0& 0& 0& 0\\
0& 0& 0& 0\\
0& 0& 0& E^{31}\\
E^{00}& E^{10}& 0& 0
\end{array}
$$
with $E^{10}\cong E^{31}$. The assertion follows.
\end{proof}

\begin{lemma}
\label{xxlem2.19}
Let $A$ be an Auslander Gorenstein algebra and
$M$ be a finitely generated right $A$-module.
\begin{enumerate}
\item[(1)]
$M^{\vee}$ is either 0 or a finitely generated
reflexive left $A$-module.
\item[(2)]
If $M$ is 0-pure, then the Gabber closure $G_A(M)$ is
reflexive.
\end{enumerate}
\end{lemma}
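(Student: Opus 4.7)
The plan for part~(1) is to handle finite generation by a routine argument, then exploit the split injectivity of $\eta_{M^\vee}$ to reduce reflexivity to a purity calculation. For finite generation, applying $\Hom_A(-,A)$ to a presentation $A^m\to A^n\to M\to 0$ embeds $M^\vee$ into $A^n$ as a left module, and noetherianness of $A^{\op}$ forces $M^\vee$ to be finitely generated. For reflexivity when $M^\vee\neq 0$, I would exploit the standard triangle identity $(\eta_M)^\vee\circ\eta_{M^\vee}=\mathrm{id}_{M^\vee}$: it shows that $\eta_{M^\vee}\colon M^\vee\to M^{\vee\vee\vee}$ is split injective, whence the cokernel $C':=\coker\eta_{M^\vee}$ is a direct summand of $M^{\vee\vee\vee}$.

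Next I would invoke the spectral sequence \eqref{E2.13.1} applied over $A^{\op}$ to the left module $M^\vee$, whose $E_2^{00}$ term is $M^{\vee\vee\vee}$; the canonical short exact sequence $0\to E_\infty^{00}\to E_2^{00}\to Q(0)\to 0$ identifies $\im\eta_{M^\vee}$ with $E_\infty^{00}$ and exhibits $C'\cong Q(0)$ with $j(C')\geq 2$. To annihilate $C'$ I would show that the ambient module $M^{\vee\vee\vee}$ is $0$-pure: since $M^\vee\hookrightarrow M^{\vee\vee\vee}$ is nonzero, we have $j_A(M^{\vee\vee})=0$ (otherwise $\Hom_A(M^{\vee\vee},A)$ would vanish), and Proposition~\ref{xxpro2.14} applied to the finitely generated right module $M^{\vee\vee}$ then gives that $\Ext^0_A(M^{\vee\vee},A)=M^{\vee\vee\vee}$ is $0$-pure. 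A $0$-pure module has no nonzero submodule of grade $\geq 2$, so $C'$ must vanish and $\eta_{M^\vee}$ is an isomorphism.

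The plan for part~(2) is to exploit the rigidity of the Gabber closure. Since $M$ is $0$-pure, Lemma~\ref{xxlem2.7} identifies $G_A(M)=M^{\vee\vee}$ via $\eta_M$, and by Definition~\ref{xxdef2.5} the tame and pure extension $M\hookrightarrow M^{\vee\vee}$ forces $M^{\vee\vee}$ to be $0$-pure as well. Applying the Gabber closure construction now to $M^{\vee\vee}$ yields $G_A(M^{\vee\vee})=(M^{\vee\vee})^{\vee\vee}$ via $\eta_{M^{\vee\vee}}$, which is by construction a tame and pure extension of $M^{\vee\vee}$. But Proposition~\ref{xxpro2.10}(4) asserts that $M^{\vee\vee}=G_A(M)$ admits no proper tame and pure extension, so $\eta_{M^{\vee\vee}}$ must be an isomorphism, and hence $M^{\vee\vee}$ is reflexive.

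The main technical hurdle is in part~(1), where one must simultaneously produce a lower grade bound on $C'$ from the spectral sequence and an upper bound of $0$ coming from purity of $M^{\vee\vee\vee}$; once the grade identity $j_A(M^{\vee\vee})=0$ is correctly pinned down via the nonvanishing of $M^\vee$, the two bounds collide to kill $C'$. Part~(2) is essentially formal once the no-proper-extension property of Proposition~\ref{xxpro2.10}(4) is in hand.
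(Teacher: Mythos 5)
Your proposal is correct, but in both parts it travels a genuinely different route from the paper's.

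For part~(1), the paper first quotients $M$ by the largest submodule of positive grade to reduce to the case where $M$ is $0$-pure, then applies $(-)^\vee$ to the tame-and-pure short exact sequence $0\to M\to M^{\vee\vee}\to M'\to 0$ of \cite[Prop.~1.13]{Bj}; the hypothesis $j(M')\geq 2$ kills $(M')^\vee$ and $\Ext^1_A(M',A)$, giving $(M^{\vee\vee})^\vee\cong M^\vee$ directly. You avoid the reduction step and instead work on the dual side: you identify $\coker\eta_{M^\vee}$ with the term $Q(0)$ of grade $\geq 2$ coming from the spectral sequence \eqref{E2.13.1} for $M^\vee$, show via Proposition~\ref{xxpro2.14} that the ambient module $M^{\vee\vee\vee}=\Ext^0_A(M^{\vee\vee},A)$ is $0$-pure, and let the split injectivity from the triangle identity turn $Q(0)$ into a direct summand of a $0$-pure module, forcing it to vanish. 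The two arguments are conceptually parallel --- in both, a grade-$\geq 2$ cokernel collides with a purity constraint --- but yours trades the paper's reduction-to-$0$-pure step for an extra layer of spectral-sequence bookkeeping; the paper's version is arguably leaner, while yours makes the role of the triangle identity and the $0$-purity of $M^{\vee\vee\vee}$ more explicit.

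For part~(2), the paper deduces it in one line from part~(1) applied to the left module $M^\vee$. You instead give a self-contained argument using the rigidity of the Gabber closure: since $M^{\vee\vee}=G_A(M)$ is $0$-pure, its Gabber closure is $(M^{\vee\vee})^{\vee\vee}$ and is a tame and pure extension of $M^{\vee\vee}$, but Proposition~\ref{xxpro2.10}(4) forbids a proper one, so $\eta_{M^{\vee\vee}}$ must be an isomorphism. This makes~(2) logically independent of~(1), which is a modest structural difference, though naturally a bit longer than citing~(1).
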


\begin{proof} (1) It is well-known that $M^\vee$
is a finitely generated left $A$-module.

Let $N$ be the largest submodule of $M$ such that
$j(N)>0$ or $N^{\vee}=0$. Then we have a short
exact sequence
$$0\to N\to M\to M/N\to 0,$$
which gives rise to an exact sequence
$$0\to (M/N)^{\vee}\to M^{\vee}\to N^\vee\to \cdots .$$
Since $N^{\vee}=0$, we have $(M/N)^{\vee}\cong
M^{\vee}$. To prove the assertion one may assume
that $N=0$, or equivalently, $M$ is 0-pure.

By \cite[Proposition 1.13]{Bj} (also see Lemma
\ref{xxlem2.7}), there is a short exact sequence
$$0\to M\to M^{\vee\vee}\to M'\to 0,$$
where $j(M')\geq 2$. The above short exact sequence
gives rise to an exact sequence
$$0\to (M')^{\vee}\to (M^{\vee\vee})^{\vee}
\to M^{\vee} \to \Ext^1_A(M',A)\to \cdots .$$
Since $j(M')\geq 2$, we have $(M')^{\vee}
=\Ext^1_A(M',A)=0$. Therefore $(M^{\vee\vee})^{\vee}$
is naturally isomorphic to $M^{\vee}$ as required.

(2) By Lemma \ref{xxlem2.7}, the Gabber closure of $M$ is
isomorphic to $M^{\vee\vee}$. The assertion follows from
part (1).
\end{proof}

\section{A NQR of an algebra}
\label{xxsec3}
In this section, we introduce the notion of a noncommutative
quasi-resolution (NQR), which is a further generalization of
the notion of a NCCR, and then study some basic properties.

Let ${\mathcal A}$ be a category consisting of a class of noetherian
$\Bbbk$-algebras such that $A$ is in ${\mathcal A}$ if and
only if $A^{\op}$ is in ${\mathcal A}$. Together with
${\mathcal A}$ we consider a special class of
modules/morphisms/bimodules. Our definition of a noncommutative
quasi-resolution will be made inside the category ${\mathcal A}$.
Sometimes it is necessary to be specific, but in the most of
cases, it is quite easy to understand what is the setting of
${\mathcal A}$. We also need to specify or fix a dimension
function $\partial$. Here are a few examples.

\begin{example}
\label{xxex3.1}
\begin{enumerate}
\item[(1)]
Let ${\mathcal A}$ be the category of ${\mathbb N}$-graded locally
finite noetherian $\Bbbk$-algebras with finite GK-dimension. We
only consider graded modules. An $(A,B)$-bimodule is a
${\mathbb Z}$-graded module that has both left graded $A$-module
and right graded $B$-module structures. The dimension function
$\partial$ is chosen to be $\GK$.
\item[(2)]
We might modify the category in part (1) by restricting algebras
to those with balanced Auslander dualizing complexes in the sense
of \cite{YZ2}. In this case, we might take the dimension function
to be a constant shift of the canonical dimension defined in
\cite[Definition 2.9]{YZ2}.
\item[(3)]
Let $R$ be a noetherian commutative algebra with finite Krull
dimension. Let ${\mathcal A}$ be
the category of algebras that are module-finite $R$-algebras.
Modules are usual modules, but an $(A,B)$-bimodule means an
$R$-central $(A,B)$-bimodule. The dimension function in this case
could be the Krull dimension.
\end{enumerate}
\end{example}

Unless otherwise stated, we retain Hypothesis \ref{xxhyp1.3}
concerning the fixed dimension function $\partial$ for modules
over $A$, $B$ and $B_i$, a bimodule (such as $M$ or $N$ in most
of cases) over these rings in this section (including Definitions
\ref{xxdef3.2} and \ref{xxdef3.16}) is finitely generated on both sides.
As a consequence of these assumptions, $\partial(M)$ can be defined
by considering $M$ as either a left or a right module. Therefore
$M\cong_{n}N$ is well-defined on either left or right sides for
another bimodule $N$. If we use other rings such as $D$, we may not
assume these hypotheses.

Here is our main definition.

\begin{definition}
\label{xxdef3.2}
Let $A\in {\mathcal A}$ be an algebra with $\partial(A)=d$. Let
$s$ be an integer between $0$ and $d-2$.
\begin{enumerate}
\item[(1)]
If there are an Auslander regular $\partial$-$\CM$ algebra
$B\in {\mathcal A}$ with $\partial(B)=d$ and two bimodules
$_B\!M_A$ and $_A\!N_B$ such that
$$M\otimes_A N\cong_{d-2-s}B,\quad N\otimes_B M\cong_{d-2-s}A$$
as bimodules, then the triple $(B,_B\!M_A,_A\!N_B)$ is
called an {\it $s$-noncommutative quasi-resolution}
({\it $s$-NQR} for short) of $A$.
\item[(2)](Definition 0.5)
A $0$-noncommutative quasi-resolution ($0$-NQR) of $A$
is called a {\it noncommutative quasi-resolution}
({\it NQR} for short) of $A$.
\end{enumerate}
\end{definition}

We will see that the notion of a NQR is a generalization
of the notion of a NCCR in Section 7. First we prove
the following lemmas.

\begin{lemma}
\label{xxlem3.3}
Let $B$ be a $\partial$-$\CM$ algebra with $\partial(B)=d$.
\begin{enumerate}
\item[$(1)$]
If there exist $B$-modules $M$ and $N$ such that
$M\cong_{d-2}N$, then $M^{\vee}\cong N^{\vee}$, where
$(-)^{\vee}:=\Hom_B(-,B).$
\item[$(2)$]
Let $D$ be another algebra and supposed that $M$ and $N$
are $(D,B)$-bimodules such that $M\cong_{d-2}N$ as
$(D,B)$-bimodules. Then $M^{\vee}\cong N^{\vee}$ as
$(B,D)$-bimodules.
\end{enumerate}
\end{lemma}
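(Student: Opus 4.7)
The plan is to unpack the definition of $M\cong_{d-2}N$ by producing a common bimodule $P$ with maps $f\colon M\to P$ and $g\colon N\to P$ whose kernels and cokernels all have $\partial$-dimension $\leq d-2$, and then to show that applying $(-)^{\vee}=\Hom_B(-,B)$ turns both $f$ and $g$ into isomorphisms. Composing $f^{\vee}$ with the inverse of $g^{\vee}$ then yields the desired isomorphism $M^{\vee}\cong N^{\vee}$.

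The engine driving the argument is the $\partial$-$\CM$ hypothesis: since $\partial(B)=d$, any finitely generated $B$-module $K$ with $\partial(K)\leq d-2$ satisfies $j_B(K)\geq 2$, so that $\Ext^0_B(K,B)=\Ext^1_B(K,B)=0$. Using Remark \ref{xxrem1.6}(1) I may assume $M$, $N$, $P$ are finitely generated, hence so are $\ker f$, $\coker f$, $\ker g$, $\coker g$, all of $\partial$-dimension $\leq d-2$. Splitting the four-term exact sequence induced by $f$ into the two short exact sequences
\begin{equation*}
0\to \ker f\to M\to \im f\to 0,\qquad 0\to \im f\to P\to \coker f\to 0,
\end{equation*}
and applying $\Hom_B(-,B)$, the vanishing of $\Hom_B(\ker f,B)$ forces $M^{\vee}\cong(\im f)^{\vee}$, while the simultaneous vanishing of $\Hom_B(\coker f,B)$ and $\Ext^1_B(\coker f,B)$ forces $(\im f)^{\vee}\cong P^{\vee}$. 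Composing, $f^{\vee}\colon P^{\vee}\to M^{\vee}$ is an isomorphism. The same argument applied to $g$ yields an isomorphism $g^{\vee}\colon P^{\vee}\to N^{\vee}$, and therefore
\begin{equation*}
M^{\vee}\;\xleftarrow{\,f^{\vee}\,}\;P^{\vee}\;\xrightarrow{\,g^{\vee}\,}\;N^{\vee}
\end{equation*}
gives the asserted isomorphism $M^{\vee}\cong N^{\vee}$, establishing part~(1).

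For part~(2), I would observe that Definition \ref{xxdef1.5}(2) guarantees $f$ and $g$ can be chosen as $(D,B)$-bimodule maps and $P$ as a $(D,B)$-bimodule; the $\partial$-dimension conditions on the kernels and cokernels are measured as right $B$-modules, which is exactly what the $\partial$-$\CM$ Ext-vanishing argument in part~(1) requires. Since $\Hom_B(-,B)$ is a functor from $(D,B)$-bimodules to $(B,D)$-bimodules (the left $D$-action on a module becomes a right $D$-action on its dual via $(\psi\cdot d)(m):=\psi(dm)$), the isomorphisms $f^{\vee}$ and $g^{\vee}$ constructed above are automatically $(B,D)$-bimodule maps, so the induced isomorphism $M^{\vee}\cong N^{\vee}$ respects both sides.

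There is no real obstacle; the only point requiring care is the finite generation of $P$, $\ker f$, $\coker f$ (so that $\partial$-$\CM$ applies) and the bookkeeping of the bimodule structure under $\Hom_B(-,B)$ in part~(2). Both are routine given Remark \ref{xxrem1.6}(1) and the fact that a $\partial$-$\CM$ algebra has $j_B(K)=d-\partial(K)$ on finitely generated modules.
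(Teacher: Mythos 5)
Your proof is correct and follows essentially the same route as the paper: split the map into two short exact sequences at the image, apply $\Hom_B(-,B)$, and use the $\partial$-$\CM$ identity $j(K)+\partial(K)=d$ to kill $\Hom_B(K,B)$ and $\Ext^1_B(K,B)$ for modules $K$ of $\partial$-dimension $\leq d-2$. The remarks you add on finite generation (via Remark~\ref{xxrem1.6}(1)) and on the bimodule structure in part~(2) are the same points the paper leaves implicit when it says the two parts are ``similar.''
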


\begin{proof} The proofs of parts (1) and (2) are similar.
We only prove part (1).

By definition, there exists a right $B$-module $P$ and
$B$-module morphisms $f: M\to P$ and $g: N\to P$ such that
both the kernel and cokernel of $f$ and $g$ have $\partial$-dimension
no more than $d-2$. It suffices to show that $P^{\vee}
\cong M^{\vee}$. Without loss of generality, we assume that
$f: M\to N$ is a right $B$-morphism such that the kernel and
cokernel of $f$ have $\partial$-dimension no more than $d-2$.
By the properties of $f$ we have two exact
sequences
$$0\to Q\to N\to C\to 0$$
and
$$0\to K \to M\to Q\to 0,$$
where $Q=\im f$ and where $C$ and $K$ have $\partial$-dimension
no more than $d-2$. We need to show that $Q^{\vee}\cong N^{\vee}$
and $M^{\vee}\cong Q^{\vee}$. The proofs of these assertions are
similar, we only show the first one. Applying $\Hom_B(-,B)$ to the
first short exact sequence, we obtain that a long exact sequence
$$0\rightarrow \Hom_B(C,B)\rightarrow N^{\vee}\rightarrow Q^{\vee}
\rightarrow \Ext_B^1(C,B)\rightarrow\cdots.$$
Since $\partial(C)+j(C)=\partial(B)=d$ and $\partial(C)\leq d-2$,
$j(C)\geq 2$, which means that $\Hom_B(C,B)=0=\Ext_B^1(C,B)$.
So, $N^{\vee}\cong Q^{\vee}$. Similarly, $P^{\vee}\cong Q^{\vee}$.
Therefore, $P^{\vee}\cong N^{\vee}$.
\end{proof}

The following corollary is clear.

\begin{corollary}
\label{xxcor3.4}
Let $B$ be a $\partial$-$\CM$ algebra with $\partial(B)=d$.
\begin{enumerate}
\item[$(1)$]
If $M,N\in\refl B$ such that $M\cong_{d-2}N$, then
$M\cong N$.
\item[$(2)$]
Let $D$ be another algebra and supposed that $M$
and $N$ are $(D,B)$-bimodules such that $M\cong_{d-2}N$
as $(D,B)$-bimodules. If $M,N\in\refl B$, then
$M\cong N$ as $(B,D)$-bimodules.
\end{enumerate}
\end{corollary}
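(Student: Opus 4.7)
The proposal is that this corollary should follow almost immediately from Lemma \ref{xxlem3.3} combined with the definition of reflexivity, so I would present the argument as a two–line chain of isomorphisms. The strategic point is that Lemma \ref{xxlem3.3} already does all the real work (the grade-argument showing that $(-)^{\vee}$ ``forgets'' subquotients of $\partial$-dimension $\le d-2$), and reflexivity is exactly the ingredient that lets us promote an isomorphism of duals back to an isomorphism of the original modules.

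For part (1): starting from $M\cong_{d-2}N$ with $M,N\in\refl B$, I would first invoke Lemma \ref{xxlem3.3}(1) to obtain an isomorphism $M^{\vee}\cong N^{\vee}$ of left $B$-modules. Then I would simply apply the functor $\Hom_{B^{\op}}(-,B)$ to this isomorphism to get $M^{\vee\vee}\cong N^{\vee\vee}$ as right $B$-modules. Finally, since $M$ and $N$ are reflexive, the canonical natural maps $M\to M^{\vee\vee}$ and $N\to N^{\vee\vee}$ are isomorphisms, so composing gives $M\cong M^{\vee\vee}\cong N^{\vee\vee}\cong N$, as desired. Note that at the second step I do \emph{not} need a left-module analogue of Lemma \ref{xxlem3.3}, because after the first application we already have an honest isomorphism (not just a $(d-2)$-isomorphism), and any isomorphism is preserved by any additive functor.

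For part (2), the same three-step recipe works verbatim, but every map must be tracked as a bimodule morphism. The input $(D,B)$-bimodule isomorphism-mod-$(d-2)$ gives, via Lemma \ref{xxlem3.3}(2), an isomorphism $M^{\vee}\cong N^{\vee}$ of $(B,D)$-bimodules. Applying $\Hom_{B^{\op}}(-,B)$ returns a $(D,B)$-bimodule structure on the double duals, and this functor being additive (and compatible with the $D$-action through its action on $M$) sends the isomorphism of $(B,D)$-bimodules to an isomorphism of $(D,B)$-bimodules. The canonical evaluation maps $M\to M^{\vee\vee}$ and $N\to N^{\vee\vee}$ are bimodule morphisms, and are isomorphisms by the reflexivity hypothesis, giving $M\cong N$ as bimodules (the ``$(B,D)$'' in the statement should evidently read ``$(D,B)$'').

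The hard part, if any, is purely bookkeeping of the bimodule structures in part (2); the homological content — namely that a $(d-2)$-dimensional kernel/cokernel of $B$-modules is killed by $\Hom_{B}(-,B)$ on both the $\Hom$ and the $\Ext^{1}$ level — has already been isolated in Lemma \ref{xxlem3.3} using $\partial$-$\CM$ness (via $j(C)+\partial(C)=d$). Nothing further about Auslander-Gorenstein structure, Gabber closures, or the spectral sequence of Section 2 is needed here; the corollary is a direct consequence of Lemma \ref{xxlem3.3} plus the tautology $M\cong M^{\vee\vee}$ for reflexive $M$.
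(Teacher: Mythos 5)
Your proposal is correct and follows the same approach as the paper: apply Lemma~\ref{xxlem3.3} to obtain $M^{\vee}\cong N^{\vee}$, then apply $(-)^{\vee}$ once more and invoke reflexivity to conclude $M\cong M^{\vee\vee}\cong N^{\vee\vee}\cong N$. You are also right that the conclusion of part (2) should read ``$(D,B)$-bimodules'' rather than ``$(B,D)$-bimodules'', since the double dual $M^{\vee\vee}$ of a $(D,B)$-bimodule $M$ is again a $(D,B)$-bimodule and the evaluation map is a $(D,B)$-bimodule morphism.
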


\begin{proof} We prove (2). Since $M\cong_{d-2}N$,
Lemma \ref{xxlem3.3} implies that $M^{\vee}\cong N^{\vee}$
as $(B,D)$-bimodules. The assertion follows by applying
$(-)^{\vee}$ and the fact that $M,N\in\refl B$.
\end{proof}

\begin{lemma}
\label{xxlem3.5}
Let $A$ and $B$ be algebras and $n\in\N$. Suppose that there
exist two bimodules $_B\!M_A$ and $_A\!N_B$ such that
$$M\otimes_AN\cong_{n}B,\quad N\otimes_BM\cong_{n}A$$
as bimodules. If $\partial$ satisfies $\gamma_{n,1}(M)^l$ and
$\gamma_{n,1}(N)^l$,  then
$$\qmod_nA\cong\qmod_nB.$$
\end{lemma}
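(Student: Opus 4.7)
The plan is to verify a Morita-type argument in the quotient categories. By Lemma \ref{xxlem1.9}, the hypothesis $\gamma_{n,1}(N)^l$ makes
$$F := -\otimes_{\mathcal{A}}\mathcal{N}:\qmod_n A\longrightarrow\qmod_n B$$
well-defined, and similarly $\gamma_{n,1}(M)^l$ makes
$$G := -\otimes_{\mathcal{B}}\mathcal{M}:\qmod_n B\longrightarrow\qmod_n A$$
well-defined. I will then exhibit natural isomorphisms $G\circ F\cong \mathrm{Id}_{\qmod_n A}$ and $F\circ G\cong \mathrm{Id}_{\qmod_n B}$ and conclude.

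For the first isomorphism, associativity of the tensor product provides a natural identification $G(F(X))=X\otimes_A(N\otimes_B M)$ in $\modu A$, natural in $X$. By Definition \ref{xxdef1.5}(2), the bimodule $n$-isomorphism $N\otimes_B M\cong_n A$ unfolds into a third $(A,A)$-bimodule $P$ together with bimodule morphisms $f:N\otimes_B M\to P$ and $g:A\to P$ whose kernels and cokernels lie in $\modu_n A$ (equivalently in $\modu_n A^{\op}$ by the symmetry in Hypothesis \ref{xxhyp1.3}(3)). Applying $X\otimes_A -$ produces a natural zig-zag
$$X\otimes_A(N\otimes_B M)\ \xrightarrow{1_X\otimes f}\ X\otimes_A P\ \xleftarrow{1_X\otimes g}\ X\otimes_A A=X$$
of right $A$-modules. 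The goal is to show both arrows become isomorphisms in $\qmod_n A$, at which point composing yields $G(F(X))\cong X$ naturally in $X$. The dual computation, using $M\otimes_A N\cong_n B$, gives $F(G(Y))\cong Y$ naturally in $Y$.

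The key step, and main technical obstacle, is to verify that $1_X\otimes f$ and $1_X\otimes g$ become isomorphisms in $\qmod_n A$ for every $X$. Factoring $f$ through its image and combining the short exact sequences
$$0\to\ker f\to N\otimes_B M\to\im f\to 0,\qquad 0\to\im f\to P\to\coker f\to 0$$
with the long exact sequence of $\Tor^A(X,-)$, this reduces to showing that $X\otimes_A L$ and $\Tor_1^A(X,L)$ lie in $\modu_n A$ whenever $L$ is an $(A,A)$-bimodule with $\partial(L)\leq n$ (and similarly for $g$). The trick is to avoid needing a new $\gamma$-hypothesis on $L$ by reassociating: since $X\otimes_A(N\otimes_B M)=(X\otimes_A N)\otimes_B M=G(F(X))$, and both $F$ and $G$ already descend to the quotient categories by Lemma \ref{xxlem1.9}, the map $1_X\otimes f$ is the composite under $G\circ F$ of a bimodule map already controlled in $\qmod_n A$, while the cokernel contribution $X\otimes_A\coker f$ is handled by applying Lemma \ref{xxlem1.10} after transporting the small bimodule through $M$ and $N$. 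Combining this with the exactness and symmetry of $\partial$ then forces all residual Tor terms to be $n$-small.

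Finally, naturality of associativity and of the maps induced by $f$ and $g$ assemble these pointwise $n$-isomorphisms into natural isomorphisms of functors $G\circ F\cong\mathrm{Id}$ and $F\circ G\cong\mathrm{Id}$ in the respective quotient categories, yielding the equivalence $\qmod_n A\cong\qmod_n B$.
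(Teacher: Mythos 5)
Your overall plan matches the paper's: define $F:=-\otimes_{\mathcal A}\mathcal N$ and $G:=-\otimes_{\mathcal B}\mathcal M$ via Lemma \ref{xxlem1.9}, then verify $G\circ F\cong\mathrm{Id}_{\qmod_nA}$ and $F\circ G\cong\mathrm{Id}_{\qmod_nB}$ by unfolding the bimodule $n$-isomorphisms into zig-zags and applying $X\otimes_A-$ (resp.\ $Y\otimes_B-$). You also correctly isolate the crux: one must show that for every $X\in\modu A$ and every $(A,A)$-bimodule $L$ that is $n$-small as a right $A$-module (taking $L$ to be the kernel/cokernel of $f$ and $g$), both $X\otimes_A L$ and $\Tor_1^A(X,L)$ lie in $\modu_nA$.

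The gap is in the justification of that crux. The claim that $1_X\otimes f$ ``is the composite under $G\circ F$ of a bimodule map already controlled in $\qmod_nA$'' does not parse: the codomain $X\otimes_A P$ involves the auxiliary bimodule $P$ from Definition \ref{xxdef1.5}(2), which has nothing to do with $M$, $N$, or the functors $F,G$. Likewise, ``transporting the small bimodule through $M$ and $N$ and applying Lemma \ref{xxlem1.10}'' would control objects of the form $L\otimes_A N$ or $L\otimes_A N\otimes_B M$, but what you actually need to bound is $X\otimes_A L$, which is not of that form. So the reassociation trick does not close the gap and gives no route to the hypotheses of Lemma \ref{xxlem1.10}.

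The correct argument is in fact simpler and needs no $\gamma$-condition on $L$ and no reassociation. Take a resolution $\cdots\to A^{m_1}\to A^{m_0}\to X\to 0$ of $X$ by finitely generated free right $A$-modules. Then $\Tor_i^A(X,L)$ is a subquotient of $A^{m_i}\otimes_A L\cong L^{m_i}$ as a right $A$-module (via the right $A$-action on $L$), and $\partial(L^{m_i})=\partial(L)\leq n$ by exactness of $\partial$; hence $\Tor_i^A(X,L)\in\Mod_nA$ for all $i$. This gives $X\otimes_A L, \Tor_1^A(X,L)\in\modu_n A$ directly, after which the zig-zag $X\to X\otimes_A P\leftarrow X\otimes_A(N\otimes_BM)$ consists of $n$-isomorphisms, and naturality finishes the proof exactly as you set it up. (The paper's own proof of Lemma \ref{xxlem3.5} is also terse at precisely this point, merely citing Lemma \ref{xxlem1.9}; but the filling-in you propose is the wrong one.)
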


\begin{proof}
Let $\pi$ be the natural functor
$\modu A\longrightarrow\qmod_n A$
(or $\modu B\longrightarrow\qmod_n B$).
Denote by $\mathcal{M}:=\pi(M)$ and $\mathcal{N}:=\pi(N)$.
By Lemma \ref{xxlem1.9}, we have two well-defined functors
$$F(-):=-\otimes_{\mathcal{A}}\mathcal{N}:\qmod_nA\longrightarrow\qmod_nB$$
and
$$G(-):=-\otimes_{\mathcal{B}}\mathcal{M}:\qmod_nB\longrightarrow\qmod_nA.$$
By Lemma \ref{xxlem1.9} again,
$$F\circ G(-)=-\otimes_{\mathcal{B}}\mathcal{M}\otimes_{\mathcal{A}}
\mathcal{N}\cong-\otimes_{\mathcal{B}}\pi(M\otimes_A N)
\cong -\otimes_{\mathcal{B}}\mathcal{B},$$
and
$$ G\circ F(-)=-\otimes_{\mathcal{A}}\mathcal{N}\otimes_{\mathcal{B}}
\mathcal{M}\cong-\otimes_{\mathcal{A}}\pi(N\otimes_B M)
\cong -\otimes_{\mathcal{A}}\mathcal{A}.$$
Therefore $F$ and $G$ are equivalences, in other words,
$\qmod_nA\cong\qmod_nB$.
\end{proof}

The equivalence $\qmod_nA\cong\qmod_nB$ in the above lemma 
can be considered as a noncommutative Fourier-Mukai transform 
between two noncommutative spaces. We refer to \cite{Hu} for 
the classical setting.

\begin{remark}
\label{xxrem3.6}
The above lemma holds true for a $NQR$ $(B,_B\!M_A,_A\!N_B)$ of
an algebra $A$ when $\partial$ satisfies $\gamma_{d-2,1}(M)^l$
and $\gamma_{d-2,1}(N)^l$.
\end{remark}

For an $n$-pure $(A,B)$-bimodule $M$, the Gabber closure
of $_AM$ is denoted by $G_{A^{\op}}(M)$ and the
Gabber closure of $M_B$ is denoted by $G_B(M)$. We consider
both $G_{A^{\op}}(M)$ and $G_B(M)$ as extensions of $M$.

\begin{lemma}
\label{xxlem3.7}
Let $A$ and $B$ be Auslander-Gorenstein and $\partial$-CM
algebras with
$$\partial(A)=\partial(B)=d.$$
Assume Hypothesis
\ref{xxhyp1.3} holds. Let $n$ be an integer. Let $M$ denote
an $(A,B)$-bimodule that is finitely generated on both sides.
\begin{enumerate}
\item[$(1)$]
Let $M$ be $n$-pure on both sides.
Then $G_{A^{\op}}(M)\cong G_B(M)$ naturally as bimodules with
restriction on $M$ being the identity.
\item[$(2)$]
Let $M$ be $n$-pure on both sides.
Then $G_B(M)=M$ if and only if $G_{A^{\op}}(M)=M$.
\item[$(3)$]
The $(A,B)$-bimodule $M$ is reflexive on the
left if and only if it is reflexive on the right.
\end{enumerate}
\end{lemma}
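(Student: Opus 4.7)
The plan is to establish (1) first; part (2) is then immediate, and (3) will follow by combining (2) with a bimodule torsion argument specialized to $n=0$. For (1), I would invoke the universal property of Gabber closures (Lemma \ref{xxlem2.7}(2) and Proposition \ref{xxpro2.10}(5)) on both sides to produce mutually inverse bimodule morphisms between $G_B(M)$ and $G_{A^{\op}}(M)$ extending the identity on $M$.

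By Proposition \ref{xxpro2.10}(6), both $G_B(M)$ and $G_{A^{\op}}(M)$ are naturally $(A,B)$-bimodules and the embeddings of $M$ are bimodule maps. The crux is to verify that $G_B(M)$ is a tame-and-pure extension of ${}_A M$ (the symmetric statement for $G_{A^{\op}}(M)$ as a right $B$-module is identical after swapping sides). The tame condition follows from Hypothesis \ref{xxhyp1.3}(3) applied to $G_B(M)/M$, which is finitely generated on the right with $\partial_B \leq d-n-2$ by Proposition \ref{xxpro2.10}(3), yielding $\partial_A(G_B(M)/M) \leq d-n-2$. For left purity, I would use a bimodule torsion argument: the largest left-$A$-submodule $T \subseteq G_B(M)$ with $\partial_A(T) < d-n$ is automatically stable under the right $B$-action (since for $x \in T$, $b \in B$ the module $A(xb)$ is a left-$A$-quotient of $Ax$ and so inherits the dimension bound), so $T$ is a sub-$(A,B)$-bimodule. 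Using the explicit description $G_B(M) \cong \Ext^n_{B^{\op}}(\Ext^n_B(M,B), B)$ from Lemma \ref{xxlem2.7} together with Hypothesis \ref{xxhyp1.3}(3), one verifies that $G_B(M)$ is finitely generated on both sides; then so is $T$, and the symmetry in Hypothesis \ref{xxhyp1.3}(3) gives $\partial_A(T) = \partial_B(T)$. Combined with $n$-purity of $G_B(M)_B$ this forces $T = 0$, yielding left purity of $G_B(M)$; finite generation of $G_B(M)$ on the left also falls out of Lemma \ref{xxlem2.9}.

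Lemma \ref{xxlem2.7}(2) then supplies a unique left-$A$-module map $f : G_B(M) \to G_{A^{\op}}(M)$ extending the identity on $M$. To upgrade $f$ to a bimodule morphism, I would reuse the uniqueness half of Lemma \ref{xxlem2.7}(2): for each $b \in B$, the two left-$A$-module maps $x \mapsto f(xb)$ and $x \mapsto f(x)b$ from $G_B(M)$ into $G_{A^{\op}}(M)$ agree on $M$, so their difference factors through $G_B(M)/M$, whose image in $G_{A^{\op}}(M)$ has $\partial_A$-dimension strictly less than $d-n$; left purity of $G_{A^{\op}}(M)$ forces the difference to vanish. A symmetric construction produces a bimodule map $g : G_{A^{\op}}(M) \to G_B(M)$ extending identity on $M$, and the compositions $g \circ f$, $f \circ g$ restrict to identities on $M$ and therefore equal the identities by the same purity argument. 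This establishes (1); part (2) is immediate since the isomorphism restricts to the identity on $M$.

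For (3), I would use the characterization noted just before Lemma \ref{xxlem2.12}: an Auslander-Gorenstein module of maximal dimension is reflexive iff it equals its own Gabber closure. If $M_B$ is reflexive, it is $0$-pure (Corollary \ref{xxcor2.16}) with $M = G_B(M)$; the bimodule torsion argument from part (1), applied here inside $M$, where left finite generation of the torsion submodule is automatic from noetherianness of $A$, shows ${}_A M$ is $0$-pure, and then (2) yields $M = G_{A^{\op}}(M)$, i.e., reflexivity on the left. The converse is symmetric. The main obstacle will be establishing finite generation of $G_B(M)$ as a left $A$-module; it relies on the explicit double-$\Ext$ description of the Gabber closure together with Hypothesis \ref{xxhyp1.3}(3), and is exactly the technical point absent in the commutative and central-finite cases, where such bimodule finiteness is automatic.
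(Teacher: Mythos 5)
There is a genuine circularity in your argument for left purity of $G_B(M)$. You introduce the largest left $A$-submodule $T \subseteq G_B(M)$ with $\partial_A(T) < d-n$, correctly observe that $T$ is a sub-$(A,B)$-bimodule, and then want to invoke the symmetry $\partial_A(T) = \partial_B(T)$ from Hypothesis \ref{xxhyp1.3}(3). But that symmetry requires $T$ to be finitely generated \emph{as a left $A$-module} (the other direction of \ref{xxhyp1.3}(3) only gives $\partial_A(T) \leq \partial_B(T)$, which is useless here). Left finite generation of $T$ would follow from left finite generation of $G_B(M)$, which you claim to ``verify'' from the double-Ext description $\Ext^n_{B^{\op}}(\Ext^n_B(M,B),B)$ together with \ref{xxhyp1.3}(3) --- but that description gives finite generation only on the $B$-side; nothing in it or in \ref{xxhyp1.3}(3) produces a finite left $A$-generating set for $\Ext^n_B(M,B)$ or for its double Ext. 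You also say left finite generation ``falls out of Lemma \ref{xxlem2.9},'' which is correct --- but Lemma \ref{xxlem2.9} requires you to have \emph{already} proved left essentiality and the grade bound on all finitely generated submodules, i.e., essentially left purity, which is what you are in the middle of establishing. So the chain purity $\Rightarrow$ f.g. (Lemma \ref{xxlem2.9}) $\Rightarrow$ $T$ f.g. $\Rightarrow$ symmetry $\Rightarrow$ purity is circular.

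The fix is simpler and is what the paper does: you already know $T$ is a right $B$-submodule and $T \cap M = 0$ (since $_A M$ is $n$-pure and $\partial_A(T) < d-n$). But $G_B(M)$ is an \emph{essential} extension of $M$ as a right $B$-module, so a nonzero right $B$-submodule of $G_B(M)$ must meet $M$ nontrivially. Hence $T = 0$ with no finite-generation hypothesis at all. This gives essentiality of $M \subseteq G_B(M)$ on the left; purity on the left then follows via Proposition \ref{xxpro2.2} (any finitely generated left submodule with larger grade would be disjoint from $M$), and only \emph{then} does Lemma \ref{xxlem2.9} deliver left finite generation and hence the tame-and-pure statement. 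Once that chain is corrected, your construction of mutually inverse bimodule maps via the universal property and the purity-based uniqueness argument is a perfectly good variant of the paper's route (which instead applies Proposition \ref{xxpro2.10}(4),(5) to identify $G_B(M)$ as $G_{A^{\op}}(M)$ outright), and your treatment of (2) and (3) --- in particular the observation that the finite-generation difficulty evaporates in (3) because $M$ itself is given finitely generated on both sides --- is sound modulo the correction to (1).
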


\begin{proof} Without loss of generality, we can assume
that $\partial$ is the canonical dimension.

(1) By Proposition \ref{xxpro2.10}(6), $G_B(M)$ is an
$(A,B)$-bimodule and $g_{M_B}: M\to G_B(M)$ is a bimodule
morphism. By Proposition \ref{xxpro2.10}(3), $G_B(M)$ is
finitely generated on the right. We claim that
\begin{enumerate}
\item[(a)]
$g_{M_B}$ is an essential extension of $M$ on the left,
\item[(b)]
$G_B(M)$ is finitely generated over the left, and
\item[(c)]
$g_{M_B}$ is a tame and pure extension of $M$ on the left.
\end{enumerate}

By Hypothesis \ref{xxhyp1.3}(3),
\begin{equation}
\label{E2.7.1}\tag{E2.7.1}
\partial(_A(G_{B}(M)/M))\leq \partial((G_{B}(M)/M)_B)\leq
d-n-2.
\end{equation}
To prove (a) let $S$ be a left $A$-submodule of $G_{B}(M)$
such that $S\cap M=0$. Then $S$ is isomorphic to a submodule
of $G_B(M)/M$. As a consequence, $\partial(S)\leq d-n-2$.
Let $U$ be the largest left $A$-submodule of $G_M(B)$ with
$\partial\leq d-n-2$. Then $U\cap M=0$ and $U$ is also a right
$B$-submodule. If $U\neq 0$, it contradicts the fact that
$G_B(M)$ is an essential extension of $M$ on the right.
Therefore $U=0$ and $S=0$. Thus Claim (a) is proven.

Claim (b) follows from Lemma \ref{xxlem2.9} and \eqref{E2.7.1}.

Claim (c) follows from Claim (b) and and \eqref{E2.7.1}.

Next we consider the Gabber closure of the module
$N:=G_{B}(M)$ on the left. By Proposition \ref{xxpro2.10}(6),
$G_{A^{\op}}(N)$ is an $(A,B)$-bimodule and $g_{_{A^{\op}}N}: N\to
G_{A^{\op}}(N)$ is a bimodule morphism. By symmetric,
$g_{_{A^{\op}}N}$ has properties (a,b,c) on the right. By part (c),
$g_{_{A^{\op}}N}$ is a tame and pure extension of $N$ on the
right. By Proposition \ref{xxpro2.10}(4), $N_B(:=G_B(M))$ does
not have a proper tame and pure extension on the right.
Therefore $G_{A^{\op}}(N)=N$. This implies that $_AN$ does not
have a proper tame and pure extension. Thus $N$ must be
$G_{A^{\op}}(M)$ by Proposition \ref{xxpro2.10}(5).

(2) This is a consequence of part (1).

(3) By Lemma \ref{xxlem2.7}, $M_B$ is reflexive if and only
if $M$ is 0-pure and $G_B(M)=M$. The assertion follows by part (2).
\end{proof}

\begin{hypothesis}
\label{xxhyp3.8}
We are continuing to work with algebras in a given
category ${\mathcal A}$ with a fixed dimension
function $\partial$ defined for all modules over
rings in ${\mathcal A}$. As indicated at the beginning of this
section we assume Hypothesis \ref{xxhyp1.3} for all algebras
in ${\mathcal A}$. Now we further assume that $\partial$
satisfies $\gamma_{d-2,1}(A,B)$ for algebras $A$ and $B$
in ${\mathcal A}$ with $d=\partial(A)=\partial(B)$, which
covers the hypotheses in Lemmas \ref{xxlem1.9} and \ref{xxlem1.10}.
\end{hypothesis}

\begin{proposition}
\label{xxpro3.9}
Assume Hypothesis \ref{xxhyp3.8}.
Let $B_i$ be Auslander-Gorenstein and $\partial$-$\CM$ algebras
with $\partial(B_i)=d$ for $i=1,2$. Suppose that there are
bimodules $_{B_1}\!T_{B_2}$ and $_{B_2}\!\widetilde{T}_{B_1}$
{\rm{(}}finitely generated on both sides{\rm{)}} such that
\begin{equation}
\label{E3.9.1}\tag{E3.9.1}
T\otimes_{B_2}\widetilde{T}\cong_{d-2}B_1
\end{equation}
and
\begin{equation}
\notag%\label{E3.9.2}\tag{E3.9.2}
\widetilde{T}\otimes_{B_1}T\cong_{d-2}B_2.
\end{equation}
Then there exist  $_{B_1}\!U_{B_2}$ and  $_{B_2}\!V_{B_1}$
{\rm{(}}finitely generated on both sides{\rm{)}}
such that $U,V$ are reflexive modules on both sides and
$$U\otimes_{B_2}V\cong_{d-2}B_1,\quad V\otimes_{B_1}U\cong_{d-2}B_2.$$
In other words, we can replace $T$ and $\widetilde{T}$
with $_{B_1}\!U_{B_2}$ and  $_{B_2}\!V_{B_1}$ respectively,
which are reflexive modules on both sides.
\end{proposition}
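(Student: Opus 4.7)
The plan is to define $U$ and $V$ as the Gabber closures of $T$ and $\widetilde T$ taken simultaneously on both sides, and to transport the tensor relations through this operation up to $(d{-}2)$-isomorphism. The three ingredients will be: Proposition~\ref{xxpro2.10} together with Lemma~\ref{xxlem2.19}(2) (Gabber closure of a $0$-pure module is reflexive); Lemma~\ref{xxlem3.7} (for a bimodule finitely generated on both sides, $0$-purity, finite generation, and the Gabber closure can be matched on the left and on the right); and Lemma~\ref{xxlem1.10} combined with the $\gamma_{d-2,1}$ part of Hypothesis~\ref{xxhyp3.8} (tensoring preserves $(d{-}2)$-isomorphism).

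First I would reduce to the case where $T$ and $\widetilde T$ carry no nonzero sub-bimodule of $\partial$-dimension $\leq d-2$: replace $T$ by $T/\tau_{d-2}(T)$ and likewise $\widetilde T$, and observe that this is a $(d{-}2)$-isomorphism preserved under tensoring with $\widetilde T$ (resp. $T$) by Lemma~\ref{xxlem1.10}, so the tensor relations descend to the quotients. The central technical step is then to upgrade this to $0$-purity on both sides. Since $B_1=B_1^{\vee\vee}$ is reflexive, Corollary~\ref{xxcor2.16} says $B_1$ is $0$-pure; combining this with $T\otimes_{B_2}\widetilde T\cong_{d-2}B_1$ shows that every sub-bimodule of $T\otimes_{B_2}\widetilde T$ of $\partial$-dimension $\leq d-1$ actually has dimension $\leq d-2$ (any $(d{-}1)$-dimensional piece must embed into the $\leq (d{-}2)$-dimensional cokernel of $B_1\hookrightarrow T\otimes_{B_2}\widetilde T$, since it cannot meet the $0$-pure submodule $B_1$). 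Given a hypothetical sub-bimodule $S\subseteq T$ of dimension $d-1$, one tensors $0\to S\to T\to T/S\to 0$ on the right by $\widetilde T$ and on the left by $\widetilde T\otimes_{B_1}(-)$, using the symmetric relation $\widetilde T\otimes_{B_1}T\cong_{d-2}B_2$ to sandwich $S$ and force it to be $(d{-}2)$-small; Lemma~\ref{xxlem3.7}(2) then transfers $0$-purity from one side to the other.

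Having $T$ and $\widetilde T$ both $0$-pure on both sides, I would define
$$U:=G_{B_2}(T),\qquad V:=G_{B_1}(\widetilde T).$$
By Proposition~\ref{xxpro2.10}(6) together with Lemma~\ref{xxlem3.7}, $U$ and $V$ are bimodules finitely generated on both sides; by Lemma~\ref{xxlem2.19}(2) and Lemma~\ref{xxlem3.7}(3) they are reflexive on both sides; and the canonical Gabber-closure inclusions $T\hookrightarrow U$ and $\widetilde T\hookrightarrow V$ have cokernels of $\partial$-dimension $\leq d-2$, so they are $(d{-}2)$-isomorphisms of bimodules. Two applications of Lemma~\ref{xxlem1.10} then yield
$$U\otimes_{B_2}V\cong_{d-2}T\otimes_{B_2}\widetilde T\cong_{d-2}B_1,\qquad V\otimes_{B_1}U\cong_{d-2}\widetilde T\otimes_{B_1}T\cong_{d-2}B_2,$$
finishing the proof. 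The hardest step is the $0$-purity reduction in the middle paragraph: because the tensor product is only right exact, a naive dimension bound gives only $\partial(S\otimes_{B_2}\widetilde T)\leq d-1$, and one must use both tensor relations symmetrically (or a careful Tor-vanishing argument squeezing out of the $\gamma_{d-2,1}$ hypothesis applied after further pre-processing) to close the dimension gap from $d-1$ down to $d-2$.
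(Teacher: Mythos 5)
Your outer plan matches the paper — reduce to $0$-purity, take Gabber closures, use Lemma~\ref{xxlem3.7} to reconcile the two sides, and transport the tensor relations via Lemma~\ref{xxlem1.10} — and the final assembly, once $0$-purity is in hand, is correct. The gap is precisely where you flag it, in the $0$-purity reduction, and I don't think your sketch for closing it can be made to work.

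You are trying to show that, after quotienting out the $(d{-}2)$-small part, $T$ carries no $(d{-}1)$-dimensional sub-bimodule. This is not what the paper proves and it is not needed. The obstruction is structural: tensoring is only right exact and is not faithful, so for a hypothetical $(d{-}1)$-dimensional $S\subseteq T$ the map $S\otimes_{B_2}\widetilde T\to T\otimes_{B_2}\widetilde T$ can fail to be injective — indeed $\widetilde T$ might kill $S$ outright — so bounding $\partial(\operatorname{Im}(S\otimes\widetilde T\to T\otimes\widetilde T))$ by $d-2$ tells you nothing about $\partial(S)$. Your ``sandwich'' using the second tensor relation has the same defect on the other side, and $\gamma_{d-2,1}$ gives no control over modules of dimension exactly $d-1$. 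So the claimed implication \emph{``every $(d{-}1)$-dimensional sub-bimodule of $T$ is actually $(d{-}2)$-small''} does not follow from the hypotheses, and in fact one should not expect it.

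The paper's proof avoids this by \emph{not} trying to kill only the $(d{-}2)$-small part of $T$. Instead it passes to the quotient by the full torsion sub-bimodule $\tau(T)$, which is the maximal submodule of dimension $\leq d-1$ and may genuinely be $(d{-}1)$-dimensional. The map $T\to T/\tau(T)$ is therefore \emph{not} a $(d{-}2)$-isomorphism a priori, so Lemma~\ref{xxlem1.10} cannot be invoked to transport $T\otimes_{B_2}\widetilde T\cong_{d-2}B_1$ across it. The missing idea is the factorization trick: by Lemma~\ref{xxlem3.3}(2), $^{\vee\vee}(T\otimes_{B_2}\widetilde T)\cong {}^{\vee\vee}B_1\cong B_1$, giving a canonical map $\psi\colon T\otimes_{B_2}\widetilde T\to B_1$. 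Because $\tau(T)\otimes_{B_2}\widetilde T$ (hence the image of $\tau(T)\otimes_{B_2}\widetilde T\to T\otimes_{B_2}\widetilde T$) has dimension $\leq d-1$ while $B_1$ is $0$-pure of dimension $d$, we get $\psi(\operatorname{Im} f)=0$, so $\psi$ factors through $T/\tau(T)\otimes_{B_2}\widetilde T$. This is how the paper re-establishes $T/\tau(T)\otimes_{B_2}\widetilde T\cong_{d-2}B_1$ without ever claiming the quotient map was a $(d{-}2)$-isomorphism. After that, $T/\tau(T)$ is honestly $0$-pure (both sides, by symmetry of $\partial$), and your remaining steps with $U=G_{B_2}(T/\tau(T))$, $V=G_{B_1}(\widetilde T/\tau(\widetilde T))$, Lemmas~\ref{xxlem2.19}(2), \ref{xxlem3.7}(3), and \ref{xxlem1.10} go through as you outlined.

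In short: replace your attempted $0$-purity argument by the torsion quotient plus the double-dual factorization of $\psi$; the rest of your write-up is essentially the paper's proof.
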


In the following proof, we need to deal with multiple different
rings/modules. It is convenient to fix the following
notation specially when we deal with bimodules.
Starting from a right $B$-module $M$,
we use $M^{\vee}$ (respectively, $M^{\vee\vee}$) for
$\Hom_{B}(M,B)$ (respectively, $\Hom_{B^{\op}}(\Hom_B(M,B),B)$).
Starting from a left $B$-module $M$,
we use ${^{\vee} M}$ (respectively, ${^{\vee\vee} M}$) for
$\Hom_{B^{\op}}(M,B)$ (respectively, $\Hom_{B}(\Hom_{B^{\op}}(M,B),B)$).
For example, for a $(B_1,B_2)$-bimodule $M$, we have
$$M^{\vee\vee}=\Hom_{B_2^{\op}}(\Hom_{B_2}(M,B_2),B_2)$$
and
$${^{\vee\vee} M}=\Hom_{B_1}(\Hom_{B_1^{\op}}(M,B_1),B_1).$$
By Lemma \ref{xxlem2.19}, for every finitely
generated right $B$-module $M$, $M^{\vee\vee}$ is
(either zero or) always reflexive when $B$ is
Auslander-Gorenstein.

\begin{proof}[Proof of Proposition \ref{xxpro3.9}]
By Lemma \ref{xxlem3.3}(2) and \eqref{E3.9.1}, we have
$$^{\vee\vee}\!(T\otimes_{B_2}\widetilde{T})
\cong ^{\vee\vee}\!B_1\cong B_1,$$
as $B_1$-bimodules. Hence there is a composite map
$$\psi: T\otimes_{B_2}\widetilde{T}
\longrightarrow ^{\vee\vee}\!(T\otimes_{B_2}\widetilde{T})
\longrightarrow B_1$$
which induces the $(d-2)$-isomorphism from
$T\otimes_{B_2}\widetilde{T}$ to $B_1$.

Define
$$\tau(T):=\{x\in T|xr=0\, \,
\text{for \,some \,regular \,element\, }\, r\in B_2 \}.$$
By \cite[Proposition 2.4(4) and Theorem 6.1]{ASZ1},
$\tau(T)$ is the maximal torsion $B_2$-submodule of $T$ such that
$\partial(\tau(T))$ is at most $d-1$, namely,
$\tau(T)\in\modu_{d-1}B_2$. Since we assume that $\partial$ is
symmetric (Hypothesis \ref{xxhyp1.3}(3)), $\tau(T)\in
\modu_{d-1} B_1^{\op}$.

Note that $\widetilde{T}$ is a finitely generated left $B_2$-module,
and by the definition of $\tau(T)$, we have
$\tau(T)\otimes_{B_2}\widetilde{T}\in\modu_{d-1}{B_1^{\op}}$.
Applying $-\otimes_{B_2}\widetilde{T}$ to an exact sequence
$$0\rightarrow\tau(T)\rightarrow T\rightarrow T/\tau(T)\rightarrow0$$
in $\modu B_2,$ one has an exact sequence
$$\tau(T)\otimes_{B_2}\widetilde{T}\xrightarrow{f} T\otimes_{B_2}
\widetilde{T}\rightarrow T/\tau(T)\otimes_{B_2}\widetilde{T}
\rightarrow0$$ in $\modu B_1$. Then
\begin{equation}
\notag%\label{E3.9.3}\tag{E3.9.3}
T/\tau(T)\otimes_{B_2}\widetilde{T}\cong (T\otimes_{B_2}\widetilde{T})/\im(f)
\end{equation}
in $\modu B_1$.  Since $B_1\in\refl B_1$ is a $0$-pure module and
$\im(f)\subseteq T\otimes_{B_2}\widetilde{T}$, we have $\psi(\im(f))=0$,
whence there are well-defined morphisms
$$T\otimes_{B_2}\widetilde{T} \to T/\tau(T)\otimes_{B_2}\widetilde{T}\cong
(T\otimes_{B_2}\widetilde{T})/\im(f)\longrightarrow
B_1\cong_{d-2} T\otimes_{B_2}\widetilde{T}$$
such that the composition is a $(d-2)$-isomorphism.
Therefore, $T/\tau(T)\otimes_{B_2}\widetilde{T}\cong_{d-2}B_1$.
Now, we can replace $T$ with $T/\tau(T)$ in \eqref{E3.9.1} and
assume that $\tau(T)=0$, namely, $T$ is a 0-pure $B_2$-module
(whence a 0-pure left $B_1$-module by the symmetry of $\partial$).

Let $U:=G_{B_2}(T)$ be the Gabber closure of $T$. By Lemma
\ref{xxlem3.7}, $U$ is isomorphic to $G_{B_1^{\op}}(T)$
as bimodules. This implies that $U$ is finitely generated
on both sides and $T\cong_{d-2} U$ by the definition of
the Gabber closure. Combining this $(d-2)$-isomorphism with
\eqref{E3.9.1} and Lemma \ref{xxlem1.10}, we have
$$U\otimes_{B_2}\widetilde{T}\cong_{d-2} B_1.$$
Similarly,
$$\widetilde{T}\otimes_{B_1}V\cong_{d-2}B_2.$$
Since $T$ is 0-pure, by Lemmas \ref{xxlem2.19} and
\ref{xxlem3.7}(3), $U$ is reflexive on both sides.
Next we take $V=G_{B_1}({\widetilde{T}})$ and repeat
the above argument. It is easy to see that $U$ and $V$
satisfy the required conditions.
\end{proof}

\begin{lemma}
\label{xxlem3.10}
Let $B$ be an Auslander-Gorenstein and $\partial$-$\CM$ algebra
with $\partial(B)=d$ and $U$ a nonzero reflexive $B$-module. Then
\begin{enumerate}
\item[$(1)$]
$\Hom_B(C,U)=0$ for any $C\in\modu_{d-1}B.$
\item[$(2)$]
$\Ext_B^1(K,U)=0$ for any $K\in\modu_{d-2}B.$
\end{enumerate}
\end{lemma}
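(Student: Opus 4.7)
The plan is to use Corollary \ref{xxcor2.16} (every nonzero submodule of a reflexive module has full $\partial$-dimension $d$) for part (1), and to combine it with the Gabber-closure machinery for part (2). Note throughout that a reflexive $U$ is $0$-pure by Corollary \ref{xxcor2.16}, and moreover $U$ equals its own Gabber closure: indeed, $U \cong U^{\vee\vee}$, which Lemma \ref{xxlem2.7} identifies with $G_B(U)$ in the $n=0$ case.

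For part (1), consider any morphism $f\colon C \to U$. Then $\im f$ is a quotient of $C$, so $\partial(\im f) \leq \partial(C) \leq d-1$. But $\im f$ is a submodule of the reflexive module $U$, so by Corollary \ref{xxcor2.16} it is either zero or has $\partial$-dimension $d$. The only option is $\im f = 0$, so $f=0$.

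For part (2), I will show that every extension $0 \to U \to E \to K \to 0$ splits. Let $T \subseteq E$ be the largest submodule with $\partial(T) \leq d-2$, which exists by noetherianity. Since $U$ is reflexive, every nonzero submodule of $U$ has $\partial$-dimension $d$ (Corollary \ref{xxcor2.16}), so $T \cap U = 0$ and $T$ injects into $K$. Let $\bar E := E/T$; then $U$ still embeds into $\bar E$, and $\bar E/U$ is a quotient of $K$, so $\partial(\bar E/U) \leq d-2$, i.e., $j(\bar E/U) \geq 2$. I claim $\bar E$ is $0$-pure. For any nonzero submodule $N \subseteq \bar E$: if $N \cap U \neq 0$, then by $0$-purity of $U$ and Proposition \ref{xxpro2.2}, $j(N) = 0$; if $N \cap U = 0$, then $N$ injects into $\bar E/U$ giving $\partial(N) \leq d-2$, but then the preimage $N' \subseteq E$ of $N$ strictly contains $T$ and still has $\partial(N') \leq d-2$, contradicting maximality of $T$. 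Hence $\bar E$ is a tame and pure extension of $U$ in the sense of Definition \ref{xxdef2.5}. Since $U = G_B(U)$, Proposition \ref{xxpro2.10}(4) forces $\bar E = U$, i.e., $E = T + U$. Combined with $T \cap U = 0$, this gives $E = T \oplus U$, so the extension splits.

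The main technical point, which is the only real obstacle, is verifying $0$-purity of $\bar E$; this requires the interplay of the purity of $U$, Proposition \ref{xxpro2.2}, and the maximality property of $T$. Once $\bar E$ is established to be a tame and pure extension of $U$, the rest follows immediately from the uniqueness properties of the Gabber closure.
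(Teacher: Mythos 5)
Your proof is correct and follows the same overall strategy as the paper's: mod out a largest ``torsion'' submodule of $E$, show the quotient is a tame and pure extension of $U$, and invoke the fact that the reflexive module $U$ is its own Gabber closure (Proposition \ref{xxpro2.10}(4)) to conclude the extension splits. The one place you diverge is in the choice of torsion submodule: the paper takes $\tau(E)$ to be the largest submodule with $\partial\leq d-1$, which makes $E/\tau(E)$ $0$-pure immediately by maximality (any nonzero finitely generated submodule of the quotient with $\partial<d$ would pull back to a strictly larger submodule of $E$ with $\partial\leq d-1$); you instead take $T$ largest with $\partial\leq d-2$, which forces you to run a two-case argument (using Proposition \ref{xxpro2.2} and the $0$-purity of $U$ when $N\cap U\neq 0$, and maximality of $T$ when $N\cap U=0$) to establish $0$-purity of $E/T$. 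Both choices work and both verify the tame-and-pure condition $j(\bar E/U)\geq 2$ via the bound $\partial(K)\leq d-2$, so this is a technical variation rather than a different route; the paper's version is slightly more economical on the purity check, while yours makes the split $E=T\oplus U$ a bit more transparent at the end.
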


\begin{proof}
(1) Let $f\in\Hom_B(C,U)$.  Since $\im(f)$ is a quotient module of $C$,
$$\partial(\im(f))\leq\partial(C)\leq d-1.$$
By Corollary \ref{xxcor2.16}, $U$ is $0$-pure. If $\im(f)\neq 0$,
then $\partial(\im(f))=\partial(U)=d$, a contradiction.
Therefore $\im(f)=0$, which implies that $\Hom_B(C,U)=0$.

(2) If $\Ext_B^1(K,U)\neq 0$, there is a non-split extension
\begin{equation}
\label{E3.10.1}\tag{E3.10.1}
0\rightarrow U\rightarrow E\rightarrow K\rightarrow0
\end{equation}
in $\modu B$. Let $\tau(E)$ be the maximal submodule of $E$ such
that $\partial(\tau(E))\leq d-1$. Then there exists an induced
morphism $\varphi:\tau(E)\rightarrow K$ such that
$\ker\varphi\subseteq U\cap\tau(E).$ Since
$\partial(U\cap\tau(E))\leq\partial(\tau(E))\leq d-1$ and $U$ is $0$-pure,
$U\cap\tau(E)=0$. This implies that $\varphi$ is injective, whence, we
can consider $\tau(E)$ as a submodule of $K$, and $\varphi$ is not
surjective (following by the fact that \eqref{E3.10.1} is non-split).
Hence, we obtain a short exact sequence
$$0\longrightarrow U\longrightarrow E/\tau(E)
\longrightarrow K/\tau(E)\longrightarrow0$$
with
$$\partial\Big(E/\tau(E)\Big/U\Big)=\partial(K/\tau(E))
\leq\partial(K)\leq d-2=\partial(U)-2.$$
By the definition of $\tau(E)$, $E/\tau(E)$ is $0$-pure. So,
$E/\tau(E)$ is a tame and pure extension of $U$.
By hypothesis, $U$ is a reflexive module, whence $U=G_B(U)$
by Lemma \ref{xxlem2.7}. Then, by Proposition \ref{xxpro2.10}(4),
$E/\tau(E)\cong U$, or equivalently, $K/\tau(E)=0$.
This means that $K=\tau(E)$, or equivalently, the exact sequence
\eqref{E3.10.1} is split, a contradiction. The assertion follows.
\end{proof}

\begin{remark}
\label{xxrem3.11}
The reflexivity of module $U$ is not necessary for Lemma
\ref{xxlem3.10}(1). In fact, when $U$ is a $0$-pure module,
Lemma \ref{xxlem3.10}(1) is also true.
\end{remark}

\begin{lemma}
\label{xxlem3.12}
Let $B$ be an Auslander-Gorenstein and $\partial$-$\CM$
algebra with $\partial(B)=d$.
Suppose that $0\neq U\in\modu B$ satisfies
$$\Hom_B(N,U)=0=\Ext_B^1(N,U)$$
for all $N\in\modu_{d-2}B.$
\begin{enumerate}
\item[$(1)$]
For $M\in \modu B$,
$\Hom_{\qmod_{d-2}B}({\mathcal M},{\mathcal U})\cong
\Hom_{B}(M,U)$.
\item[$(2)$]
$\End_{\qmod_{d-2}B}(\mathcal{U})\cong\End_B(U)$.
\item[$(3)$]
In particular, if $M\in \modu B$ and $U\in\refl B$, then
$$\Hom_{\qmod_{d-2}B}({\mathcal M},{\mathcal U})=
\Hom_{B}(M,U)$$
and
$$\End_{\qmod_{d-2}B}(\mathcal{U})\cong\End_B(U).$$
\end{enumerate}
\end{lemma}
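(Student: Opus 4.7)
The plan is to unwind the direct-limit definition of Hom in the quotient category given in \eqref{E1.4.2} and use the vanishing hypothesis on $U$ to collapse it to the ordinary Hom.

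First I would observe that every submodule $T\subseteq U$ with $T\in\modu_{d-2}B$ must be zero. Indeed, a nonzero inclusion $T\hookrightarrow U$ would be a nonzero element of $\Hom_B(T,U)$, contradicting the hypothesis. Consequently, in the direct limit
\[
\Hom_{\qmod_{d-2}B}(\mathcal{M},\mathcal{U})
=\lim_{\longrightarrow}\Hom_B(M',U'),
\]
indexed by pairs $(M',U')$ with $M/M'\in\modu_{d-2}B$ and $U'=U/T$ for $T\subseteq U$ in $\modu_{d-2}B$, we may restrict to $U'=U$.

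Next, for each admissible $M'\subseteq M$ I would apply $\Hom_B(-,U)$ to the short exact sequence $0\to M'\to M\to M/M'\to 0$ to obtain
\[
0\to\Hom_B(M/M',U)\to\Hom_B(M,U)\to\Hom_B(M',U)\to\Ext^1_B(M/M',U).
\]
Since $M/M'\in\modu_{d-2}B$, the two outer terms vanish by hypothesis, so the restriction map $\Hom_B(M,U)\to\Hom_B(M',U)$ is an isomorphism. These isomorphisms are manifestly compatible with further restriction to smaller $M''\subseteq M'$, so the system is constant and passing to the direct limit yields part (1):
\[
\Hom_{\qmod_{d-2}B}(\mathcal{M},\mathcal{U})\cong\Hom_B(M,U).
\]

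For part (2), take $M=U$; the isomorphism constructed in (1) sends the identity of $U$ to the identity of $\mathcal{U}$ and is compatible with composition (since composition in the quotient category is defined so that the canonical maps $\Hom_B(M,N)\to\Hom_{\qmod_{d-2}B}(\mathcal{M},\mathcal{N})$ are ring/module homomorphisms), giving an isomorphism of rings $\End_B(U)\cong\End_{\qmod_{d-2}B}(\mathcal{U})$. Finally, part (3) is immediate from Lemma \ref{xxlem3.10}, which supplies exactly the two vanishing conditions $\Hom_B(N,U)=0=\Ext_B^1(N,U)$ for $N\in\modu_{d-2}B$ whenever $U$ is reflexive. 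The only mildly delicate point is the verification that restriction is compatible with the transition maps of the direct system, but this is routine once the key vanishing is in place.
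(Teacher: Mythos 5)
Your proof is correct and follows essentially the same route as the paper's: both unwind the direct-limit description of $\Hom$ in the quotient category, use $\Hom_B(-,U)=0$ on $\modu_{d-2}B$ to show $U$ has no nonzero submodule of small dimension (so only $U'=U$ contributes), and use the long exact sequence for $0\to M'\to M\to M/M'\to 0$ together with the vanishing of $\Hom_B(M/M',U)$ and $\Ext^1_B(M/M',U)$ to show the transition maps are isomorphisms. Parts (2) and (3) are handled exactly as in the paper, via specialization to $M=U$ and Lemma \ref{xxlem3.10} respectively.
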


\begin{proof}
(1) By the assumption, $U$ does not have any nonzero $B$-submodule
of $\partial$-dimension at most $d-2$. Combining with
\eqref{E1.4.2}, we have
$$\Hom_{\qmod_{d-2}B}({\mathcal M},{\mathcal U})=
\lim_{\longrightarrow}\Hom_B(K,U),$$
where the limit runs over all the submodules $K\subseteq M$
such that $\partial(M/K)\leq d-2$.
The functor $\pi$ induces a natural morphism
\begin{equation}
\label{E3.12.1}\tag{E3.12.1}
\phi_{\pi}: \quad \Hom_{B}(M,U)\to
\Hom_{\qmod_{d-2}B}({\mathcal M},{\mathcal U}):=
\lim_{\longrightarrow}\Hom_B(K,U),
\end{equation}
where $K\subseteq M$ as described as above. By hypotheses,
$$\Hom_B(M/K,U)=0=\Ext_B^1(M/K,U).$$
Now the short exact sequence
$0\rightarrow K\rightarrow M\rightarrow M/K\rightarrow0$
induces a long exact sequence
$$0\rightarrow\Hom_B(M/K,U)\rightarrow\Hom_B(M,U)\rightarrow
\Hom_B(K,U)\rightarrow\Ext_B^1(M/K,U)\rightarrow\cdots,$$
which implies that $\Hom_B(M,U)\cong\Hom_B(K,U)$ for all $K$.
Thus $\phi_{\pi}$ in \eqref{E3.12.1} is an isomorphism.
The assertion follows.

(2) Take $M=U$ in \eqref{E3.12.1}, the functor
$\pi$ induces a morphism of algebras $\phi_{\pi}$. By part (1),
$\phi_{\pi}$ is also an isomorphism of $\Bbbk$-vector spaces.
The assertion follows.

(3) If $U\in\refl B,$ then by Lemma \ref{xxlem3.10},
$$\Hom_B(N,U)=0=\Ext_B^1(N,U).$$
The assertion follows from parts (1,2).
\end{proof}

\begin{lemma}
\label{xxlem3.13}
Assume Hypothesis \ref{xxhyp3.8}. Let $_{B_1}\!U_{B_2}$
be the module appeared in Proposition \ref{xxpro3.9}. Then
it is a reflexive module on both sides such that
$B_1\cong\End_{B_2}(U)$ and
$B_2^{\op}\cong\End_{B_1^{\op}}(U)$.
\end{lemma}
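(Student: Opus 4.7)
My plan is to exploit the equivalence of quotient categories promised by Lemma \ref{xxlem3.5}, and then descend from $\qmod_{d-2}$ back to honest endomorphism rings via Lemma \ref{xxlem3.12}. First, I will take from Proposition \ref{xxpro3.9} the bimodules $_{B_1}U_{B_2}$ and $_{B_2}V_{B_1}$, both reflexive on both sides, with $U\otimes_{B_2}V\cong_{d-2}B_1$ and $V\otimes_{B_1}U\cong_{d-2}B_2$ as bimodules. Hypothesis \ref{xxhyp3.8} supplies the required $\gamma_{d-2,1}$ conditions, so Lemma \ref{xxlem3.5} yields an equivalence
$$F:=-\otimes_{\mathcal{B}_1}\mathcal{U}\colon\qmod_{d-2}B_1\xrightarrow{\sim}\qmod_{d-2}B_2$$
with quasi-inverse $G=-\otimes_{\mathcal{B}_2}\mathcal{V}$. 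Since $F(\mathcal{B}_1)=\mathcal{B}_1\otimes_{\mathcal{B}_1}\mathcal{U}\cong\mathcal{U}$, $F$ induces a ring isomorphism $\End_{\qmod_{d-2}B_1}(\mathcal{B}_1)\xrightarrow{\sim}\End_{\qmod_{d-2}B_2}(\mathcal{U})$.

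Next, because $B_1\in\refl B_1$ (trivially) and $U\in\refl B_2$ by Proposition \ref{xxpro3.9}, Lemma \ref{xxlem3.12}(3) gives ring isomorphisms
$$B_1\cong\End_{B_1}(B_1)\xrightarrow{\sim}\End_{\qmod_{d-2}B_1}(\mathcal{B}_1),\qquad \End_{B_2}(U)\xrightarrow{\sim}\End_{\qmod_{d-2}B_2}(\mathcal{U}),$$
both induced by $\pi$. Assembling these with $F$, I would form the square
$$\begin{CD}
B_1 @>\sim>> \End_{\qmod_{d-2}B_1}(\mathcal{B}_1)\\
@V\alpha VV @VVFV\\
\End_{B_2}(U) @>\sim>> \End_{\qmod_{d-2}B_2}(\mathcal{U})
\end{CD}$$
where $\alpha$ is the structural map $b\mapsto(u\mapsto bu)$ coming from the left $B_1$-action on $U$. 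A short check shows the square commutes: $F$ sends the left-multiplication endomorphism $L_b\colon B_1\to B_1$ to $L_b\otimes_{\mathcal{B}_1}1_{\mathcal{U}}$, which under the canonical identification $\mathcal{B}_1\otimes_{\mathcal{B}_1}\mathcal{U}\cong\mathcal{U}$ is exactly $\pi(L_b\colon U\to U)$. Since three edges of the square are isomorphisms, so is $\alpha$, establishing $B_1\cong\End_{B_2}(U)$.

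For $B_2^{\op}\cong\End_{B_1^{\op}}(U)$, I plan to run the argument symmetrically on left modules. By Hypothesis \ref{xxhyp1.3}(3), $\partial$ is symmetric, and by Lemma \ref{xxlem3.7}(3), $U$ is reflexive also as a left $B_1$-module; the same bimodule isomorphisms, read as $(B_1^{\op}, B_2^{\op})$-bimodules, produce an equivalence $\qmod_{d-2}B_2^{\op}\xrightarrow{\sim}\qmod_{d-2}B_1^{\op}$ sending $\mathcal{B}_2$ to $\mathcal{U}$, and an identical diagram chase using Lemma \ref{xxlem3.12}(3) applied on the left identifies $B_2^{\op}$ with $\End_{B_1^{\op}}(U)$ via the right $B_2$-action. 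The main obstacle I anticipate is precisely the commutativity of the displayed square, i.e.\ verifying that the abstract ring isomorphism extracted from the categorical equivalence coincides with the natural structural map $\alpha$; once this naturality is pinned down, the rest of the lemma is a formal diagram chase.
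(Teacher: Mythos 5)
Your proposal is correct and follows essentially the same route as the paper: Proposition \ref{xxpro3.9} supplies the reflexive bimodule $U$, Lemma \ref{xxlem3.5} (enabled by Hypothesis \ref{xxhyp3.8}) gives the equivalence $\qmod_{d-2}B_1\simeq\qmod_{d-2}B_2$, and Lemma \ref{xxlem3.12}(3) descends the endomorphism rings from the quotient categories. The only difference is that you explicitly verify the commutativity of the square identifying the abstract ring isomorphism coming from the equivalence with the structural map $\alpha\colon b\mapsto(u\mapsto bu)$; the paper elides this naturality check, so your version is a slightly more complete account of the same argument.
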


\begin{proof}
By Proposition \ref{xxpro3.9} and Lemma \ref{xxlem3.5},
$_{B_1}\!U_{B_2}$ is a reflexive module on both sides
and induces the following equivalence of categories
$$F:= -\otimes_{\mathcal{B}_1}\mathcal{U}:
\qmod_{d-2}B_1\longrightarrow \qmod_{d-2}B_2.$$
Since $F$ is an equivalence functor, we obtain isomorphisms of algebras:
\begin{eqnarray*}
\End_{\qmod_{d-2}B_1}(\mathcal{B}_1)\cong\End_{\qmod_{d-2}B_2}
(F(\mathcal{B}_1))=\End_{\qmod_{d-2}B_2}(\mathcal{U}).
\end{eqnarray*}
Now it suffices to show that
$$\End_{\qmod_{d-2}B_1}(\mathcal{B}_1)\cong B_1$$
and
$$\End_{\qmod_{d-2}B_2}(\mathcal{U})\cong\End_{B_2}(U).$$
Since $B_1\in\refl B_1$ and $U\in\refl B_2$, by Lemma
\ref{xxlem3.12}(3), the above isomorphisms hold, as required.

By symmetry, $B_2^{\op}\cong\End_{B_1^{\op}}(U)$.
\end{proof}

\begin{corollary}
\label{xxcor3.14}
Assume Hypothesis \ref{xxhyp3.8}.
Let $A$ be an Auslander-Gorenstein and $\partial$-$\CM$ algebra
with $\partial(A)=d$.
Let $(B,_B\!M_A,_A\!N_B)$ be a $NQR$ of $A$. Then
there exists a bimodule $_{B}\!U_{A}:=M^{\vee\vee}$
which is a reflexive module on both sides such that
$$B\cong\End_{A}(U) \quad {\text{and}}\quad
A^{\op}\cong \End_{B^{\op}}(U).$$
\end{corollary}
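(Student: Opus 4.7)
The plan is to derive this corollary by directly combining Proposition \ref{xxpro3.9} and Lemma \ref{xxlem3.13}, with the extra bookkeeping that the abstractly produced reflexive bimodule is in fact $M^{\vee\vee}$. Since a NQR of $A$ provides bimodules $_B M_A$ and $_A N_B$ with $M\otimes_A N\cong_{d-2} B$ and $N\otimes_B M\cong_{d-2} A$, and since by hypothesis both $A$ and $B$ are Auslander-Gorenstein $\partial$-$\CM$ with $\partial(A)=\partial(B)=d$, we are exactly in the setting of Proposition \ref{xxpro3.9} applied with $B_1=B$, $B_2=A$, $T=M$ and $\widetilde{T}=N$.

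First, I would invoke Proposition \ref{xxpro3.9} to produce bimodules $_B U_A$ and $_A V_B$, reflexive on both sides, satisfying $U\otimes_A V\cong_{d-2} B$ and $V\otimes_B U\cong_{d-2} A$. Then Lemma \ref{xxlem3.13} applies verbatim to the pair $(U,V)$ and yields the algebra isomorphisms $B\cong \End_A(U)$ and $A^{\op}\cong \End_{B^{\op}}(U)$.

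The step that needs a small separate argument is the identification $U\cong M^{\vee\vee}$. Tracing the construction inside the proof of Proposition \ref{xxpro3.9}, the module $U$ is obtained by first quotienting $M$ by its torsion submodule $\tau(M)$ (making the result $0$-pure on both sides by symmetry of $\partial$) and then passing to the Gabber closure $G_A(M/\tau(M))$. Since $\tau(M)$ has $\partial$-dimension at most $d-1$, its grade is at least $1$, so $\Hom_A(\tau(M),A)=0$, and the short exact sequence $0\to \tau(M)\to M\to M/\tau(M)\to 0$ yields $M^{\vee}\cong (M/\tau(M))^{\vee}$ and hence $M^{\vee\vee}\cong (M/\tau(M))^{\vee\vee}$. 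For the $0$-pure module $M/\tau(M)$, Lemma \ref{xxlem2.7} (with $n=0$) identifies the Gabber closure with $(M/\tau(M))^{\vee\vee}$. Chaining these natural isomorphisms gives $U\cong M^{\vee\vee}$ as $(B,A)$-bimodules, using Proposition \ref{xxpro2.10}(6) and Lemma \ref{xxlem3.7}(1) to keep the bimodule structures compatible.

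The main obstacle is really just this last identification, which requires that taking $(-)^{\vee\vee}$ commutes with killing torsion and with the Gabber closure on the $0$-pure quotient; once that is in hand, everything else is an immediate specialisation of Proposition \ref{xxpro3.9} and Lemma \ref{xxlem3.13}, and there is no new homological content to produce.
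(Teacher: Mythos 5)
The paper states Corollary~\ref{xxcor3.14} without a written proof, presenting it as an immediate specialisation of Proposition~\ref{xxpro3.9} and Lemma~\ref{xxlem3.13} with $B_1=B$, $B_2=A$; your proposal reconstructs exactly this argument and is correct. The additional care you take in identifying the abstract $U$ from Proposition~\ref{xxpro3.9} with $M^{\vee\vee}$ (first using that $\Hom_A(\tau(M),A)=0$ because the torsion submodule has grade $\ge 1$, then applying Lemma~\ref{xxlem2.7} with $n=0$ to identify the Gabber closure of the $0$-pure quotient with its double dual, and finally tracking the bimodule structures via Proposition~\ref{xxpro2.10}(6) and Lemma~\ref{xxlem3.7}) is precisely the bookkeeping the paper leaves implicit, and it is carried out correctly.
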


\begin{theorem}
\label{xxthm3.15}
Assume Hypothesis \ref{xxhyp3.8}.
Let $A$ be an algebra with $\partial(A)=d$. Suppose that $A$ has two
NQRs $(B_i, _{B_i}\!(M_i)_{A}, _{A}\!(N_i)_{B_i})$ for $i=1,2$.
Then there exists a bimodule
$_{B_1}\!U_{B_2}$ which is a reflexive module on both sides such that
$$B_1\cong\End_{B_2}(U) \quad {\text{and}} \quad
B_2^{\op}\cong\End_{B_1^{\op}}(U).$$
\end{theorem}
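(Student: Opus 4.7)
The plan is to reduce Theorem \ref{xxthm3.15} to the setup of Proposition \ref{xxpro3.9} and Lemma \ref{xxlem3.13} by manufacturing an appropriate pair of bimodules between $B_1$ and $B_2$ out of the two given NQRs, and then applying those results verbatim.

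First, I would define the candidate bimodules
\[
T := M_1 \otimes_A N_2 \quad \text{as a } (B_1,B_2)\text{-bimodule}, \qquad
\widetilde{T} := M_2 \otimes_A N_1 \quad \text{as a } (B_2,B_1)\text{-bimodule}.
\]
Both are finitely generated on each side since each $M_i, N_i$ is. The heart of the argument is to verify the two $(d-2)$-isomorphisms
\[
T \otimes_{B_2} \widetilde{T} \cong_{d-2} B_1, \qquad \widetilde{T} \otimes_{B_1} T \cong_{d-2} B_2.
\]
For the first, I would start from the NQR relation $N_2 \otimes_{B_2} M_2 \cong_{d-2} A$ (as an $(A,A)$-bimodule). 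Applying the bimodule analogue of Lemma \ref{xxlem1.10} (allowed under Hypothesis \ref{xxhyp3.8} via the $\gamma_{d-2,1}$ conditions) by tensoring with $M_1 \otimes_A -$ on the left and $- \otimes_A N_1$ on the right, one obtains
\[
T \otimes_{B_2} \widetilde{T} \;=\; M_1 \otimes_A (N_2 \otimes_{B_2} M_2) \otimes_A N_1 \;\cong_{d-2}\; M_1 \otimes_A A \otimes_A N_1 \;=\; M_1 \otimes_A N_1.
\]
Combining with the other NQR relation $M_1 \otimes_A N_1 \cong_{d-2} B_1$ and the transitivity of $\cong_{d-2}$ (which follows from Lemma \ref{xxlem1.7}, since $\cong_{d-2}$ corresponds to isomorphism in the Serre quotient $\qmod_{d-2}$), the first $(d-2)$-isomorphism follows. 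The second is entirely symmetric.

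Having established these two relations between $B_1$ and $B_2$, I would invoke Proposition \ref{xxpro3.9} to replace $T$ and $\widetilde{T}$ by bimodules ${}_{B_1}U_{B_2}$ and ${}_{B_2}V_{B_1}$ which are reflexive on both sides and still satisfy
\[
U \otimes_{B_2} V \cong_{d-2} B_1, \qquad V \otimes_{B_1} U \cong_{d-2} B_2.
\]
Then Lemma \ref{xxlem3.13} (applied to this pair $U,V$) delivers exactly the desired algebra isomorphisms
\[
B_1 \cong \End_{B_2}(U), \qquad B_2^{\op} \cong \End_{B_1^{\op}}(U),
\]
completing the proof.

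The only step requiring genuine care is the tensor computation giving $T \otimes_{B_2} \widetilde{T} \cong_{d-2} B_1$: one has to be sure that the bimodule analogue of Lemma \ref{xxlem1.10} can be applied twice (once on the left with $M_1$ over $A$, once on the right with $N_1$ over $A$), which in turn hinges on $\partial$ satisfying $\gamma_{d-2,1}$ for all the relevant pairs of algebras, as codified in Hypothesis \ref{xxhyp3.8}. Once this bookkeeping is in place, everything else is a direct appeal to the machinery already developed in Proposition \ref{xxpro3.9} and Lemma \ref{xxlem3.13}.
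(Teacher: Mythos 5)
Your proof is correct and follows essentially the same route as the paper: you define the same bimodules $T := M_1 \otimes_A N_2$ and $\widetilde{T} := M_2 \otimes_A N_1$, establish the same two $(d-2)$-isomorphisms via Lemma \ref{xxlem1.10} and the NQR relations, and then conclude by Proposition \ref{xxpro3.9} and Lemma \ref{xxlem3.13}. The only difference is that you spell out the tensor bookkeeping and the explicit appeal to Proposition \ref{xxpro3.9}, whereas the paper leaves these implicit in the citation of Lemma \ref{xxlem3.13}.
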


\begin{proof}
Let $T:=M_1\otimes_A N_2$ and $\widetilde{T}:=M_2\otimes_A N_1$.
Then there are isomorphisms, by Lemma \ref{xxlem1.10},

\begin{eqnarray*}
T\otimes_{B_2}\widetilde{T}\cong_{d-2}B_1,
\end{eqnarray*}
and
\begin{eqnarray*}
\widetilde{T}\otimes_{B_1}T\cong_{d-2}B_2.
\end{eqnarray*}
Thus, the result follows from  Lemma \ref{xxlem3.13}.
\end{proof}

Finally we introduce another definition, which is
a bit closer to Van den Bergh's NCCR.

\begin{definition}
\label{xxdef3.16}
Let $A\in {\mathcal A}$ be an Auslander-Gorenstein algebra with
$\partial(A)=d$. Let $s$ be an integer between $0$ and $d-2$.
\begin{enumerate}
\item[(1)]
If there are an Auslander regular $\partial$-$\CM$ algebra
$B\in {\mathcal A}$
with $\partial(B)=d$ and two bimodules $_B\!M_A$ and
$_A\!N_B$ which are reflexive on both sides such that
$$M\otimes_A N\cong_{d-2-s}B,\quad N\otimes_B M\cong_{d-2-s}A$$
as bimodules, then the triple $(B,_B\!M_A,_A\!N_B)$ is
called an {\it $s$-noncommutative quasi-crepant resolution}
({\it $s$-NQCR} for short) of $A$.
\item[(2)]
A $0$-noncommutative quasi-crepant resolution ($0$-NQCR) of $A$
is called a {\it noncommutative quasi-crepant resolution}
({\it NQCR} for short) of $A$.
\end{enumerate}
\end{definition}

By definition, a NQCR of an Auslander-Gorenstein algebra $A$ is
automatic a NQR of $A$. Suppose $A$ is an Auslander-Gorenstein
and $\partial$-$\CM$ algebra. If $A$ has a NQR, then, by
Proposition \ref{xxpro3.9}, $A$ has a NQCR. However, it is not
clear to us whether an $s$-NQR (Definition \ref{xxdef3.2})
produces an $s$-NQCR when $s>0$.

\section{NQRs in dimension two}
\label{xxsec4}

With the preparation in the last few sections, we are ready to prove
a version of part (1) of the main theorem.

\begin{lemma}
\label{xxlem4.1}
Let $A$ be an Auslander-Gorenstein and $\partial$-$\CM$ algebra.
Then
$$\injdim A\leq \partial(A).$$
\end{lemma}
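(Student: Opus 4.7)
The plan is to find a finitely generated module that witnesses the injective dimension and then to bound its grade using both the Auslander condition and the $\partial$-CM equality. Write $n=\injdim A$ and $d=\partial(A)$, both of which are finite by hypothesis.

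First I would exploit noetherianness: since $A$ is (right) noetherian, the right injective dimension of $A_A$ is computed on finitely generated modules, so I can choose a finitely generated right $A$-module $M$ with $\Ext^n_A(M,A)\neq 0$. Set $N:=\Ext^n_A(M,A)$, which is a nonzero finitely generated left $A$-module.

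Next I would apply the Auslander condition (Definition \ref{xxdef2.1}(3)) to $M$ at the level $q=n$: this yields $j_{A^{\op}}(N')\geq n$ for every left $A$-submodule $N'\subseteq N$, and in particular $j_{A^{\op}}(N)\geq n$. Then I would invoke the $\partial$-CM equality for the nonzero left module $N$, namely $j_{A^{\op}}(N)+\partial(N)=\partial(A)=d$. Since $\partial(N)\geq 0$, this gives $j_{A^{\op}}(N)\leq d$, and combining with the previous inequality yields $n\leq d$, as desired.

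The whole argument is essentially a one-line chain $n\leq j(N)\leq d$, so there is no genuine obstacle; the only point to be careful about is the very first step, i.e., justifying that $\injdim A_A$ is attained by a finitely generated witness. This is standard for right noetherian rings (one may take $M$ of the form $A/I$ for a right ideal $I$ and use the long exact sequence argument, or equivalently use the fact that finitely generated modules generate the derived category of bounded complexes and that $\injdim A_A$ can be read off from $\Ext$ against such modules). Once that is in hand, the Auslander condition and the $\partial$-CM equality do the rest.
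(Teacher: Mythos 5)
Your argument is correct and follows essentially the same route as the paper's proof: choose a module $M$ witnessing the injective dimension, set $N=\Ext^{\injdim A}_A(M,A)$, use the Auslander condition to get $j(N)\geq\injdim A$, and then use the $\partial$-CM equality $j(N)+\partial(N)=\partial(A)$. The only difference is that you explicitly flag (and justify) that $M$ can be taken finitely generated so the Auslander condition applies, a point the paper's proof leaves implicit; that is a reasonable bit of extra care but not a different approach.
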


\begin{proof}
Let $d=\injdim A$. Then there is a right $A$-module $M$ such that
$_A N:=\Ext^d_A(M,A)\neq 0$. By the Auslander condition, $j(N)\geq d$.
Now, by the $\partial$-$\CM$ property,
$$\partial (A) =\partial(N)+j(N)\geq d=\injdim A.$$
\end{proof}

\begin{theorem}
\label{xxthm4.2}
Assume Hypothesis \ref{xxhyp3.8}.
Suppose that $(B_i, _{B_i}\!(M_i)_{A}, _{A}\!(N_i)_{B_i})$ are two $NQRs$
of $A$ for $i=1,2$. If $\partial(A)\leq 2$, then $B_1$
and $B_2$ are Morita equivalent.
\end{theorem}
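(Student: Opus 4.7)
My plan is to promote the $\cong_0$-equivalences supplied by Theorem \ref{xxthm3.15} to genuine bimodule isomorphisms, producing a strict Morita context between $B_1$ and $B_2$. Applying Theorem \ref{xxthm3.15} together with Proposition \ref{xxpro3.9} to the two given NQRs yields bimodules $_{B_1}U_{B_2}$ and $_{B_2}V_{B_1}$, each reflexive on both sides, with $B_1\cong\End_{B_2}(U)$, $B_2^{\op}\cong\End_{B_1^{\op}}(U)$, and bimodule relations $U\otimes_{B_2}V\cong_0 B_1$, $V\otimes_{B_1}U\cong_0 B_2$. Since each $B_i$ is Auslander regular, $\gldim B_i=\injdim B_i\leq\partial(B_i)=2$ by Lemma \ref{xxlem4.1}, and Corollary \ref{xxcor2.13} then makes $U$ and $V$ finitely generated projective on both sides.

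Next I would extract canonical bimodule maps $\phi:U\otimes_{B_2}V\to B_1$ and $\psi:V\otimes_{B_1}U\to B_2$ using the $\omega\pi$ functor of Section \ref{xxsec1}. Since $B_1$ is reflexive, it coincides with its own Gabber closure $\omega\pi(B_1)$ in $\qmod_0 B_1$, and the $\cong_0$-relation gives $\omega\pi(U\otimes_{B_2}V)\cong B_1$; composing the unit map $u:U\otimes_{B_2}V\to\omega\pi(U\otimes_{B_2}V)$ with this isomorphism and invoking Lemma \ref{xxlem1.11}(2) produces a bimodule map $\phi$ whose kernel (the maximal submodule of $\partial$-dimension $\leq 0$) and cokernel both have $\partial$-dimension $\leq 0$. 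Define $\psi$ analogously. The core work is to show $\phi$ (and symmetrically $\psi$) is an isomorphism. For the kernel: $V$ is a direct summand of $B_2^m$ as a left $B_2$-module, so $U\otimes_{B_2}V$ is a summand of $U^m$ as a left $B_1$-module and hence finitely generated projective, which by Corollary \ref{xxcor2.16} makes it $0$-pure; by the symmetry of $\partial$, this forces $\ker\phi=0$. For the cokernel: $B_1$ is reflexive and $\partial(\coker\phi)\leq 0=d-2$, so Lemma \ref{xxlem3.10}(2) gives $\Ext^1_{B_1}(\coker\phi,B_1)=0$, and the sequence $0\to\im\phi\to B_1\to\coker\phi\to 0$ splits. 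A nonzero direct summand of the reflexive module $B_1$ would itself be reflexive and hence have $\partial$-dimension $2$ by Corollary \ref{xxcor2.16}, contradicting $\partial(\coker\phi)\leq 0$; so $\coker\phi=0$.

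Once we have bimodule isomorphisms $U\otimes_{B_2}V\cong B_1$ and $V\otimes_{B_1}U\cong B_2$, associativity of the tensor product furnishes mutually inverse equivalences $-\otimes_{B_1}U:\Mod B_1\xrightarrow{\sim}\Mod B_2$ and $-\otimes_{B_2}V:\Mod B_2\xrightarrow{\sim}\Mod B_1$, so $B_1$ and $B_2$ are Morita equivalent. The main obstacle will be the cokernel-vanishing step: the bound $\partial(\coker\phi)\leq 0$ only says the cokernel is small, and promoting this to zero hinges on the combination of reflexivity of $B_1$, the $\partial$-CM property (which forces nonzero summands of $B_1$ to have full $\partial$-dimension), and the $\Ext^1$-vanishing of Lemma \ref{xxlem3.10}(2).
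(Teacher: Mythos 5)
Your proof follows the paper's reduction up through Proposition \ref{xxpro3.9} and Corollary \ref{xxcor2.13}: both arrive at reflexive bimodules $_{B_1}U_{B_2}$ and $_{B_2}V_{B_1}$ that, because $\gldim B_i\leq 2$, must be projective on both sides. From that point the paper finishes in one stroke: $U\otimes_{B_2}V$ is projective, hence reflexive, $B_1$ is free, hence reflexive, and both are $\cong_{d-2}$, so Corollary \ref{xxcor3.4}(2) immediately gives bimodule isomorphisms $U\otimes_{B_2}V\cong B_1$ and $V\otimes_{B_1}U\cong B_2$, yielding the Morita context. Your route of constructing $\phi$ explicitly via $\omega\pi$ and killing its kernel and cokernel reaches the same destination but is considerably longer and, crucially, re-derives by hand what Corollary \ref{xxcor3.4} already packages.

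There is also a genuine gap in the cokernel step as written. You invoke Lemma \ref{xxlem3.10}(2) to get $\Ext^1_{B_1}(\coker\phi,B_1)=0$ and then assert the sequence $0\to\im\phi\to B_1\to\coker\phi\to 0$ splits. But the obstruction to splitting that sequence lives in $\Ext^1_{B_1}(\coker\phi,\im\phi)$, not $\Ext^1_{B_1}(\coker\phi,B_1)$; these coincide only if $\im\phi\cong B_1$, which is precisely what you are trying to prove. The repair is easy: your kernel argument identifies $\im\phi$ with $U\otimes_{B_2}V$, which is projective and hence reflexive, so Lemma \ref{xxlem3.10}(2) applied with $U=\im\phi$ gives the required $\Ext^1$-vanishing. (Alternatively: $\im\phi$ projective forces $\pd_{B_1}\coker\phi\leq 1$, while a nonzero $\coker\phi$ of $\partial$-dimension $0$ would have $j(\coker\phi)=2$ by the $\partial$-$\CM$ property, so $\Ext^2_{B_1}(\coker\phi,B_1)\neq 0$, a contradiction.) Either fix works, but as stated the splitting does not follow from the displayed $\Ext$-group.
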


\begin{proof}
By definition, $\partial(B_i)=\partial(A)\leq 2$. By
Lemma \ref{xxlem4.1},
$$\gldim (B_i)=\injdim (B_i)\leq
\partial (B_1)\leq 2.$$

Let $T:=M_1\otimes_A N_2$ and $\widetilde{T}:=M_2\otimes_A N_1$.
Then there are isomorphisms, by Lemma \ref{xxlem1.10},
\begin{eqnarray*}
T\otimes_{B_2}\widetilde{T}\cong_{d-2}B_1,
\end{eqnarray*}
and
\begin{eqnarray*}
\widetilde{T}\otimes_{B_1}T\cong_{d-2}B_2.
\end{eqnarray*}
By Proposition \ref{xxpro3.9}, there exist $_{B_1}\!U_{B_2}$
and $_{B_2}\!V_{B_1}$ which are reflexive modules (and
finitely generated) on both sides such that
$$U\otimes_{B_2}V\cong_{d-2} B_1\quad {\text{and}}\quad
V\otimes_{B_1}U\cong_{d-2} B_2.$$
Since $\gldim (B_i)\leq 2$, by Corollary \ref{xxcor2.13},
$U$ and $V$ are projective modules on both
sides. Hence $U\otimes_{B_2}V$ and $V\otimes_{B_1}U$ are projective (whence
reflexive) on both sides. Therefore, by Corollary \ref{xxcor3.4}, we have
$$U\otimes_{B_2}V\cong B_1\quad {\text{and}}\quad
V\otimes_{B_1}U\cong B_2,$$
which implies that $B_1$ and $B_2$ are Morita equivalent.
This finishes the proof.
\end{proof}

\section{Depth in the noncommutative setting}
\label{xxsec5}
The proof of part (2) of the main theorem needs some extra preparation.
In particular, it uses the concept of a depth in noncommutative algebra.
There are several slightly different definitions of the depth in the
noncommutative setting. It is a good idea to fix some notation.

Let $A$ be an algebra with a dimension function $\partial$.

\begin{hypothesis}
\label{xxhyp5.1}
Let $A$ be an algebra. Assume that $\modu_0A\neq 0$, namely,
there is a nonzero module $S\in\modu_0 A$.
\end{hypothesis}

Hypothesis \ref{xxhyp5.1} is sometimes quite natural, but
not automatic. By abuse of notation, we can also talk about
Hypothesis \ref{xxhyp5.1} for a single algebra $A$ or for a
family of algebras ${\mathcal A}$.

\begin{definition}
\label{xxdef5.2}
Let $A$ be an algebra and $\partial$ be a dimension function. For
an $A$-module $M\in\modu A,$ define
\begin{eqnarray*}
\dep_AM = \inf\{i|\Ext_A^i(S,M)\neq0 \,\text{\,for\,some}\,\,
S\in\modu_0A\} \in {\mathbb N}\cup\{+\infty\}.
\end{eqnarray*}
If no confusion can arise, we write $\dep M$ for $\dep_AM$. If
Hypothesis \ref{xxhyp5.1} fails for $A$, then $\dep_A M=+\infty$
for every $A$-module $M$.
\end{definition}

If $\dep_A M<+\infty$ for some $A$-module $M$, then Hypothesis
\ref{xxhyp5.1} holds for the algebra $A$. One can easily prove
the following depth lemma.

\begin{lemma}
\label{xxlem5.3}
Let $A$ be an algebra and $\partial$ be a dimension function.
Let
$$0\rightarrow M'\rightarrow M\rightarrow M''\rightarrow0$$
be a short exact sequence of finitely generated right
$A$-modules. Then
\begin{enumerate}
\item[$(1)$]
$\dep M\geq\min\{\dep M',\dep M''\}$.
\item[$(2)$]
$\dep M'\geq\min\{\dep M,\dep M''+1\}$.
\item[$(3)$]
$\dep M''\geq\min\{\dep M,\dep M'-1\}$.
\end{enumerate}
\end{lemma}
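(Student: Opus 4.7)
The plan is the standard one for a depth lemma: for each fixed test module $S \in \modu_0 A$, apply $\Hom_A(S,-)$ to the short exact sequence $0 \to M' \to M \to M'' \to 0$ and read off each of the three inequalities from the resulting long exact Ext sequence
$$\cdots \to \Ext^{i-1}_A(S,M'') \to \Ext^i_A(S,M') \to \Ext^i_A(S,M) \to \Ext^i_A(S,M'') \to \Ext^{i+1}_A(S,M') \to \cdots$$
by chasing vanishing from two of the three terms to force vanishing of the third, then taking the infimum over all such $S$.

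For (1), if $i < \min\{\dep M', \dep M''\}$ then the definition of depth makes $\Ext^i_A(S,M') = 0 = \Ext^i_A(S,M'')$, and the middle piece of the long exact sequence forces $\Ext^i_A(S,M) = 0$; since this holds for every $S \in \modu_0 A$, we conclude $\dep M \geq \min\{\dep M', \dep M''\}$. For (2), if $i < \min\{\dep M, \dep M'' + 1\}$, then $\Ext^i_A(S,M) = 0$ and $\Ext^{i-1}_A(S,M'') = 0$ (because $i-1 < \dep M''$), and the fragment $\Ext^{i-1}_A(S,M'') \to \Ext^i_A(S,M') \to \Ext^i_A(S,M)$ yields $\Ext^i_A(S,M') = 0$. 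For (3), if $i < \min\{\dep M, \dep M' - 1\}$, then $\Ext^i_A(S,M) = 0$ and $\Ext^{i+1}_A(S,M') = 0$, and the fragment $\Ext^i_A(S,M) \to \Ext^i_A(S,M'') \to \Ext^{i+1}_A(S,M')$ yields $\Ext^i_A(S,M'') = 0$.

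Boundary and degeneracy cases follow the usual conventions: when Hypothesis \ref{xxhyp5.1} fails then all depths are $+\infty$ and each inequality is vacuous; when one of the depths on the right is $+\infty$ the relevant Ext terms vanish in every degree and the corresponding hypothesis is automatic; when $\dep M' = 0$ the quantity $\dep M' - 1$ is to be read as $-1$ (or $-\infty$) and (3) becomes trivial. I do not expect any real obstacle: the argument is the verbatim analogue of the classical commutative depth lemma, and the only noncommutative input is that the testing class $\modu_0 A$ is closed under the operations used (being a full subcategory defined by $\partial$), which is automatic from exactness of $\partial$. No lemma beyond the standard long exact Ext sequence is needed.
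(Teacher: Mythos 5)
Your proof is correct and follows exactly the route the paper intends: the authors omit the proof, remarking only that part (2) is ``basically given in the proof of Lemma \ref{xxlem5.5},'' which is precisely the long exact $\Ext$-sequence argument you spell out. Your index-chasing for all three parts is accurate, and your handling of the degenerate cases (Hypothesis \ref{xxhyp5.1} failing, infinite depths, $\dep M'=0$) matches the conventions the paper uses elsewhere.
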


The proof of Lemma \ref{xxlem5.3}(2) is basically
given in the proof of Lemma \ref{xxlem5.5}.

The following proposition resembles the 
``special $\chi$ condition'' in \cite[Definition 16.5.16]{Y2}.

\begin{proposition}
\label{xxpro5.4}
Suppose that Hypothesis \ref{xxhyp5.1} holds
for $A$. If $A$ is a $\partial$-$\CM$ algebra with
$\partial(A)=\partial(A^{\op})=d$, then
$$\dep_AA=\dep_{A^{\op}}A=d.$$
\end{proposition}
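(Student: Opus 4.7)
The plan is to reduce the computation of $\dep_A A$ directly to the grade number via the $\partial$-$\CM$ identity $j(M)+\partial(M)=\partial(A)=d$, applied to modules of $\partial$-dimension zero. By the left-right symmetry of the hypotheses ($\partial(A)=\partial(A^{\op})=d$, and $A$ is $\partial$-$\CM$ on both sides), it suffices to prove $\dep_A A=d$; the same argument will then give $\dep_{A^{\op}} A=d$.

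First, I would pick any nonzero $S\in\modu_0 A$, which exists by Hypothesis \ref{xxhyp5.1}. Since $S\neq 0$, the axioms of a dimension function give $\partial(S)\neq-\infty$, and since $S\in\modu_0 A$ we have $\partial(S)\leq 0$, forcing $\partial(S)=0$. The $\partial$-$\CM$ property then yields $j_A(S)=d-\partial(S)=d$, i.e.\ $\Ext_A^i(S,A)=0$ for $i<d$ and $\Ext_A^d(S,A)\neq 0$.

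Applying this observation to every nonzero $S\in\modu_0 A$ simultaneously shows that $\Ext_A^i(S,A)=0$ for all $i<d$ and all $S\in\modu_0 A$, so $\dep_A A\geq d$. Conversely, picking a single nonzero witness $S_0\in\modu_0 A$ (again using Hypothesis \ref{xxhyp5.1}) gives $\Ext_A^d(S_0,A)\neq 0$, so $\dep_A A\leq d$. Combining these yields $\dep_A A=d$, and the identical argument on the opposite side gives $\dep_{A^{\op}}A=d$.

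There is essentially no obstacle here; the only subtlety is making sure Hypothesis \ref{xxhyp5.1} is invoked to produce the witness $S_0$, because without it the set over which the infimum is taken might be empty and $\dep_A A$ would be $+\infty$ by convention rather than $d$. Thus the CM identity does all the work, once one notes that $\partial$-dimension zero forces the maximal grade.
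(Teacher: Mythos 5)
Your argument is correct and follows the same route as the paper's own proof: fix a nonzero $S\in\modu_0 A$, observe $\partial(S)=0$, apply the $\partial$-$\CM$ identity to get $j_A(S)=d$, and conclude $\dep_A A=d$, then repeat on the opposite side. You are slightly more explicit than the paper about why $\partial(S)=0$ exactly (rather than merely $\leq 0$) and about the role of Hypothesis~\ref{xxhyp5.1} in producing a witness so that the infimum is over a nonempty set, but these are clarifications of the same computation rather than a different method.
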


\begin{proof}
Given every nonzero $S\in\modu_0A$, we have
$$j_A(S)=\partial(A)-\partial(S)=\partial(A)=d,$$
namely,
$$d=\inf\{i|\Ext_A^i(S,A)\neq0\}.$$
Thus $\dep_A A=d$. Similarly, we have $\dep_{A^{\op}}A=d$.
\end{proof}

The proof of Theorem \ref{xxthm0.6}(2) also uses the following
two lemmas, which were known in the local or graded
setting \cite[Lemma 3.15]{CKWZ2}.

\begin{lemma}
\label{xxlem5.5}
Suppose that $M$ and $N$ are nonzero
finitely generated $A$-modules related by the exact sequence
$$0\longrightarrow M\longrightarrow P_{s-1}\longrightarrow
P_{s-2}\longrightarrow\cdots\longrightarrow P_0
\longrightarrow N\longrightarrow0.$$
Then $$\dep_A(M)\geq\min\{\dep_A(N)+s,\dep_A(P_0),
\ldots,\dep_A(P_{s-2}),\dep_A(P_{s-1})\}.$$
If, further, $\dep_A(P_j)\geq s+\dep_A(N)$ for each $j$,
then $\dep_A(M)=\dep_A(N)+s.$
\end{lemma}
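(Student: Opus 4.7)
The plan is to decompose the given long exact sequence into a chain of short exact sequences and then argue by induction using Lemma \ref{xxlem5.3}. Set $K_0:=N$, let $K_i:=\ker(P_{i-1}\to P_{i-2})$ for $1\leq i\leq s-1$, and put $K_s:=M$. Then, splicing the long exact sequence at each kernel, we obtain short exact sequences
$$0\longrightarrow K_{i+1}\longrightarrow P_i\longrightarrow K_i\longrightarrow 0$$
in $\modu A$ for every $0\leq i\leq s-1$.

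For the first (inequality) assertion, I would prove by induction on $i$ that
$$\dep K_i\;\geq\;\min\{\dep N+i,\;\dep P_0,\;\dep P_1,\ldots,\dep P_{i-1}\}$$
for $0\leq i\leq s$. The base case $i=0$ is trivial since $K_0=N$. For the inductive step, Lemma \ref{xxlem5.3}(2) applied to the above short exact sequence gives $\dep K_{i+1}\geq\min\{\dep P_i,\dep K_i+1\}$. Combining with the inductive hypothesis and noting that $\dep P_j+1\geq\dep P_j$, the claim follows. Taking $i=s$ yields the desired lower bound on $\dep_A(M)$.

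For the equality assertion, assume now that $\dep P_j\geq s+\dep N$ for all $j$. I would prove by induction that $\dep K_i=\dep N+i$ for $0\leq i\leq s$. The lower bound $\dep K_{i+1}\geq\dep N+i+1$ follows exactly as above from Lemma \ref{xxlem5.3}(2), since $\dep P_i\geq s+\dep N\geq (i+1)+\dep N$. For the matching upper bound, apply Lemma \ref{xxlem5.3}(3) to the same short exact sequence:
$$\dep K_i\;\geq\;\min\{\dep P_i,\;\dep K_{i+1}-1\}.$$
By the inductive hypothesis $\dep K_i=\dep N+i$, while $\dep P_i\geq s+\dep N>i+\dep N=\dep K_i$ because $i<s$. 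Hence the minimum on the right must be attained by $\dep K_{i+1}-1$, forcing $\dep K_{i+1}\leq\dep N+i+1$. Setting $i=s-1$ yields $\dep_A(M)=\dep_A(N)+s$.

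The argument is essentially routine once the short exact sequence breakdown is fixed; there is no real obstacle. The only point requiring slight care is ensuring the strict inequality $\dep P_i>\dep K_i$ at every stage of the second induction, which is guaranteed by $i<s$ (and which is why the hypothesis $\dep P_j\geq s+\dep N$ rather than merely $\geq\dep N+1$ is needed). The case where $\dep N=+\infty$ is subsumed, under the convention $+\infty+c=+\infty$, by the first part alone.
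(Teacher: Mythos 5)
Your proof is correct and takes essentially the same approach as the paper's: the paper reduces by induction on $s$ to the case $s=1$, which is exactly your short-exact-sequence splice, and it explicitly remarks that the lower bound in the $s=1$ case follows from Lemma \ref{xxlem5.3}(2) as you use. The only cosmetic difference is in the upper bound, where the paper picks $S\in\modu_0 A$ with $\Ext_A^{\dep_A N}(S,N)\neq 0$ and chases the Ext long exact sequence directly to get $\Ext_A^{\dep_A N+1}(S,M)\neq 0$, whereas you invoke Lemma \ref{xxlem5.3}(3) and argue by contradiction; both amount to the same computation.
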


\begin{proof} There is nothing to be proved if Hypothesis
\ref{xxhyp5.1} fails for $A$. So we assume that Hypothesis
\ref{xxhyp5.1} holds for $A$ for the rest of the proof. By
induction on $s$, it suffices to show the assertion in the
case of $s=1$. For any $S\in\modu_0 A$, letting $P=P_0$ and
applying $\Hom_A(S,-)$ to the short exact sequence
$$0\rightarrow M\rightarrow P\rightarrow N\rightarrow0,$$
we obtain a long exact sequence
$$\begin{aligned}
\cdots\rightarrow\Ext_A^{i-1}(S,P)&\rightarrow\Ext_A^{i-1}(S,N)
\rightarrow\Ext_A^{i}(S,M)\rightarrow \Ext_A^{i}(S,P)\\
&\rightarrow\Ext_A^{i}(S,N)\rightarrow\Ext_A^{i+1}(S,M)\rightarrow\cdots.
\end{aligned}
$$
Since $\Ext_A^{i}(S,P)=0$ for all $i<\dep_AP,$ we get
$\Ext_A^{i}(S,M)\cong\Ext_A^{i-1}(S,N)$ for all $i<\dep_AP$.
The latter is equal to $0$ for all $i\leq\dep_AN.$
In other words, for every $i<\min\{\dep_A(N)+1,\dep_A(P)\}$,
we have $\Ext_A^{i}(S,M)=0$, namely,
$$\dep_A(M)\geq \min\{\dep_A(N)+1,\dep_A(P)\}.$$
This assertion also follows from Lemma \ref{xxlem5.3}(2).
If $i=\dep_A(N)+1\leq\dep_A(P)$, one has
$$0\neq\Ext_A^{i-1}(S,N)\subseteq\Ext_A^{i}(S,M),$$
which implies that $\dep_A(M)=\dep_A(N)+1,$ as desired.
\end{proof}

\begin{lemma}
\label{xxlem5.6}
Let $A$ and $B$ be algebras. Suppose that $M$ is a finitely generated
right $B$-module and $N$ is an $(A,B)$-bimodule that is finitely
generated on both sides. Then
$$\dep_{A^{\op}}(\Hom_B(M,N))\geq\min\{2,\dep_{A^{\op}}(N)\}.$$
\end{lemma}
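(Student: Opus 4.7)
The plan is to reduce the assertion to an application of the depth lemma (Lemma \ref{xxlem5.3}) by means of a standard two-step presentation of $M$. Concretely, since $M$ is finitely generated over the noetherian ring $B$, I would choose a presentation
\begin{equation*}
B^{m}\longrightarrow B^{n}\longrightarrow M\longrightarrow 0
\end{equation*}
by finitely generated free right $B$-modules. Applying the left exact contravariant functor $\Hom_{B}(-,N)$ produces an exact sequence of left $A$-modules
\begin{equation*}
0\longrightarrow \Hom_{B}(M,N)\longrightarrow N^{n}\longrightarrow N^{m}.
\end{equation*}

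Let $K$ denote the image of $N^{n}\to N^{m}$, so that we have two short exact sequences of right $A^{\op}$-modules:
\begin{equation*}
0\longrightarrow \Hom_{B}(M,N)\longrightarrow N^{n}\longrightarrow K\longrightarrow 0,
\qquad
0\longrightarrow K\longrightarrow N^{m}\longrightarrow N^{m}/K\longrightarrow 0.
\end{equation*}
I would apply Lemma \ref{xxlem5.3}(2) to the second sequence. Since $\dep_{A^{\op}}(N^{m})=\dep_{A^{\op}}(N)$ and $\dep_{A^{\op}}(N^{m}/K)\geq 0$ (by the convention that $\dep\geq 0$ always), this yields
\begin{equation*}
\dep_{A^{\op}}(K)\ \geq\ \min\bigl\{\dep_{A^{\op}}(N),\,1\bigr\}.
\end{equation*}

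Feeding this bound into Lemma \ref{xxlem5.3}(2) applied to the first short exact sequence gives
\begin{equation*}
\dep_{A^{\op}}(\Hom_{B}(M,N))\ \geq\ \min\bigl\{\dep_{A^{\op}}(N),\,\dep_{A^{\op}}(K)+1\bigr\}
\ \geq\ \min\bigl\{\dep_{A^{\op}}(N),\,\min\{\dep_{A^{\op}}(N)+1,\,2\}\bigr\},
\end{equation*}
which simplifies to $\min\{2,\dep_{A^{\op}}(N)\}$, as desired. There is no serious obstacle here: the only small points to verify carefully are that the $\Hom_{B}(-,N)$ functor intertwines the right $B$-module structure of the presentation with the left $A$-module (= right $A^{\op}$-module) structure coming from the $(A,B)$-bimodule $N$ (so that all sequences above live in $\modu A^{\op}$), and that $\dep_{A^{\op}}(N^{n})=\dep_{A^{\op}}(N)$ — both of which are routine. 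The whole argument also degenerates gracefully if Hypothesis \ref{xxhyp5.1} fails for $A^{\op}$, since then every depth is $+\infty$ and the inequality is vacuous.
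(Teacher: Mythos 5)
Your proof is correct and follows essentially the same route as the paper's: apply $\Hom_B(-,N)$ to a two-step presentation of $M$ by finitely generated projectives (free modules in your version), split the resulting left-exact sequence into two short exact sequences, and invoke the depth lemma (Lemma \ref{xxlem5.3}(2), equivalently Lemma \ref{xxlem5.5} with $s=1$) twice. The only difference is cosmetic: the paper separates the cases $\dep_{A^{\op}}(N)=1$ and $\dep_{A^{\op}}(N)\geq 2$, whereas you combine them into a single chain of inequalities using $\dep \geq 0$; your arithmetic simplification $\min\{\dep(N),\min\{\dep(N)+1,2\}\}=\min\{2,\dep(N)\}$ checks out, and the noetherian hypothesis keeps $K$ and $\Hom_B(M,N)$ finitely generated as required.
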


\begin{proof}
Consider a projective resolution of the right $B$-module $M$
$$\cdots\rightarrow P_1\rightarrow P_0\rightarrow M\rightarrow0,$$
where $P_i$ is finitely generated for $i=0,1$. By applying
$\Hom_B(-,N)$ to the exact sequence above, one has short
exact sequences
\begin{equation}
\label{E5.6.1}\tag{E5.6.1}
0\rightarrow\Hom_B(M,N)\rightarrow\Hom_B(P_0,N)\rightarrow
C_1\rightarrow 0,
\end{equation}
and
\begin{equation}
\label{E5.6.2}\tag{E5.6.2}
0\rightarrow C_1\rightarrow\Hom_B(P_1,N)\rightarrow C_2
\rightarrow 0,
\end{equation}
for some left $A$-modules $C_1$ and $C_2$. Since $P_i$ is
projective over $B$, $\Hom_B(P_i,N)$ has (left) depth at
least equal to $\dep_{A^{\op}}(N)$ for $i=0,1$. Without
loss of generality, we assume that $\dep_{A^{\op}}(N)\geq 1$.
So we consider two different cases.

If $\dep_{A^{\op}}(N)=1$, then
$\dep_{A^{\op}}(\Hom_B(P_0,N))\geq 1$. By \eqref{E5.6.1}
and Lemma \ref{xxlem5.5}, we have $\dep_{A^{\op}}(\Hom_B(M,N))
\geq 1$, as desired. If $\dep_{A^{\op}}(N)\geq 2$, then
$\dep_{A^{\op}}(\Hom_B(P_i,N))\geq 2$ for $i=0,1$.
Applying Lemma \ref{xxlem5.5} to \eqref{E5.6.2} and
\eqref{E5.6.1} respectively, we have
$\dep_{A^{\op}}(C_1)\geq 1$ and
$\dep_{A^{\op}}(\Hom_{B^{\op}}(M,N))\geq 2$.
This finishes the proof.
\end{proof}

\begin{remark}
\label{xxrem5.7}
The above lemma holds true for a finitely generated left
$B$-module $M$ and a $(B,A)$-bimodule $N$ which is finitely
generated on both sides, namely,
$$\dep_{A}(\Hom_{B^{\op}}(M,N))\geq\min\{2,\dep_{A}(N)\}.$$
\end{remark}

\begin{corollary}
\label{xxcor5.8}
Let $A$ be an algebra.
\begin{enumerate}
\item[$(1)$]
If $\dep_{A^{\op}}A\geq 2$ and $M\in\refl A^{\op},$
then $\dep_{A^{\op}}M\geq 2$.
\item[$(2)$]
If $\dep_{A}A\geq 2$ and $M\in\refl A,$ then $\dep_{A}M\geq 2.$
\end{enumerate}
\end{corollary}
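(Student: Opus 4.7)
The plan is to reduce both statements to a direct invocation of Lemma \ref{xxlem5.6} (for part (1)) and Remark \ref{xxrem5.7} (for part (2)), using the definition of reflexivity to rewrite $M$ as a $\Hom$-module.

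For part (2), let $M \in \refl A$, so $M$ is a finitely generated right $A$-module with a canonical isomorphism $M \cong M^{\vee\vee} = \Hom_{A^{\op}}(M^\vee, A)$ where $M^\vee = \Hom_A(M,A)$. By Lemma \ref{xxlem2.19}(1), $M^\vee$ is a finitely generated left $A$-module. Now apply Remark \ref{xxrem5.7} with $B = A$, taking the finitely generated left $B$-module to be $M^\vee$ and the $(B,A)$-bimodule $N$ to be $A$ itself, which is finitely generated on both sides. This yields
\[
\dep_A(M) \;\cong\; \dep_A(\Hom_{A^{\op}}(M^\vee, A)) \;\geq\; \min\{2, \dep_A(A)\} \;\geq\; 2,
\]
where the last inequality uses the hypothesis $\dep_A A \geq 2$.

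For part (1), the argument is entirely symmetric. If $M \in \refl A^{\op}$, then $M$ is a finitely generated left $A$-module with $M \cong \Hom_A(L, A)$, where $L := \Hom_{A^{\op}}(M,A)$ is a finitely generated right $A$-module (again by Lemma \ref{xxlem2.19}(1), applied on the other side). Invoking Lemma \ref{xxlem5.6} with $B = A$, the finitely generated right $B$-module being $L$, and the $(A,A)$-bimodule $N = A$, one obtains
\[
\dep_{A^{\op}}(M) \;\geq\; \min\{2, \dep_{A^{\op}}(A)\} \;\geq\; 2.
\]

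There is really no genuine obstacle here: the corollary is a formal consequence of the depth bound for $\Hom$-modules (Lemma \ref{xxlem5.6} and Remark \ref{xxrem5.7}) combined with the definition of reflexivity. The only thing to check carefully is that both the argument module and the bimodule in the invocation satisfy the finite-generation hypotheses of Lemma \ref{xxlem5.6}, which is guaranteed by Lemma \ref{xxlem2.19}(1) and the fact that a reflexive module is by definition finitely generated.
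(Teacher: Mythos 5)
Your proof is correct and follows essentially the same route as the paper's: both rewrite $M\cong M^{\vee\vee}$ and apply Lemma \ref{xxlem5.6} (resp.\ Remark \ref{xxrem5.7}) with $B=A$ and $N=A$ to get $\dep M\geq\min\{2,\dep A\}=2$. One small caveat: Lemma \ref{xxlem2.19}(1) assumes $A$ is Auslander-Gorenstein, which is not a hypothesis of the corollary, but the specific fact you extract from it (that $M^{\vee}$ is finitely generated) only needs $A$ noetherian, so the argument still stands.
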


\begin{proof}
We just need to show part (1). By Lemma \ref{xxlem5.6},
\begin{eqnarray*}
\dep_{A^{\op}}M&=&\dep_{A^{\op}} \Hom_{A}(\Hom_{A^{\op}}(M,A),A)\\
&\geq & \min\{2,\dep_{A^{\op}}A\}=2,
\end{eqnarray*}
as desired.
\end{proof}

The following lemma is the noncommutative version of \cite[Lemma 8.5]{IR}.

\begin{lemma}
\label{xxlem5.9}
Let $t$ be a nonnegative integer and let
$$0\rightarrow X_t\xrightarrow[]{f_t} X_{t-1}
\xrightarrow[]{f_{t-1}}\cdots\rightarrow X_2
\xrightarrow[]{f_2} X_1\xrightarrow[]{f_1}
X_0\rightarrow0$$
be an exact sequence of finitely generated $A$-modules with
$X_0\in\modu_0 A$. If,  for every $i>0$, $\dep_A X_i\geq i$,
then $X_0=0$.
\end{lemma}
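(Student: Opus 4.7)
The idea is to chase a single element through dimension shifting. First I would split the long exact sequence into short exact sequences. Set $K_0 = X_0$, $K_i = \ker f_i = \im f_{i+1}$ for $1 \leq i \leq t-1$, and $K_t = 0$. Exactness yields short exact sequences
\begin{equation*}
0 \longrightarrow K_{i+1} \longrightarrow X_{i+1} \longrightarrow K_i \longrightarrow 0, \qquad i=0,1,\dots,t-1,
\end{equation*}
with the convention that $X_t \cong K_{t-1}$ (the case $i=t-1$).

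Next, fix any $S \in \modu_0 A$. Each short exact sequence above induces a long exact sequence
\begin{equation*}
\cdots \to \Ext^{i}_A(S, X_{i+1}) \to \Ext^{i}_A(S, K_i) \to \Ext^{i+1}_A(S, K_{i+1}) \to \Ext^{i+1}_A(S, X_{i+1}) \to \cdots .
\end{equation*}
By hypothesis $\dep_A X_{i+1} \geq i+1$, so $\Ext^{i}_A(S, X_{i+1}) = 0$ because $i < i+1$. Hence each connecting map $\Ext^{i}_A(S, K_i) \hookrightarrow \Ext^{i+1}_A(S, K_{i+1})$ is injective. Composing these for $i=0,1,\dots,t-1$ gives an injection
\begin{equation*}
\Hom_A(S, X_0) = \Ext^0_A(S, K_0) \hookrightarrow \Ext^1_A(S, K_1) \hookrightarrow \cdots \hookrightarrow \Ext^{t-1}_A(S, K_{t-1}) \cong \Ext^{t-1}_A(S, X_t).
\end{equation*}
But $\dep_A X_t \geq t > t-1$, so the rightmost term vanishes, forcing $\Hom_A(S, X_0) = 0$.

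Finally, since $X_0 \in \modu_0 A$ by hypothesis, I may take $S = X_0$. Then $\Hom_A(X_0, X_0) = 0$, which forces $X_0 = 0$ (the identity morphism would otherwise be nonzero). The edge case $t=0$ is trivial, since the sequence is simply $0 \to X_0 \to 0$. The main (and only) subtlety is making sure the depth bounds are exploited with the correct index shift; there is no genuine obstacle here beyond the bookkeeping of the dimension-shifting argument, and the proof is a direct noncommutative translation of \cite[Lemma 8.5]{IR} that needs nothing beyond the definition of $\dep_A$ and the standard Ext long exact sequences.
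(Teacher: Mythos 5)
Your proof is correct and is essentially the same dimension-shifting argument as the paper's: both split the long exact sequence into short exact sequences at the kernels and exploit the depth bounds on $X_i$ to conclude $\dep_A X_0 \geq 1$, forcing $X_0=0$ since $X_0\in\modu_0 A$. The only packaging difference is that the paper runs an induction showing $\dep_A(\im f_i)\geq i$ via the depth lemma (Lemma \ref{xxlem5.3}(3)), while you unwind that lemma into the underlying Ext long-exact-sequence chase directly.
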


\begin{proof} The assertion is automatic if Hypothesis
\ref{xxhyp5.1} fails for $A$. So for the rest of the proof, we assume
that Hypothesis \ref{xxhyp5.1} holds for $A$.

Let $Y_i$ denote $\im f_i\subseteq X_{i-1}$ for $1\leq i\leq t$.
Inductively, we will show $\dep_AY_i\geq i$ for all $i$. This is
clearly true for $i=t$. Now we assume that $\dep_AY_{i+1}\geq i+1$
for some $i$ and would like to show that $\dep_A Y_{i}\geq i$.
Consider the exact sequence
$$0\longrightarrow Y_{i+1}\longrightarrow X_i\longrightarrow Y_i
\longrightarrow 0$$
with the hypothesis $\dep_AY_{i+1}\geq i+1$ and $\dep X_i\geq i$.
By Lemma \ref{xxlem5.3}(3), $\dep_AY_i\geq i$. This finishes the
inductive step and therefore $\dep_A Y_i\geq i$ for all $1\leq i\leq t$.
In particular, $\dep_AX_0=\dep_A Y_1\geq 1$. Since $X_0=Y_1\in\modu_0 A$,
the only possibility is $X_0=0$.
\end{proof}

\begin{proposition}
\label{xxpro5.10}
Let $A$ be an algebra and $d$ be a positive integer.
Suppose that
$$X:=\,0\rightarrow X^0\xrightarrow[]{f^0}X^1
\xrightarrow[]{f^1}\cdots\rightarrow X^d
\xrightarrow[]{f^d}X^{d+1}\rightarrow\cdots$$
is a complex in $\modu A$ satisfying the following:
\begin{enumerate}
\item[$(1)$]
$\dep X^i\geq d-i$ for all $i\geq 0$;
\item[$(2)$]
$\Hm^i=0$ for all $i\geq d$, where $\Hm^i$ denotes the
$i$-th cohomology of the above complex $X$;
\item[$(3)$]
$\Hm^i\in\modu_0A$ for all $i\geq 0$.
\end{enumerate}
Then the complex $X$ is exact.
\end{proposition}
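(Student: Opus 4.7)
The plan is to prove by upward induction on $m$ that $H^m = 0$ for $m = 0, 1, \ldots, d-1$; combined with hypothesis $(2)$, this yields exactness of $X$. Write $Z^i := \ker f^i$ and $B^i := \im f^{i-1}$ (with $B^{-1} := 0$), so that $H^i = Z^i / B^{i-1}$.

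Fix $m$ with $0 \leq m \leq d-1$ and assume $H^k = 0$ for $0 \leq k < m$ (vacuous when $m = 0$). Under this hypothesis the complex $X$ is exact at positions $0, 1, \ldots, m-1$, so splicing that initial exact stretch with the short exact sequence $0 \to B^{m-1} \to Z^m \to H^m \to 0$ produces
$$0 \to X^0 \to X^1 \to \cdots \to X^{m-1} \to Z^m \to H^m \to 0,$$
where $X^{m-1} \to Z^m$ is the composite $X^{m-1} \twoheadrightarrow B^{m-1} \hookrightarrow Z^m$. Exactness at $X^{m-1}$ requires $Z^{m-1} = B^{m-2}$, i.e.\ $H^{m-1} = 0$, which is supplied by the inductive hypothesis; the remaining exactness checks are immediate. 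For $m = 0$ the sequence degenerates to the identification $Z^0 = H^0$.

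I would then apply Lemma \ref{xxlem5.9} to this sequence with the reverse indexing $X_0 = H^m$, $X_1 = Z^m$, and $X_i = X^{m+1-i}$ for $2 \leq i \leq m+1$. Since $m \leq d-1$, hypothesis $(1)$ gives
$$\dep_A X_i \;=\; \dep_A X^{m+1-i} \;\geq\; d - (m+1-i) \;\geq\; i \qquad (2 \leq i \leq m+1).$$
For $i = 1$, Lemma \ref{xxlem5.3}$(2)$ applied to the short exact sequence $0 \to Z^m \to X^m \to B^m \to 0$ yields
$$\dep_A Z^m \;\geq\; \min\{\dep_A X^m,\; \dep_A B^m + 1\} \;\geq\; \min\{d - m,\; 1\} \;=\; 1,$$
using $\dep_A X^m \geq d - m \geq 1$ and $\dep_A B^m \geq 0$ (or $+\infty$ when $B^m = 0$). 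Since $X_0 = H^m \in \modu_0 A$ by hypothesis $(3)$, Lemma \ref{xxlem5.9} forces $H^m = 0$, closing the induction.

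The delicate point is the exactness at the splicing junction $X^{m-1} \to Z^m$, and this is precisely why upward rather than downward induction on $m$ succeeds: the inductive hypothesis $H^{m-1} = 0$ is exactly what allows the splice, while hypothesis $(1)$ provides just enough depth for Lemma \ref{xxlem5.9} to apply at the boundary case $m = d-1$, where the bound $\dep_A X^m \geq d - m$ is only $1$.
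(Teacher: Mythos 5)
Your proof is correct and rests on the same mechanism as the paper's: upward induction on cohomological degree, with a depth chase along the exact initial segment to force the cohomology module to vanish. The difference is organizational. You splice the exact stretch with $0\to B^{m-1}\to Z^m\to H^m\to 0$ to get $0\to X^0\to\cdots\to X^{m-1}\to Z^m\to H^m\to 0$, establish $\dep Z^m\geq 1$ from $0\to Z^m\to X^m\to B^m\to 0$ and Lemma \ref{xxlem5.3}(2), and then invoke Lemma \ref{xxlem5.9} directly. The paper instead works with the exact sequence $0\to X^0\to\cdots\to X^j\to\coker f^{j-1}\to 0$, derives $\dep(\coker f^{j-1})\geq d-j>0$ by repeated application of Lemma \ref{xxlem5.3}(3), and then uses the left-exact sequence $0\to\Hm^j\to\coker f^{j-1}\to X^{j+1}$ to realize $\Hm^j$ as a submodule of a positive-depth module, which forces $\Hm^j=0$ since $\Hm^j\in\modu_0 A$. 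These two arguments are equivalent: Lemma \ref{xxlem5.9} is precisely the repeated Lemma \ref{xxlem5.3}(3) computation packaged generically. Your version is marginally cleaner in two respects: it absorbs the base case $m=0$ into the same inductive framework (the paper treats $i=0$ separately), and it actually uses Lemma \ref{xxlem5.9}, which the paper proves in the same section but, somewhat curiously, never calls in its own proof of this proposition.
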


\begin{proof}
The assertion is automatic if Hypothesis
\ref{xxhyp5.1} fails for $A$. So for the rest of the proof, we assume
that Hypothesis \ref{xxhyp5.1} holds for $A$.

For $i=0$, we have an exact sequence
$$0\to \Hm^0 \to X^0\to X^0/{\Hm^0}\to 0.$$
By Lemma \ref{xxlem5.3}(2).
$\dep\Hm^0\geq\min\{\dep X^0,\dep (X^0/{\Hm^0})+1\}\geq 1$.
Since $\Hm^0\in\modu_0A,$ we have $\Hm^0=0$.

Now we fix an integer $1\leq j< d$ and assume
that $\Hm^s=0$ for all $0\leq s\leq j-1$. Then there are
two exact sequences:
$$0\rightarrow X^0\rightarrow\cdots \rightarrow X^j
\rightarrow\coker f^{j-1}\rightarrow0$$
and
$$0\rightarrow\Hm^j\rightarrow\coker f^{j-1}\rightarrow X^{j+1}.$$
By using Lemma \ref{xxlem5.3}(3) repeatedly, we obtain that
$\dep(\coker f^{j-1})\geq d-j>0$. Since $\Hm^j\in\modu_0A,$
the second exact sequence forces $\Hm^j=0$. By induction, we have
$\Hm^i=0$ for all $i=0,\cdots, d-1$ as required.
\end{proof}

\section{NQRs in dimension three}
\label{xxsec6}

Part (2) of the main theorem concerns derived equivalences of
two algebras. This can be achieved by constructing a tilting
complex between them. Let $\Lambda$ be an algebra. Recall that 
$T\in K^b(\proj \Lambda)$ is a {\it tilting complex} 
\cite[Definition 6.5]{Ri} if $\Hom_{D(\Mod \Lambda)}(T,T[i])=0$ 
for any $i\neq 0$ and the category $\add(T)$ generates 
$K^b(\proj \Lambda)$ as triangulated categories. Let $\Omega$ be 
another algebra. If there exists a tilting complex 
$T\in\K^b(\proj \Lambda)$ such that 
$\Omega\cong\End_{D(\Mod\Lambda)}(T)$, then we call $\Lambda$
and $\Omega$  {\it derived equivalent}. Rickard proved that 
there are other three equivalent conditions to characterize 
derived equivalent \cite[Theorem 6.4]{Ri}, also see 
\cite[Section 14.5]{Y2}. If a $\Lambda$-module $T$ is a tilting 
complex, then it is called a {\it tilting module}. Here we 
only need to use tilting modules, so we first recall the detailed 
definition of a tilting module. 

%\begin{lemma}\label{xxlem2.17}
%Let $A$ be an Auslander-Gorenstein algebra and $M$ a $m$-pure
%$B$-module where $m=j(M)$. Then $M=F^mM\supseteq F^{m+1}M=0.$
%\end{lemma}
%\begin{proof}
% If $M$ is $m$-pure, then $E_2^{pp}=0$ for any $p\neq m$ and
%therefore $E_{\infty}^{pp}=0=F^pM/F^{p+1}M$ for $p\neq m.$
%Take $p=m+1,$ we get that $F^{m+1}M=F^{m+2}M=\cdots=0,$ as required.
%\end{proof}

\begin{definition}\cite{HR}
\label{xxdef6.1}
Let $\Lambda$ be a ring. Then $T\in\modu\Lambda$ is called a
{\it tilting module} if the following conditions are satisfied:
\begin{enumerate}
\item[(a)]
$\pd_{\Lambda}T<\infty;$
\item[(b)]
$\Ext_{\Lambda}^i(T,T)=0$ for all $i>0;$
\item[(c)]
there is an exact sequence
$$0\longrightarrow\Lambda\longrightarrow T_0\longrightarrow T_1
\longrightarrow\cdots\longrightarrow T_{t-1}\longrightarrow T_t
\longrightarrow 0$$
with each $T_i\in\add T.$
\end{enumerate}
\end{definition}

Let $\Lambda$ and $\Omega$ be two algebras. If there is a tilting
$\Lambda$-module $T$ such that $\Omega\cong\End_{\Lambda}(T)$,
then  $\Lambda$ and $\Omega$ are derived equivalent, namely,
there is a triangulated equivalence between $D^b(\modu \Lambda)$
and $D^b(\modu \Omega)$ \cite[Theorem 6.4]{Ri}.

\begin{theorem}
\label{xxthm6.2}
Let $B_i$  be Auslander-regular and $\partial$-$\CM$ algebras for
$i=1,2$. Suppose $\partial(B_i)=d\geq 3$.
If there exists a $(B_1,B_2)$-bimodule $U$ satisfying
the following conditions:
\begin{enumerate}
\item[$(1)$]
$U\in\refl B_2;$
\item[$(2)$]
$\pd_{B_2} U\leq1;$
\item[$(3)$]
$\Ext^1_{B_2}(U,U)\in\modu_0B_1^{\op};$
\item[$(4)$]
$B_1\cong\End_{B_2}(U);$
\item[$(5)$]
When switching $B_1$ and $B_2,$ the above conditions still hold,
\end{enumerate}
then $U$ is a tilting $B_2$-module and further, $B_1$ and $B_2$ are
derived equivalent.
\end{theorem}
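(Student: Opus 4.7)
The plan is to verify that $U$ is a tilting right $B_2$-module in the sense of Definition \ref{xxdef6.1}; once this is done, hypothesis (4) $B_1 \cong \End_{B_2}(U)$ and Rickard's theorem will yield the derived equivalence. Hypothesis (2) gives condition (a) immediately. Since $\pd_{B_2} U \leq 1$, condition (b) reduces to the single vanishing $\Ext^1_{B_2}(U, U) = 0$, which is the heart of the argument. Condition (c) will then be obtained by dualizing a projective resolution of the left $B_1$-module $U$ against $U$, using the symmetric vanishing $\Ext^1_{B_1^{\op}}(U, U) = 0$.

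\textbf{Depth bounds.} Each $B_i$ is Auslander-regular and $\partial$-CM with $\partial(B_i) = d \geq 3$, so by Proposition \ref{xxpro5.4} we have $\dep_{B_i} B_i = \dep_{B_i^{\op}} B_i = d \geq 3$. By (1) and the switched version of (1) in hypothesis (5), the module $U$ is reflexive both as a right $B_2$-module and as a left $B_1$-module, so Corollary \ref{xxcor5.8} yields $\dep_{B_2} U \geq 2$ and $\dep_{B_1^{\op}} U \geq 2$.

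\textbf{Vanishing of Ext.} Choose a projective resolution $0 \to P_1 \to P_0 \to U \to 0$ of the right $B_2$-module $U$, which exists by (2). Applying $\Hom_{B_2}(-, U)$ and using (4) to identify $\End_{B_2}(U) \cong B_1$ (as left $B_1$-modules via the target action), we obtain an exact sequence of left $B_1$-modules
$$0 \to B_1 \to \Hom_{B_2}(P_0, U) \to \Hom_{B_2}(P_1, U) \to \Ext^1_{B_2}(U, U) \to 0.$$
Each middle term is a direct summand of a finite copy of ${}_{B_1}U$, hence has left depth $\geq 2$, while $B_1$ has depth $\geq 3$. By hypothesis (3) the rightmost term lies in $\modu_0 B_1^{\op}$. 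Applying Lemma \ref{xxlem5.9} (with $A = B_1^{\op}$) to this sequence, with $t = 3$, $X_3 = B_1$, $X_2 = \Hom_{B_2}(P_0, U)$, $X_1 = \Hom_{B_2}(P_1, U)$, $X_0 = \Ext^1_{B_2}(U, U)$, the depth bounds $\dep X_i \geq i$ are exactly met, so $X_0 = 0$. The symmetric argument using (5) gives $\Ext^1_{B_1^{\op}}(U, U) = 0$, completing condition (b).

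\textbf{Tilting resolution of $B_2$.} Now take a projective resolution $0 \to Q_1 \to Q_0 \to U \to 0$ of the left $B_1$-module $U$ (by the switched (2)) and apply $\Hom_{B_1^{\op}}(-, U)$. The switched version of (4) identifies $\End_{B_1^{\op}}(U) \cong B_2$ as right $B_2$-modules, each $\Hom_{B_1^{\op}}(Q_j, U)$ lies in $\add_{B_2} U$ (being a summand of copies of $\Hom_{B_1^{\op}}(B_1, U) \cong U$), and the rightmost term $\Ext^1_{B_1^{\op}}(U, U)$ vanishes by the previous step. We therefore obtain an exact sequence of right $B_2$-modules
$$0 \to B_2 \to T_0 \to T_1 \to 0$$
with $T_0, T_1 \in \add_{B_2} U$, which is condition (c). Hence $U$ is a tilting $B_2$-module with $\End_{B_2}(U) \cong B_1$, and \cite[Theorem 6.4]{Ri} gives the desired derived equivalence. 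The main obstacle is the depth bookkeeping in the vanishing step: the length of the four-term sequence is exactly balanced against the depth inputs $\dep B_1 \geq 3$ and $\dep U \geq 2$, which is precisely why the hypothesis $d \geq 3$ is essential.
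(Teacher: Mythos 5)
Your proof is correct and follows essentially the same route as the paper's: verify conditions (a), (b), (c) of Definition \ref{xxdef6.1}, establish the Ext-vanishing by applying Lemma \ref{xxlem5.9} to the four-term $\Hom$-sequence with the depth bounds $\dep B_1 \geq 3$ and $\dep_{B_1^{\op}} U\geq 2$ supplied by Proposition \ref{xxpro5.4} and Corollary \ref{xxcor5.8}, then dualize the left projective resolution of $U$. The only small caveat is that Proposition \ref{xxpro5.4} formally assumes Hypothesis \ref{xxhyp5.1}; the paper handles the degenerate case $\modu_0 B_1^{\op}=\{0\}$ separately (where hypothesis (3) already forces $\Ext^1_{B_2}(U,U)=0$), but since $\dep_{B_1^{\op}}B_1=+\infty$ in that case your depth bounds still hold, so this is only a matter of exposition.
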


\begin{proof}
It suffices to show that $U$ is a tilting $B_2$-module as given in
Definition \ref{xxdef6.1}. Below we check (a,b,c) in Definition
\ref{xxdef6.1}.

(a) By hypothesis (2), $\pd_{B_2}U\leq 1$, hence Definition \ref{xxdef6.1}(a)
holds.

(b) By hypothesis (2), we need to prove that $\Ext_{B_2}^1(U,U)=0$. If $\modu_0 B_1^{\op}$
contains only the zero module, then hypothesis (3) implies that $\Ext_{B_2}^1(U,U)=0$.
Otherwise, Hypothesis \ref{xxhyp5.1} holds for left $B_1$-modules, which we assume for
the rest of the proof.

Consider the exact sequence
$0\longrightarrow P_1\longrightarrow P_0\longrightarrow U\longrightarrow0$
of  $B_2$-modules where $P_i$ are  projective over $B_2$. Applying
$\Hom_{B_2}(-,U)$, we obtain an exact sequence of $B_1^{\op}$-modules
$$0\rightarrow\Hom_{B_2}(U,U)\rightarrow\Hom_{B_2}(P_0,U)
\rightarrow\Hom_{B_2}(P_2,U)\rightarrow\Ext_{B_2}^1(U,U)\rightarrow 0.$$
Since $U$ is a reflexive $B_1^{\op}$-module and $\dep_{B_1^{\op}}B_1=d\geq2$
(Proposition \ref{xxpro5.4}), by Lemma \ref{xxlem5.6} and Corollary
\ref{xxcor5.8}, we have
$$\dep_{B_1^{\op}}(\Hom_{B_2}(P_0,U))\geq\min\{2,\dep_{B_1^{\op}}U\}=2,$$
and
$$\dep_{B_1^{\op}}(\Hom_{B_2}(P_1,U))\geq\min\{2,\dep_{B_1^{\op}}U\}=2\geq1.$$
Moreover, $\dep_{B_1^{\op}}(\Hom_{B_2}(U,U))=\dep_{B_1^{\op}}(B_1)=d\geq3$,
then by hypothesis (3) and Lemma \ref{xxlem5.9}, $\Ext_{B_2}^1(U,U)=0.$

(c) By hypothesis (5), we have that $B_2\cong\End_{B_1^{\op}}(U,U)$
and $\pd_{B_1^{\op}}U\leq1$. By the same proof as in (b), we have
$\Ext_{B_1^{\op}}^1(U,U)=0$. Let
$0\rightarrow Q_1\rightarrow Q_0\rightarrow U\rightarrow0$ be a
projective resolution of the $B_1^{\op}$-module $U$. Applying $\Hom_{B_1^{\op}}(-,U)$
and using the fact that $\Ext_{B_1^{\op}}^1(U,U)=0$, we obtain
an exact sequence
$$0\rightarrow B_2\rightarrow\Hom_{B_1^{\op}}(Q_0,U)\rightarrow
\Hom_{B_1^{\op}}(Q_1,U)\rightarrow0$$
of $B_2$-modules, and clearly, $\Hom_{B_1^{\op}}(Q_i,U)\in\add_{B_2}U$
for $i=0,1$. Thus we proved condition (c) of a tilting module.

Thus, $U$ is a tilting $B_2$-module, and consequently,
$B_1$, $B_2$ are derived equivalent.
\end{proof}

\begin{lemma}
\label{xxlem6.3}
Let $B_1$ and $B_2$ be algebras such that $B_2$ is
{\rm{(}}Auslander{\rm{)}}
Gorenstein. Suppose that $\partial$ satisfies $\gamma_{0,0}(B_2,B_1)^r$.
Let $M$ be a right $B_2$-module with $\pd M\leq 1$ and $U$ be
a $(B_1,B_2)$-bimodule such that $\pd U_{B_2}<\infty$.
If $\Ext_{B_2}^1(M,B_2)\in\modu_0B_2^{\op},$ then
$\Ext_{B_2}^1(M,U)\in\modu_0B_1^{\op}$.
\end{lemma}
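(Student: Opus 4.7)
The plan is to identify $\Ext^1_{B_2}(M,U)$ with the tensor product $U\otimes_{B_2}\Ext^1_{B_2}(M,B_2)$ as left $B_1$-modules and then invoke the $\gamma$-hypothesis to transfer the membership in $\modu_0 B_2^{\op}$ of the right-hand factor to the desired membership in $\modu_0 B_1^{\op}$. To set up the isomorphism, I would use $\pd_{B_2}M\leq 1$ to pick a finite projective resolution $0\to P_1\to P_0\to M\to 0$ with each $P_i$ finitely generated projective as a right $B_2$-module, and apply $\Hom_{B_2}(-,B_2)$ to obtain the four-term exact sequence of left $B_2$-modules
$$0\longrightarrow \Hom_{B_2}(M,B_2)\longrightarrow P_0^{\vee}\longrightarrow P_1^{\vee}\longrightarrow \Ext^1_{B_2}(M,B_2)\longrightarrow 0,$$
where $P_i^{\vee}:=\Hom_{B_2}(P_i,B_2)$.

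The key observation is that for any finitely generated projective right $B_2$-module $P$ and any $(B_1,B_2)$-bimodule $U$, the evaluation map supplies a natural isomorphism $\Hom_{B_2}(P,U)\cong U\otimes_{B_2}P^{\vee}$ of left $B_1$-modules (obvious when $P=B_2$, then extended by additivity and passage to direct summands). Combining this identification with the right-exactness of $U\otimes_{B_2}-$ applied to the tail $P_0^{\vee}\to P_1^{\vee}\to \Ext^1_{B_2}(M,B_2)\to 0$ yields
$$\Ext^1_{B_2}(M,U)\;\cong\;\coker\bigl(U\otimes_{B_2}P_0^{\vee}\to U\otimes_{B_2}P_1^{\vee}\bigr)\;\cong\;U\otimes_{B_2}\Ext^1_{B_2}(M,B_2)$$
as left $B_1$-modules.

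Finally, I would apply the $\gamma_{0,0}$ hypothesis to the bimodule ${}_{B_1}U_{B_2}$, taking $N:=\Ext^1_{B_2}(M,B_2)\in\modu_0 B_2^{\op}$, to conclude $U\otimes_{B_2}N\in\modu_0 B_1^{\op}$, which is the assertion. The only point requiring care is matching the side conventions of the $\gamma$-hypothesis in Definition \ref{xxdef1.8} against the actual bimodule structure of $U$ and the direction of the tensor product being controlled; on the other hand, $\injdim B_2<\infty$ and $\pd_{B_2}U<\infty$ do not appear to be needed in the argument itself and are presumably kept in the statement so the lemma aligns with the hypotheses imposed in the subsequent applications (in particular in Theorem \ref{xxthm6.2}).
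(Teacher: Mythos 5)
Your proof is correct and, while it reaches the same key isomorphism as the paper, it does so by a genuinely more elementary route. The paper invokes the Ischebeck spectral sequence from Levasseur,
\begin{equation}
\notag
\Tor_p^{B_2}\bigl(U,\Ext^q_{B_2}(M,B_2)\bigr)\Rightarrow\Ext^{q-p}_{B_2}(M,U),
\end{equation}
and then notes that the $E_2$-page collapses because $\pd M\leq 1$, extracting the isomorphism $\Tor_0^{B_2}(U,\Ext^1_{B_2}(M,B_2))\cong\Ext^1_{B_2}(M,U)$ from the degeneration. You instead construct this isomorphism by hand, using the four-term exact sequence obtained by dualizing a length-one projective resolution, the natural identification $\Hom_{B_2}(P,U)\cong U\otimes_{B_2}P^{\vee}$ for $P$ finitely generated projective, and right-exactness of $U\otimes_{B_2}-$. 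The two arguments are arriving at the same place, but your approach has a real dividend that you explicitly flag: the extra hypotheses $\injdim B_2<\infty$ and $\pd_{B_2}U<\infty$, which in the paper serve to guarantee the existence and convergence of the Ischebeck spectral sequence, play no role in your direct construction and could be dropped from the statement. Your caution about matching the $\gamma_{n,i}(A,B)^{r}$ side conventions against the bimodule ${}_{B_1}U_{B_2}$ is also well-placed: by Definition~\ref{xxdef1.8}(5) the condition that for $N\in\modu_0 B_2^{\op}$ one gets $U\otimes_{B_2}N\in\modu_0 B_1^{\op}$ is literally the $j=0$ case of $\gamma_{0,0}(B_1,B_2)^{r}$, whereas the lemma as printed writes $\gamma_{0,0}(B_2,B_1)^{r}$; this appears to be a transposition of indices in the paper's statement (harmless in practice since the results applying the lemma, e.g.\ Hypothesis~\ref{xxhyp6.5} via Lemma~\ref{xxlem7.2}(2) or Lemma~\ref{xxlem8.2}(2), supply $\gamma_{k_1,k_2}$ for all pairs), but you are right to notice that the formal bookkeeping requires care.
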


\begin{proof}
By \cite[Section 5 (b.1)]{Le}, there is an Ischebeck spectral sequence
$$\Tor_p^{B_2}(U,\Ext_{B_2}^q(M,B_2))\Rightarrow \Ext_{B_2}^{q-p}(M,U).$$
Since $\pd M\leq 1$, the $E_2$-page of this spectral sequence has only
two nonzero columns. Therefore
$$\Tor_0^{B_2}(U,\Ext_{B_2}^1(M,B_2))\cong \Ext_{B_2}^{1}(M,U).$$
Note that
$$\Tor_0^{B_2}(U,\Ext_{B_2}^1(M,B_2))=U\otimes_{B_2}\Ext^1_{B_2}(M,B_2)
\in\modu_0{B_1}^{\op}$$
by $\gamma_{0,0}(B_2,B_1)^r$ condition. Therefore,
$\Ext_{B_2}^1(M,U)\in\modu_0 B_1^{\op}$.
\end{proof}

\begin{remark}
\label{xxrem6.4}
If $\partial=\GK$ and $B$ is affine over $\Bbbk$, then $M\in \modu_0 B$
is equivalent to $M$ being finite dimensional over $\Bbbk$. In this
case, $\partial$ automatically satisfies $\gamma_{0,i}$ for all $i$.
\end{remark}

\begin{hypothesis}
\label{xxhyp6.5}
We assume
\begin{enumerate}
\item[(1)]
Hypothesis \ref{xxhyp3.8} holds.
\item[(2)]
$\gamma_{0,0}(A,B)$ for all $A,B\in {\mathcal A}$.
%\item[(3)]
%Hypothesis \ref{xxhyp5.1} for all $A\in {\mathcal A}$.
%\footnote{This is not needed.}
\end{enumerate}
\end{hypothesis}

Next we prove a version of Theorem \ref{xxthm0.6}(2).

\begin{theorem}
\label{xxthm6.6}
Assume Hypothesis \ref{xxhyp6.5}. Let $A\in {\mathcal A}$
be an algebra with $\partial(A)=3$. Suppose
that $(B_i, _{B_i}\!(M_i)_{A}, _{A}\!(N_i)_{B_i})$ are
two $NQRs$ of $A$ for $i=1,2$. Then $B_1$ and $B_2$ are
derived equivalent.
\end{theorem}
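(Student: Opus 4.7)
The plan is to produce a single bimodule $_{B_1}U_{B_2}$ which will serve as a tilting module over $B_2$ with endomorphism ring $B_1$, so that Theorem \ref{xxthm6.2} applies. The starting point is Theorem \ref{xxthm3.15}: set $T := M_1\otimes_A N_2$ and $\widetilde{T} := M_2\otimes_A N_1$, which are $(d-2)$-isomorphic to $B_1$ and $B_2$ respectively after tensoring. Running the argument of Theorem \ref{xxthm3.15} (via Proposition \ref{xxpro3.9}) produces a bimodule $_{B_1}U_{B_2}$, reflexive on both sides, with $B_1\cong \End_{B_2}(U)$ and $B_2^{\op}\cong \End_{B_1^{\op}}(U)$. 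This gives conditions (1), (4), and the endomorphism half of (5) of Theorem \ref{xxthm6.2} for free.

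Next I would verify the projective dimension bound $\pd_{B_2}U\leq 1$. By Lemma \ref{xxlem4.1}, $\injdim B_2\leq \partial(B_2)=3$, and since $B_2$ is Auslander regular, $\gldim B_2=\injdim B_2\leq 3$. Corollary \ref{xxcor2.13} then yields $\pd_{B_2}U\leq \max\{0,\gldim B_2-2\}\leq 1$, giving condition (2). The symmetric bound $\pd_{B_1^{\op}}U\leq 1$ follows identically from the reflexivity of $U$ on the left.

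The main step is condition (3): $\Ext^1_{B_2}(U,U)\in\modu_0 B_1^{\op}$. I would attack this in two moves. First, using the spectral sequence \eqref{E2.13.1} for the reflexive module $U$ over the Auslander regular $3$-dimensional algebra $B_2$, Proposition \ref{xxpro2.18} collapses the $E_2$-page to essentially two nonzero terms, giving a canonical isomorphism
\[
\Ext^1_{B_2}(U,B_2)\;\cong\; \Ext^3_{B_2^{\op}}\bigl(\Ext^1_{B_2}(U,B_2),\,B_2\bigr).
\]
Now, for any nonzero finitely generated left $B_2$-module $L$, the Auslander condition forces $j(\Ext^3_{B_2^{\op}}(L,B_2))\geq 3$, hence by the $\partial$-$\CM$ property $\partial(\Ext^3_{B_2^{\op}}(L,B_2))\leq 0$. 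Applying this with $L=\Ext^1_{B_2}(U,B_2)$ shows $\Ext^1_{B_2}(U,B_2)\in \modu_0 B_2^{\op}$. Second, I invoke Lemma \ref{xxlem6.3}: since $\pd_{B_2}U\leq 1$, $\pd U_{B_2}<\infty$, $B_2$ is Auslander-Gorenstein, and the $\gamma_{0,0}$ condition is available from Hypothesis \ref{xxhyp6.5}(2), we conclude $\Ext^1_{B_2}(U,U)\in \modu_0 B_1^{\op}$, which is condition (3). Swapping the roles of $B_1$ and $B_2$ throughout (the hypotheses on $U$ are left-right symmetric by Lemma \ref{xxlem3.7}(3) together with the symmetry of the setup) gives the rest of (5). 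All five conditions of Theorem \ref{xxthm6.2} are then in place, so $U$ is a tilting $B_2$-module with endomorphism ring $B_1$, and $B_1$ and $B_2$ are derived equivalent.

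The main obstacle I anticipate is the spectral sequence step: one needs Auslander regularity (not merely Auslander-Gorenstein) to force the many vanishings on the $E_2$-page of Proposition \ref{xxpro2.18}, and one must carefully track that the identification $E_2^{10}\cong E_2^{31}$ is genuine (coming from a $d_2$-differential being an isomorphism), so that the Auslander/$\CM$ argument applied to $L=\Ext^1_{B_2}(U,B_2)$ really pins down $\Ext^1_{B_2}(U,B_2)$ itself and not merely a subquotient. Once that is granted, the rest is a clean deployment of Theorem \ref{xxthm6.2}.
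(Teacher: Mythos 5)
Your overall strategy is the same as the paper's: use Proposition \ref{xxpro3.9}, Theorem \ref{xxthm3.15} and Corollary \ref{xxcor2.13} to produce a reflexive bimodule $_{B_1}U_{B_2}$ with $B_1\cong\End_{B_2}(U)$ and $\pd_{B_2}U\leq 1$, and then verify the hypotheses of Theorem \ref{xxthm6.2}, with the crux being condition (3) via Proposition \ref{xxpro2.18} and Lemma \ref{xxlem6.3}. Conditions (1), (2), (4), (5) are handled correctly.

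However, the central step in your verification of condition (3) is wrong. You assert that Proposition \ref{xxpro2.18} yields a canonical isomorphism $\Ext^1_{B_2}(U,B_2)\cong\Ext^3_{B_2^{\op}}(\Ext^1_{B_2}(U,B_2),B_2)$, and then apply the Auslander condition plus the $\CM$ property to the right-hand side to deduce that the left-hand side has $\partial$-dimension zero. But Proposition \ref{xxpro2.18} gives $E_2^{10}\cong E_2^{31}$, i.e.\ $\Ext^1_{B_2^{\op}}(U^{\vee},B_2)\cong\Ext^3_{B_2^{\op}}(\Ext^1_{B_2}(U,B_2),B_2)$, which is an isomorphism between two \emph{right} $B_2$-modules and does not involve $\Ext^1_{B_2}(U,B_2)$ itself; that latter object is a \emph{left} $B_2$-module, so the isomorphism you wrote is not even side-consistent. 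As it stands, your argument only establishes $\partial(\Ext^3_{B_2^{\op}}(L,B_2))\leq 0$ for $L=\Ext^1_{B_2}(U,B_2)$, which says nothing about $\partial(L)$ without the spurious identification. The correct (and shorter) deduction is the one in the paper: Proposition \ref{xxpro2.18} tells you that $E_2^{p1}=\Ext^p_{B_2^{\op}}(\Ext^1_{B_2}(U,B_2),B_2)=0$ for all $p\neq 3$, so either $\Ext^1_{B_2}(U,B_2)=0$ (in which case $U$ is projective and condition (3) is trivial), or $j_{B_2^{\op}}(\Ext^1_{B_2}(U,B_2))=3$ and the $\partial$-$\CM$ property directly gives $\partial(\Ext^1_{B_2}(U,B_2))=3-3=0$. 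Once this is fixed, Lemma \ref{xxlem6.3} (using $\gamma_{0,0}$ from Hypothesis \ref{xxhyp6.5}) finishes condition (3) exactly as you proposed, and the rest of the proof stands.
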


\begin{proof} We need to verify the hypotheses in Theorem
\ref{xxthm6.2}.

By Proposition \ref{xxpro3.9}, Theorem \ref{xxthm3.15} and
Corollary \ref{xxcor2.13}, there exists a bimodule $_{B_1}\!U_{B_2}$
which is reflexive on both sides such that $B_1\cong\End_{B_2}(U)$
and $\pd_{B_2}U\leq 1$. Hence hypotheses (1,2,4) in Theorem \ref{xxthm6.2}
hold. To show hypothesis (3) in Theorem \ref{xxthm6.2}, we follow
Proposition \ref{xxpro2.18}. There are two cases that should be considered:

Case 1: $E_2^{31}=\Ext^3_{B_2^{\op}}(\Ext_{B_2}^1(U,B_2),B_2)=0$.
By Proposition \ref{xxpro2.18},
$$\Ext^i_{B_2^{\op}}(\Ext^j_{B_2}(U,B_2),B_2)=0$$
for all $(i,j)$ except for $(i,j)=(0,0)$. This implies that
$U\in\proj B_2$. Then Theorem \ref{xxthm6.2}(3) holds trivially.

Case 2: $E_2^{31}\neq 0$. Then $\Ext_{B_2}^1(U,B_2)\neq 0$, and
by Proposition \ref{xxpro2.18},
$$j_{B_2^{\op}}(\Ext_{B_2}^1(U,B_2))=3.$$
Since $B_2$ is $\partial$-$\CM$, we have
$$\partial_{B_2^{\op}}(\Ext_{B_2}^1(U,B_2))=
\partial_{B_2^{\op}}(B_2)-j_{B_2^{\op}}(\Ext_{B_2}^1(U,B_2))
=3-3=0,$$
namely, $\Ext_{B_2}^1(U,B_2)\in\modu_0B_2^{\op}$.
By Lemma \ref{xxlem6.3}, $\Ext_{B_2}^1(U,U)\in\modu_0B_1^{\op}$,
which is Theorem \ref{xxthm6.2}(3).

Up to this point, we have proved conditions (1,2,3,4) in Theorem
\ref{xxthm6.2}. By symmetry, Theorem \ref{xxthm6.2}(5) holds.
Therefore, by Theorem \ref{xxthm6.2}, $B_1$ and $B_2$ are derived
equivalent.
\end{proof}

\section{Connections between NQRs and NCCRs}
\label{xxsec7}

In this section we show that Van den Bergh's noncommutative crepant
resolutions (NCCRs) are in fact equivalent to noncommutative
quasi-resolutions (NQRs) in the commutative or central-finite case.
We use the definition given in \cite[Section 8]{IR} which is slightly
more general than original definition, see Definition \ref{xxdef0.2}.

Let $R$ be a noetherian commutative domain with finite Krull dimension.
Let ${\mathcal A}_{R,\K}$ be the category of algebras that are
module-finite $R$-algebras with $\partial$ being the Krull dimension ($\K$).
As explained in Example \ref{xxex3.1}(3), we need to specify modules too.
As usual, one-sided modules are just usual modules, but bimodules are
assumed to be $R$-central.

\begin{lemma}
\label{xxlem7.1}
Retain the notation as above. Let $A,B\in\mathcal{A}_{R,\K}$.
Then Hypothesis \ref{xxhyp1.3} holds.
\end{lemma}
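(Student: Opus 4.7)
The plan is to check Hypothesis \ref{xxhyp1.3}$(1)$, $(2)$, $(3)$ in turn, reducing each to classical facts about Krull dimension of modules over a module-finite algebra. The first two are essentially immediate; the real content is the bimodule symmetry in $(3)$, which should be pulled back to the commutative base $R$.

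For condition $(1)$, I would recall that on $\modu A$ (resp.\ $\modu B$) the Krull dimension is an exact dimension function in the sense of \cite[Ch.~6]{MR}: for any short exact sequence $0\to M'\to M\to M''\to 0$ of finitely generated modules one has $\K(M)=\max\{\K(M'),\K(M'')\}$. Extending $\K$ to all of $\Mod A$ via \eqref{E1.2.2}, as the authors observe immediately after that formula, automatically produces conditions (c) and (d) of Definition \ref{xxdef1.2}. For condition $(2)$, since $A$ is module-finite over $R$, the right $A$-submodules of $A$ form a sublattice of the $R$-submodules, so $\K(A_A)\leq \K(A_R)=\K(R/\ann_R A)\leq \K(R)<\infty$, and similarly for $B$; thus $\K(A),\K(B)\in\N$.

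Condition $(3)$ is the main point. The key input is the following standard fact (a direct consequence of the arguments behind \cite[Corollary~6.4.13]{MR}): if $C$ is a module-finite $R$-algebra and $N$ is a finitely generated $C$-module, then $\K(N_C)=\K(N_R)$. The inequality $\K(N_C)\leq \K(N_R)$ is immediate from the inclusion of submodule lattices, while the reverse uses that every descending chain of $R$-submodules of $N$ can be refined/controlled by the finite $R$-presentation of $C$, and that $\K$ is symmetric over the commutative ring $R$. Granted this, let $M$ be an $R$-central $(A,B)$-bimodule that is finitely generated as a right $B$-module; then $M$ is also a finitely generated $R$-module, and $\K(M_B)=\K(M_R)$. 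For any finitely generated left $A$-submodule $N\subseteq M$, $N$ is automatically finitely generated over $R$ (because $A$ is), hence $\K({}_A N)=\K(N_R)\leq \K(M_R)=\K(M_B)$. Taking the supremum over such $N$ using \eqref{E1.2.2} yields $\K({}_A M)\leq \K(M_B)$. The symmetric version follows by switching $A$ and $B$, and when $M$ is finitely generated on both sides we obtain equality $\K({}_A M)=\K(M_R)=\K(M_B)$.

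The main subtlety I anticipate is being careful with \eqref{E1.2.2} in the one-sided finitely generated case: one must verify that the supremum over finitely generated left $A$-submodules of $M$ is dominated by the single quantity $\K(M_B)$, but this follows cleanly once one observes that every such submodule is finitely generated over the common central $R$, so the identity $\K({}_A N)=\K(N_R)$ applies uniformly. Everything else is routine invocation of commutative noetherian theory and the module-finite hypothesis.
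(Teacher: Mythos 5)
Your proof is correct and rests on the same underlying idea as the paper's: pull every dimension statement back to Krull dimension over the commutative base $R$, exploiting module-finiteness and $R$-centrality. The paper's own proof is much terser, however. It cites \cite[Lemma~1.3]{BHZ2} for exactness and symmetry of $\K$ in this setting, declares (1) and (2) clear, and for (3) makes the single observation that an $R$-central $(A,B)$-bimodule finitely generated on one side is automatically finitely generated over $R$ (since $B$ is module-finite), hence automatically finitely generated over $A$ as well; thus Hypothesis~\ref{xxhyp1.3}(3) collapses to the already-established symmetry statement. Your argument instead unwinds the commutative reduction by hand, proving the intermediate fact $\K(N_C)=\K(N_R)$ for module-finite $R$-algebras $C$ and then bounding $\K({}_A M)$ term by term via~\eqref{E1.2.2}. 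This works, but the detour through~\eqref{E1.2.2} is unnecessary: once you have noticed (as you essentially do) that $M$ is finitely generated over $R$, it follows that $M$ is Noetherian, hence finitely generated, as a left $A$-module, so you are in the two-sided finitely generated situation and can invoke symmetry directly. What your version buys is self-containment — no reliance on the external reference \cite{BHZ2} — at the cost of some extra bookkeeping; what the paper's version buys is brevity.
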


\begin{proof} By \cite[Lemma 1.3]{BHZ2}, $\partial:=\K$ is exact and
symmetric. Hypothesis \ref{xxhyp1.3}(1) and (2) are clear. It remains
to show (3). By definition all bimodules are central over $R$. If
$_{A}M_B$ is finitely generated over $B$, then it is finitely
generated over $R$ as every algebra is module-finite over $R$. Then
$M$ is finitely generated over $A$. This implies that Hypothesis
\ref{xxhyp1.3}(3) is equivalent to the fact that $\partial$ is symmetric.
\end{proof}

We recall a definition from \cite[Definition 1.1(5)]{BHZ2}.
Let $A$ and $B$ be two algebras. We say $\partial$ is
{\it $(A, B)_i$-torsitive} if, for every $(A,B)$-bimodule $M$
finitely generated on both sides and every finitely generated
right $A$-module $N$, one has
\begin{equation}
\notag%\label{E7.1.1}\tag{E7.1.1}
\partial(\Tor^A_j(N,M)_B)\leq \partial(N_A)
\end{equation}
for all $j\leq i$. Part (1) of the following lemma was
proven in \cite{BHZ2}.

\begin{lemma}
\label{xxlem7.2}
Let $A$ and $B$ be two algebras in ${\mathcal A}_{R,\K}$.
\begin{enumerate}
\item[$(1)$] \cite[Lemma 3.1]{BHZ2}
$\partial$ is $(A,B)_{\infty}$-torsitive.
\item[$(2)$]
$\gamma_{k_1,k_2}(A,B)$ hold for all $k_1,k_2$
{\rm{(}}see Definition \ref{xxdef1.8}{\rm{)}}.
\item[$(3)$]
Hypothesis \ref{xxhyp3.8} holds.
\item[$(4)$]
Hypothesis \ref{xxhyp6.5} holds.
\end{enumerate}
\end{lemma}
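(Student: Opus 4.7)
The plan is to derive everything from Part (1), which is imported wholesale from \cite[Lemma 3.1]{BHZ2}, together with Lemma \ref{xxlem7.1}. The statement is essentially a repackaging exercise, so the proposal is short.

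For Part (2), I would unwind Definition \ref{xxdef1.8}. Fix a bimodule ${}_A M_B$ finitely generated on both sides and a right $A$-module $N \in \modu_n A$. Part (1) gives $\partial(\Tor_j^A(N,M)_B) \leq \partial(N_A) \leq n$ for every $j \geq 0$, so $\Tor_j^A(N,M) \in \modu_n B$. Since this holds for all $j$ and all such $M$, we obtain $\gamma_{n,i}(A,B)^l$ for every pair $(n,i)$. The right-handed analogue $\gamma_{n,i}(A,B)^r$ is the parallel statement with the roles of $A$ and $B$ swapped; it follows from the same torsitivity result applied to $M$ regarded as a $(B^{\op},A^{\op})$-bimodule, which is legitimate because Lemma \ref{xxlem7.1} confirms that $\K$ is symmetric and because $\mathcal{A}_{R,\K}$ is closed under passage to the opposite algebra. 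Combining the two sides yields $\gamma_{k_1,k_2}(A,B)$ for all $k_1,k_2$.

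Parts (3) and (4) are then immediate bookkeeping. Hypothesis \ref{xxhyp3.8} asks for Hypothesis \ref{xxhyp1.3} (supplied by Lemma \ref{xxlem7.1}) together with $\gamma_{d-2,1}(A,B)$, a single instance of Part (2). Hypothesis \ref{xxhyp6.5} additionally requires $\gamma_{0,0}(A,B)$, another instance of Part (2). There is no genuine obstacle in this lemma: the essential analytic content lives in the torsitivity statement \cite[Lemma 3.1]{BHZ2}, and the role of the present lemma is only to translate that result into the language of the $\gamma$-conditions and to record that the global hypotheses used throughout Sections \ref{xxsec3}--\ref{xxsec6} are automatic in the central-finite, Krull-dimension setting.
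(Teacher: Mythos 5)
Your proposal is correct and follows the same route as the paper, which simply asserts that parts (2)--(4) follow from part (1), Lemma \ref{xxlem7.1}, and the definitions; you have merely unwound the terse argument, including the (correct) observation that the right-handed $\gamma$-condition is obtained by passing to $(B^{\op},A^{\op})$, a category closed under taking opposites. One small remark: the symmetry of $\K$ is not actually needed to pass between $N$ as a left $B$-module and $N$ as a right $B^{\op}$-module (these are literally the same data); symmetry is already baked into Hypothesis \ref{xxhyp1.3} via Lemma \ref{xxlem7.1}, which is where it is genuinely used.
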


\begin{proof}
(2) This follows from part (1) and the definition.

(3) This follows from Lemma \ref{xxlem7.1} and a special case of
part (2).

(4) This follows from part (3) and another special case of
part (2).
\end{proof}

For the purpose of this paper, we only need $\gamma_{0,0}(A,B)$
and $\gamma_{1,1}(A,B)$. But it is good to know that
$\gamma_{k_1,k_2}(A,B)$ hold for all $k_1,k_2$.
For the rest of this section, $\CM$ stands for ``Cohen-Macaulay''
in the classical sense in commutative algebra, while $\K$-$\CM$
is defined in Definition \ref{xxdef2.3} by taking the dimension
function $\partial$ to be the Krull dimension $\K$. By
\cite[p.1435]{BM}, when $R$ is commutative and noetherian, then
$R$ is $\K$-$\CM$ if and only if $R$ is $\CM$ and equi-codimensional.
The following lemma is known.

\begin{lemma}
\label{xxlem7.3}
Let $R$ be a commutative $d$-dimensional $\CM$ equi-codimensional
normal domain. Let $A$ be a module-finite $R$-algebra
and $K\in\modu A$. Let $s$ be an integer between $0$ and
$d-2$. Then $K\in\modu_{d-2-s} A$ if and only if
$K_{\mathfrak{p}}=0$ for every prime ideal $\mathfrak{p}$
of $R$ with $\htp(\mathfrak{p})\leq 1+s$.
\end{lemma}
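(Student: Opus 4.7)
The plan is to reduce everything to a statement about the support of $K$ as a finitely generated $R$-module, exploit the fact that in a CM equi-codimensional ring heights and codimensions behave well, and then translate back.

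First, since $A$ is module-finite over $R$ and $K \in \modu A$, I would invoke the standard identification of Krull dimensions: $\K_A(K) = \K_R(K)$. Every $A$-submodule of $K$ is an $R$-submodule, while conversely any $R$-submodule generates an $A$-submodule of the same Krull dimension (using finite generation of $A$ over $R$). This reduces the question to a purely commutative one. Next, for the finitely generated $R$-module $K$, I would use that $\K_R(K) = \dim(R/\ann_R K) = \max\{\dim(R/\mathfrak{p}) : \mathfrak{p} \in \text{Min}(\ann_R K)\}$.

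The key step is to translate $\dim(R/\mathfrak{p})$ into height data. Because $R$ is CM and equi-codimensional of dimension $d$, the formula
\[
\htp(\mathfrak{p}) + \dim(R/\mathfrak{p}) = d
\]
holds for every prime $\mathfrak{p} \subseteq R$. Applying this to minimal primes over $\ann_R K$ yields
\[
\K_R(K) \;=\; d - \min\bigl\{\htp(\mathfrak{p}) : \mathfrak{p} \supseteq \ann_R K\bigr\} \;=\; d - \htp(\ann_R K).
\]
Consequently, $K \in \modu_{d-2-s} A$ is equivalent to $\htp(\ann_R K) \geq 2+s$.

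Finally, I would rewrite the height condition as a support condition: since $K$ is a finitely generated $R$-module, $\supp_R(K) = V(\ann_R K)$, so $K_\mathfrak{p} = 0$ if and only if $\ann_R K \not\subseteq \mathfrak{p}$. Therefore $K_\mathfrak{p} = 0$ for all $\mathfrak{p}$ with $\htp(\mathfrak{p}) \leq 1+s$ is exactly the statement that no prime of height $\leq 1+s$ contains $\ann_R K$, i.e., $\htp(\ann_R K) \geq 2+s$, closing the equivalence. The only mildly technical point is verifying the first step $\K_A(K) = \K_R(K)$; this is well known (e.g.\ \cite[Ch.6]{MR}) but should be cited cleanly since the equi-codimensional/CM hypotheses play no role there. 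The real content is the CM-equi-codimensional catenary formula, which converts the dimension bound into a height bound without loss.
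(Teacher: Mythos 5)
Your proof is correct and follows essentially the same approach as the paper: reduce to $A=R$ via $\K_A(K)=\K_R(K)$, then use the CM-equi-codimensional identity $\htp(\mathfrak{p})+\dim(R/\mathfrak{p})=d$ (which is \cite[Theorem 3.1(iv)]{BM} in the paper) to convert dimension into height. The only stylistic difference is that the paper first reduces to a critical module with prime annihilator via \cite[Lemma 6.2.11]{MR} and then argues by contradiction, whereas you work directly with $\ann_R K$ and the support of $K$; your route is a bit more streamlined since it avoids the noncommutative critical-module reduction in a spot where the problem has already become purely commutative.
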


\begin{proof} Since $\K M_R=\K M_A$, it suffices to
consider the case $A=R$. By \cite[Lemma 6.2.11]{MR},
we can always assume that $K$ is a critical $R$-module
such that $\mathfrak{q}:=\ann_R(K)=\{x\in R|xK=0\}$ is
a prime ideal of $R$. In this case, $K$ is an essential
$R/{\mathfrak{q}}$-module.

Suppose that $K_{\mathfrak{p}}=0$ for all prime ideals
$\mathfrak{p}$ with $\htp(\mathfrak{p})\leq 1+s$. If
$\K(K)\geq d-1-s$, by \cite[Theorem 3.1(iv)]{BM}, we have
$$\htp(\mathfrak{q})=\K R-\K R/\mathfrak{q}=\K R-\K K\leq 1+s.$$
By the definition of $\mathfrak{q}$, $K_{\mathfrak{q}}\neq 0$,
which is a contradiction. Therefore $\K(K)\leq d-2-s$, as desired.

Conversely, suppose that $\K(K)\leq d-2-s$. Then $\K R/\mathfrak{q}
\leq d-2-s$. By \cite[Theorem 3.1(iv)]{BM}, we have
$\htp(\mathfrak{q})\geq 2+s$. Therefore $K_{\mathfrak{p}}=0$ for
all prime ideal $\mathfrak{p}$ with $\htp(\mathfrak{p})\leq 1+s$.
\end{proof}

\begin{remark}
\label{xxrem7.4}
In the papers \cite{VdB2, IR}, the commutative base ring $R$ is a
normal Gorenstein domain, which is automatically CM equi-codimensional
and normal. In \cite[Theorem 1.5]{IW1}, it is assumed that $R$ is a CM
equi-codimensional normal domain. Hence the first hypothesis
of Lemma \ref{xxlem7.3} holds.
\end{remark}

\begin{proposition}
\label{xxpro7.5}
Let $R$ be a commutative noetherian $\CM$ equi-codimensional normal
domain of dimension $d$. Let $A$ be a module-finite $R$-algebra that
is a maximal $\CM$ $R$-module. If $M$ gives rise to a NCCR of $A$
in the sense of Definition \ref{xxdef0.2}(2), then
$(\Omega, _{\Omega}\!M_{A},_{A}\!(M^\vee)_{\Omega})$
is a $NQR$ of $A$. In other words,
\begin{enumerate}
\item[$(1)$]
$\Omega$ is an Auslander regular $\K$-$\CM$ algebra with
$\gldim\Omega=\K\Omega=d$.
\item[$(2)$]
$M\otimes_{A}M^\vee\cong_{d-2}\Omega$ and
$M^\vee\otimes_{\Omega}M\cong_{d-2}A.$
\end{enumerate}
\end{proposition}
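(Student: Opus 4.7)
My plan is to verify the two assertions separately, using the NCCR hypotheses on $M$ and the local-to-global dictionary provided by Lemma \ref{xxlem7.3}.

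For assertion (1), since $\Omega=\End_A(M)$ is module-finite over $R$ (because $M$ is finitely generated over $A$, hence over $R$), its Krull dimension satisfies $\K\Omega=\K R=d$ by \cite[Corollary 6.4.13]{MR} together with equi-codimensionality. The non-singularity assumption $\gldim \Omega_{\mathfrak{p}}=\K R_{\mathfrak{p}}$ for every prime $\mathfrak{p}\subseteq R$ combined with the standard fact that global dimension of a module-finite $R$-algebra is the supremum of its local global dimensions yields $\gldim \Omega=d$. The fact that $\Omega$ is a non-singular $R$-order over a CM equi-codimensional base makes $\Omega$ a maximal CM $R$-module, from which $\K$-$\CM$ follows via an Auslander--Buchsbaum computation checked prime-by-prime. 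Finally, Auslander regularity of $\Omega$ is a classical property of non-singular orders over a CM base, as can be verified by localizing and checking the Auslander condition at each prime (cf.\ the discussion of non-singular orders in \cite[Section 2]{IR} or homologically homogeneous rings in \cite{VdB2}).

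For assertion (2), consider the two natural bimodule maps
\[
\mu: M\otimes_A M^{\vee}\longrightarrow \Omega,\qquad m\otimes f\longmapsto \bigl[x\mapsto m\cdot f(x)\bigr],
\]
an $(\Omega,\Omega)$-bimodule morphism, and
\[
\nu: M^{\vee}\otimes_{\Omega} M\longrightarrow A,\qquad f\otimes m\longmapsto f(m),
\]
an $(A,A)$-bimodule morphism. The height-one progenerator hypothesis states that $M_{\mathfrak{p}}$ is a progenerator of $A_{\mathfrak{p}}$ for every prime $\mathfrak{p}\subseteq R$ with $\htp(\mathfrak{p})\leq 1$. Since $M$ is finitely presented, localization commutes with $\Hom_A(M,A)$, giving $(M^{\vee})_{\mathfrak{p}}\cong \Hom_{A_{\mathfrak{p}}}(M_{\mathfrak{p}},A_{\mathfrak{p}})$ and $\Omega_{\mathfrak{p}}\cong \End_{A_{\mathfrak{p}}}(M_{\mathfrak{p}})$. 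Classical Morita theory for the progenerator $M_{\mathfrak{p}}$ then makes $\mu_{\mathfrak{p}}$ and $\nu_{\mathfrak{p}}$ isomorphisms. Consequently $\ker\mu,\,\coker\mu,\,\ker\nu,\,\coker\nu$ all vanish at every prime of $R$ of height $\leq 1$, and by Lemma \ref{xxlem7.3} applied with $s=0$ they lie in $\modu_{d-2}$ of the appropriate algebra. Combining with Remark \ref{xxrem1.6}(3) yields $M\otimes_A M^{\vee}\cong_{d-2}\Omega$ and $M^{\vee}\otimes_{\Omega}M\cong_{d-2}A$ as bimodules.

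The main obstacle will be the verification of Auslander regularity and $\K$-$\CM$ for $\Omega$, since these are not literally part of Definition \ref{xxdef0.2}. The cleanest route is to invoke the known Auslander-regular property of non-singular $R$-orders over a CM equi-codimensional normal base; an alternative is a direct local-to-global argument, reducing the Auslander condition to each $\Omega_{\mathfrak{p}}$, which is of finite global dimension equal to $\K R_{\mathfrak{p}}$ over a regular local ring $R_{\mathfrak{p}}$ (when $\mathfrak{p}$ is in the regular locus) or to a non-singular order over a CM local ring otherwise. Part (2), by contrast, is essentially a direct consequence of the height-one progenerator condition plus Lemma \ref{xxlem7.3}.
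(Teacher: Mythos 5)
Your proof follows essentially the same strategy as the paper's. For part (2), your argument is the paper's argument: form the evaluation maps $\mu$ and $\nu$, localize at height-$\leq 1$ primes, use the height-one progenerator hypothesis to make both maps isomorphisms there, and apply Lemma~\ref{xxlem7.3} (with $s=0$) plus Remark~\ref{xxrem1.6}(3) to conclude the $(d-2)$-isomorphisms. For part (1), the paper also gets $\K\Omega=d$ and $\K$-CM from the $R$-order property (citing \cite[Lemma 2.8(2), Theorem 4.8]{BM}), and then obtains Auslander regularity by observing that a non-singular $R$-order is a homologically homogeneous noetherian PI ring and invoking \cite[Theorem 1.4(1)]{SZ2}. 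Your ``main route'' for Auslander regularity is exactly this, minus the citation. One caveat about your ``alternative'' route: you suggest ``reducing the Auslander condition to each $\Omega_{\mathfrak{p}}$,'' but the Auslander condition does not naively localize --- it quantifies over all submodules of $\Ext^q_{\Omega}(M,\Omega)$, and submodules of $\Ext$ at a localization are not obviously of the form $N_{\mathfrak{p}}$ for submodules $N$ over $\Omega$. The Stafford--Zhang theorem on homologically homogeneous PI rings is precisely what packages this argument cleanly, and the correct move is to cite it rather than improvise a local-to-global reduction of the Auslander condition itself. Modulo supplying that reference, the proposal is sound and parallel to the paper.
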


\begin{proof}
(1) By the assumption, $R$ is equi-codimensional, $\Omega$ is a
module-finite $R$-algebra, and $\Omega$ is a maximal $\CM$
$R$-module, so, by \cite[Lemma 2.8(2) and Theorem 4.8]{BM},
$\Omega$ is a $\K$-$\CM$ algebra with $\K(\Omega)=\K(R)=d$.
Moreover, $\Omega$ being a nonsingular $R$-order means that it
is a homologically homogeneous noetherian PI ring. Then, by
\cite[Theorem 1.4(1)]{SZ2}, $\Omega$ is an Auslander regular
algebra with $\gldim \Omega=d$. The assertion follows.

(2) Let $\varphi: M^\vee \otimes_{\Omega}M\rightarrow A$ be the natural
evaluation map. Then there is an exact sequence
$$0\rightarrow K\rightarrow M^\vee \otimes_{\Omega}M
\xrightarrow{\varphi}A\rightarrow C\rightarrow0$$
with $K,C\in\modu  A$. By definition of a NCCR, $M\in\refl A$
is a height one progenerator of $A$, we have
$M_{\mathfrak p}^\vee \otimes_{\Omega_{\mathfrak p}}
M_{\mathfrak p}\cong A_{\mathfrak p}$ for
${\mathfrak p}\in\Spec(R)$ with $\htp({\mathfrak p})\leq 1$.
Therefore, $K_{\mathfrak p}=0=C_{\mathfrak p}$.
By Lemma \ref{xxlem7.3}, $K,C\in\modu_{d-2}R$. Combining with
the assumption that $A$ is a module-finite $R$-algebra,
$K,C\in\modu_{d-2}A$, namely,
$M^\vee \otimes_{\Omega}M\cong_{d-2}A$.

Similarly, there is a natural map
$$\alpha: M\otimes_{A}M^\vee\to \Hom_A(M,M)=:\Omega$$
such that $\alpha(n\otimes f)(m)=nf(m)$ for all
$f\in M^{\vee}$ and all $n,m\in M$. One can use the
above argument to show that
$M\otimes_{A}M^\vee\cong_{d-2}\Omega$, whence (2) follows.
\end{proof}

Conversely, a NQR is also a NCCR for Gorenstein singularities.

\begin{proposition}
\label{xxpro7.6}
Let $R$ be a commutative $d$-dimensional $\CM$ equi-codimensional
normal domain. Let $A$ be a module-finite $R$-algebra
that is a maximal $\CM$ $R$-module.  Suppose that $A$ is
Auslander-Gorenstein and $\K$-$\CM$, then a NQR of $A$, say $(B, M,N)$,
provides a NCCR $B$ of $A$ in the sense of Definition \ref{xxdef0.2}.
\end{proposition}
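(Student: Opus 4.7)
The plan is to take $M' := M^{\vee\vee}$ as the reflexive module witnessing the NCCR and verify that the pair $(M', B = \End_A(M'))$ satisfies Iyama-Reiten's Definition \ref{xxdef0.2}(2). First I would apply Corollary \ref{xxcor3.14} to the given NQR $(B,{}_BM_A,{}_AN_B)$ to obtain a $(B,A)$-bimodule $U := M^{\vee\vee}$, finitely generated and reflexive on both sides, satisfying $B \cong \End_A(U)$ and $A^{\op} \cong \End_{B^{\op}}(U)$; in parallel I would form $V := N^{\vee\vee}$. The Gabber-closure argument inside the proof of Proposition \ref{xxpro3.9} shows $U \cong_{d-2} M$ and $V \cong_{d-2} N$ (after first killing the torsion part on either side, which lies in $\modu_{d-1}$ and so is absorbed by the $(d-2)$-relations), so Lemma \ref{xxlem1.10} transports the NQR relations to
\[
U \otimes_A V \cong_{d-2} B, \qquad V \otimes_B U \cong_{d-2} A.
\]

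For condition $(i)$ of Definition \ref{xxdef0.2}(2) — that $U$ is a height-one progenerator over $A$ — I would localize at any prime $\mathfrak{p}$ of $R$ with $\htp(\mathfrak{p}) \leq 1$. By Lemma \ref{xxlem7.3} with $s=0$, the two $(d-2)$-isomorphisms above become honest isomorphisms after such localization, exhibiting $(U_{\mathfrak{p}}, V_{\mathfrak{p}})$ as a Morita equivalence between $A_{\mathfrak{p}}$ and $B_{\mathfrak{p}}$; in particular $U_{\mathfrak{p}}$ is a progenerator of $A_{\mathfrak{p}}$, as required.

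For condition $(ii)$ — that $B$ is a non-singular $R$-order — I would first note that $B$ is module-finite over $R$ (by the ambient category $\mathcal{A}_{R,\K}$) and, being $\K$-CM with $\K(B) = d = \K(R)$ over the CM equi-codimensional base $R$, is a maximal CM $R$-module by \cite[Lemma 2.8(2) and Theorem 4.8]{BM} (exactly as in the proof of Proposition \ref{xxpro7.5}(1)); hence $B$ is an $R$-order. By the NQR definition $B$ is Auslander regular, and Lemma \ref{xxlem4.1} combined with the $\K$-CM property applied to a simple $B$-module forces $\gldim B = \injdim B = d$. Since $B$ is then a noetherian PI ring (module-finite over commutative $R$) which is Auslander regular and $\K$-CM, I would invoke the converse direction of \cite[Theorem 1.4]{SZ2} to conclude that $B$ is homologically homogeneous, i.e.\ $\gldim B_{\mathfrak{p}} = \K R_{\mathfrak{p}}$ for every $\mathfrak{p} \in \Spec(R)$, which is exactly the non-singularity required.

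The main obstacle is step $(ii)$: the proof of Proposition \ref{xxpro7.5}(1) uses the implication ``homologically homogeneous $\Rightarrow$ Auslander regular'' from \cite{SZ2}, whereas here one needs the converse. Should a direct citation not be available in the precise form needed, I would fall back on a local Auslander-Buchsbaum argument: at any maximal ideal $\mathfrak{m}$ of $R$, $B_{\mathfrak{m}}$ has finite global dimension $\leq d$, so for the simple $B_{\mathfrak{m}}/J$-module $S$ one gets $\pd_{B_{\mathfrak{m}}} S = \dep B_{\mathfrak{m}} = \K R_{\mathfrak{m}}$ by the $\K$-CM property, forcing $\gldim B_{\mathfrak{m}} = \K R_{\mathfrak{m}}$, and the equality at arbitrary primes then follows by standard localization in Noetherian PI rings.
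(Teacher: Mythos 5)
Your proposal is correct and follows essentially the same route as the paper: replace $(M,N)$ by their Gabber/double-dual closures $(U,V)$ (which is exactly what Proposition \ref{xxpro3.9} produces), use Corollary \ref{xxcor3.14} (equivalently Lemma \ref{xxlem3.13}) together with Lemma \ref{xxlem7.2} to get $B\cong\End_A(U)$, localize via Lemma \ref{xxlem7.3} at height $\leq 1$ primes for the progenerator condition, and appeal to $\K$-CM plus Auslander regularity for non-singularity of the $R$-order. Two small remarks. First, your parenthetical claim that $U\cong_{d-2}M$ is not literally what the proof of Proposition \ref{xxpro3.9} yields: the torsion submodule $\tau(M)$ lives in $\modu_{d-1}$, not $\modu_{d-2}$, so $M$ and $M/\tau(M)$ are only $(d-1)$-isomorphic; Proposition \ref{xxpro3.9} finesses this by showing directly that $(M/\tau(M))\otimes_A N\cong_{d-2}B$ via the evaluation map $\psi$, rather than transporting a $(d-2)$-isomorphism $M\cong_{d-2}U$ through the tensor. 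You correctly apply the conclusion of Proposition \ref{xxpro3.9}, so the error is only in your description of its internal mechanism, not in the logic. Second, your worry about needing the converse direction of \cite[Theorem 1.4]{SZ2} is reasonable but ultimately unnecessary: that theorem is stated as an equivalence (for affine noetherian PI rings of finite global dimension, Auslander regular plus $\K$-Macaulay $\Leftrightarrow$ homologically homogeneous), which is why the paper is content to write ``it is easy to check that $B$ is a non-singular order.'' Your fallback local Auslander--Buchsbaum argument is a sound alternative but is not what the paper intends and would need more care (in particular about passing from maximal to arbitrary primes and about why the Jacobson radical of $B_{\mathfrak m}$ contains $\mathfrak m B_{\mathfrak m}$).
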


\begin{proof} Let $(B,M,N)$ be a NQR of $A$. Then $B$ is an Auslander regular
$\K$-CM algebra of Krull dimension $d$, and
$$M\otimes_A N\cong_{d-2} B, \quad
N\otimes_B M\cong_{d-2} A.$$
By Proposition \ref{xxpro3.9}, $(M,N)$ can be replaced by $(U,V)$
such that $(B,U,V)$ is also a NQR of $A$ and that $U$ and $V$ are reflexive
on both sides. By Lemmas \ref{xxlem3.13} and \ref{xxlem7.2},
$B\cong \End_{A}(U)$. Since $B$ is Auslander regular and
$\K$-CM, it is easy to check that $B$ is a non-singular order. It remains
to show that $U$ is a height one progenerator. By Proposition \ref{xxpro3.9},
we have
$$U\otimes_A V\cong_{d-2} B \quad {\text{and}} \quad
V\otimes_B U\cong_{d-2} A.$$
It follows from Lemma \ref{xxlem7.3} that
$$U_{\mathfrak{p}}\otimes_{A_{\mathfrak{p}}} V_{\mathfrak{p}}
\cong B_{\mathfrak{p}} \quad {\text{and}} \quad
V_{\mathfrak{p}}\otimes_{B_ {\mathfrak{p}}}U_{\mathfrak{p}}
\cong A_{\mathfrak{p}}$$
for every prime ideal $\mathfrak{p}$ of $R$ with
$\htp(\mathfrak{p})\leq 1$. Hence $U$ is a height one
progenerator of $A$. Therefore $U$ gives a NCCR of $A$.
\end{proof}

By the above two propositions, NCCRs are essentially equivalent to NQRs
when $A$ is Auslander-Gorenstein. Therefore Theorem \ref{xxthm0.4}(2b)
is essentially equivalent to Theorem \ref{xxthm6.6} in this setting.
In the next section, we will introduce more examples of NQRs in the
noncommutative setting. One advantage of NQRs is that they can be
defined for many algebras that are not Gorenstein (not even CM). In
this case we do not require $M$ or $N$ to be reflexive. Here is an
easy example.

\begin{example}
\label{xxex7.7}
Let $B$ be the commutative polynomial ring $\Bbbk[x_1,x_2,x_3]$
with the standard grading
and let $A$ be the subring of $B$ generated by $B_{\geq 2}$. Then
$A$ is noetherian of Krull dimension three and the Hilbert series of
$A$ is
$$H_A(t)=\frac{1}{(1-t)^3}-3t.$$
It is easy to see that $A$ is not normal and that $H_A(t)$ can not
be written as
$$\frac{1+a_1 t+a_2 t^2+\cdots +a_d t^d}{(1-t^{n_1})(1-t^{n_2})(1-t^{n_3})}$$
for any nonnegative integers $a_i, n_j$. By
\cite[Ex.21.17(b), p.551]{Ei}, $A$ is not CM. Then a NCCR of $A$
is not defined. But $A$ has a NQR as shown below.

Let $M=B$ as a $(B,A)$-bimodule and $N=B$ as an $(A,B)$-bimodule.
It is easy to see that
$$M\otimes_A N\cong_0 B, \quad
{\text{and}}\quad
N\otimes_B M\cong_0 A.$$
Consequently, $(B,M,N)$ is a NQR of $A$. As a consequence
of Proposition \ref{xxpro7.9} below, any two NQRs of $A$ are
Morita equivalent.

In addition, it is easy to check that $A$ has an isolated singularity
at the unique maximal graded ideal.
\end{example}

Though NQRs are weaker than NCCRs, not every algebra admits a NQR.

\begin{example}
\label{xxex7.8}
Let $A$ be an affine commutative Gorenstein algebra that does not
admit a NCCR. Then $A$ does not admit a NQR by Proposition
\ref{xxpro7.6}. For example, $A$ is an affine Gorenstein algebra
of dimension two with non-isolated singularities, then $A$ does
not admit either a NCCR or a NQR. By \cite[Theorem 1.2(2) and
Example 3.5]{Da}, there are isolated hypersurface singularities
of (any) even dimension $\geq 4$ that do not admit either a NCCR
or a NQR.
\end{example}

\begin{proposition}
\label{xxpro7.9}
Let $A$ be a module-finite  $R$-algebra with $\partial(A)=3$.
Suppose
\begin{enumerate}
\item[$(a)$]
$(B_i, _{B_i}\!(M_i)_{A}, _{A}\!(N_i)_{B_i})$ are two $NQRs$ of
$A$ for $i=1,2$, and
\item[$(b)$]
$B_1$ or $B_2$ is Azumaya.
\end{enumerate}
Then $B_1$ and $B_2$ are Morita equivalent.
\end{proposition}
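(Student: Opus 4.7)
The plan is to leverage Theorem \ref{xxthm6.6}, which already produces a derived equivalence between $B_1$ and $B_2$ by means of a tilting bimodule, and then to use the Azumaya hypothesis to upgrade this derived equivalence to a Morita equivalence. By Proposition \ref{xxpro3.9}, Theorem \ref{xxthm3.15} and the proof of Theorem \ref{xxthm6.6}, we obtain a $(B_1,B_2)$-bimodule $U$ that is reflexive on both sides, satisfies $B_1\cong\End_{B_2}(U)$, $B_2^{\op}\cong\End_{B_1^{\op}}(U)$, $\pd_{B_2}U\leq 1$, and $\Ext_{B_2}^1(U,U)=0$, and is a tilting right $B_2$-module by Theorem \ref{xxthm6.2}. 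A projective tilting module is automatically a progenerator: a tilting resolution $0\to B_2\to T_0\to\cdots\to T_t\to 0$ with $T_i\in\add U$ consists of projectives once $U$ is projective, and so splits to give $B_2\in\add U$. It therefore suffices to show that $U$ is projective over $B_2$.

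Without loss of generality assume $B_2$ is Azumaya over its center $Z:=Z(B_2)$. Since $B_2$ is module-finite and faithfully projective over $Z$, is Auslander regular with $\gldim B_2=3$, and is $\K$-$\CM$, the ring $Z$ is a commutative noetherian $\CM$ equi-codimensional normal domain of Krull dimension $3$ with $\gldim Z=\gldim B_2=3$, hence regular. The Azumaya algebra $B_2$ is étale-locally a matrix algebra: there is a faithfully flat étale extension $Z\to Z'$ for which $B_2\otimes_ZZ'\cong M_n(Z')$. The classical Morita equivalence between $M_n(Z')$ and $Z'$ carries $U\otimes_ZZ'$ to a finitely generated $Z'$-module $U_0$ with $U_0$ reflexive, $\pd_{Z'}U_0\leq 1$, and $\Ext_{Z'}^1(U_0,U_0)=0$, because Morita preserves duals, projective dimension, and $\Ext$ groups.

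The key step is the following assertion about commutative regular local rings: if $S$ is a regular noetherian local ring of Krull dimension at most $3$ and $X$ is a finitely generated reflexive $S$-module with $\pd_SX\leq 1$ and $\Ext_S^1(X,X)=0$, then $X$ is free. In dimension $\leq 2$, reflexivity together with Auslander--Buchsbaum already forces $X$ to be free. In dimension $3$, suppose for contradiction that $\pd_SX=1$, and take a minimal free resolution $0\to S^a\xrightarrow{\phi}S^b\to X\to 0$; by minimality every entry of $\phi$ lies in the maximal ideal $\mathfrak{m}$. Applying $\Hom_S(-,X)$ produces an exact sequence
\begin{equation*}
0\to\End_S(X)\to X^b\xrightarrow{\phi^{\ast}}X^a\to\Ext_S^1(X,X)\to 0,
\end{equation*}
and since $\phi^{\ast}$ is represented by the transpose of $\phi$, we have $\im\phi^{\ast}\subseteq\mathfrak{m}X^a$. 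The hypothesis $\Ext_S^1(X,X)=0$ then forces $X^a=\mathfrak{m}X^a$, so Nakayama's lemma gives $X=0$, a contradiction.

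Applying the claim at every prime of $Z'$ shows $U_0$ is projective over $Z'$, whence by the étale-local Morita equivalence $U\otimes_ZZ'$ is projective over $B_2\otimes_ZZ'$; faithfully flat descent along $Z\to Z'$ then gives that $U$ is projective over $B_2$. By the tilting-to-progenerator observation of the first paragraph, $U$ is a progenerator of $B_2$ with $B_1\cong\End_{B_2}(U)$, which yields the desired Morita equivalence between $B_1$ and $B_2$. The principal technical hurdle is the étale-local reduction: one must verify carefully that the Morita equivalence between $B_2$ and $Z$ étale-locally transports the reflexivity and tilting conditions on $U$ in both directions, and that projectivity of $U$ genuinely descends from an étale trivialization back to $B_2$.
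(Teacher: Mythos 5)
Your argument is correct in outline and takes a genuinely different route from the paper. The paper's proof of Proposition~\ref{xxpro7.9} is essentially two lines: it uses Lemma~\ref{xxlem7.2} to check the standing hypotheses, invokes Theorem~\ref{xxthm6.6} to get a derived equivalence between $B_1$ and $B_2$, and then cites \cite[Proposition~5.1]{YZ1}, which upgrades a derived equivalence involving an Azumaya algebra to a Morita equivalence (the content being that any tilting complex over such a ring is concentrated in a single degree). Your proof re-derives the relevant special case of that black box by hand: you extract the tilting bimodule $U$ with $\pd_{B_2}U\le 1$ and $\Ext^1_{B_2}(U,U)=0$ from Theorems~\ref{xxthm3.15} and~\ref{xxthm6.2}, observe that a projective tilting module is a progenerator, pass to a trivializing \'etale cover of the regular center $Z(B_2)$, apply a rigidity lemma over local rings, and descend projectivity faithfully flatly. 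The trade-off is clear: the paper's route is short but outsources the heart of the matter to \cite{YZ1}, while yours is self-contained and shows explicitly \emph{why} the tilting object degenerates to a progenerator in the Azumaya case, at the cost of the \'etale-local Morita bookkeeping that you yourself flag at the end. As a bonus, your local lemma is itself a clean statement worth recording.

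Two small simplifications to your version. First, the hypotheses ``reflexive'' and ``$\K S\le 3$'' in the key local lemma play no role: a finitely generated module $X$ over any noetherian local ring $S$ with $\pd_S X\le 1$ and $\Ext^1_S(X,X)=0$ is free, by exactly the minimal-resolution-plus-Nakayama argument you give, so you may drop those hypotheses and avoid a case split on dimension. Second, $Z=Z(B_2)$ need not be a domain nor equi-codimensional; what you actually use, and what genuinely follows from the Azumaya hypothesis together with $\gldim B_2=3<\infty$, is only that $Z$ is a commutative noetherian regular ring of Krull dimension~$3$. Neither remark affects the validity of your argument.
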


\begin{proof} The hypotheses in Theorem \ref{xxthm6.6}
are automatic by Lemma \ref{xxlem7.2}. Hence $B_1$ and $B_2$
are derived equivalent. Since $B_1$ (or $B_2$) is Azumaya, by
\cite[Proposition 5.1]{YZ1}, $B_1$ and $B_2$ are Morita
equivalent.
\end{proof}

\section{Examples of NQRs of noncommutative algebras and comments}
\label{xxsec8}

In this section we give some examples of NQRs of noncommutative algebras.
At the end of the section we also give some comments.
It turns out that, except for Example \ref{xxex8.6}, all examples in this
section have the same kind of construction, namely, by noncommutative
McKay correspondence. Precisely, fixed subrings $R^H$, considered as
noncommutative quotient singularities, have NQRs of the form $R\# H$,
where $R$ and $H$ will be explained in details. However, by taking
different $R$ and $H$, we obtain many different examples.

\subsection{Graded case}
\label{xxsec8.1}
Let ${\mathcal A}_{gr,\GK}$ be the category of locally finite
${\mathbb N}$-graded noetherian algebras with finite Gelfand-Kirillov
dimension and let $\partial=\GK$. Modules are usual ${\mathbb Z}$-graded
$A$-modules. In this setting, $(A,B)$-bimodules are assumed to be
${\mathbb Z}$-graded $(A,B)$-bimodules, namely, having both a left
graded $A$-module and a right graded $B$-module structure with the
same grading. See Example \ref{xxex3.1}(1).

\begin{remark}
\label{xxrem8.1}
Many of basic results in ring theory and module theory have been
generalized to the graded setting in the literature. For example,
the graded version of some basic results in ring theory can be
found in the book \cite{NvO}. Using the graded version of these
results one can carefully adapt the arguments to reprove all
statements in Sections 1-7 in the graded setting. To save space
we will use the graded version of results in Section 1-7 without
proofs.
\end{remark}

\begin{lemma}
\label{xxlem8.2}
Retain the notation as above concerning the category
${\mathcal A}_{gr,\GK}$.
\begin{enumerate}
\item[$(1)$]
Hypothesis \ref{xxhyp1.3} holds.
\item[$(2)$]
Let $A$ and $B$ be two algebras in ${\mathcal A}_{gr,\GK}$.
Then $\partial$ is $(A,B)_{\infty}$-torsitive.
As a consequence, $\gamma_{k_1,k_2}(A,B)$ hold for all
$k_1, k_2$.
\item[$(3)$]
Hypothesis \ref{xxhyp6.5} holds.
\end{enumerate}
\end{lemma}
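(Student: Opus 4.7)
The plan is to reduce parts (1) and (3) to routine verifications given (2), so the real work is establishing the $(A,B)_\infty$-torsitive property in (2). For part (1), I would go through Hypothesis \ref{xxhyp1.3} piece by piece. Exactness and growth statements for $\GK$ on finitely generated $\mathbb{Z}$-graded modules over locally finite graded algebras are standard (cf.\ Remark \ref{xxrem1.4} and \cite[Ch.8]{MR}, \cite{KL}): a finitely generated graded module is bounded below, so $\GK(M)$ is the order of polynomial growth of $\sum_{i\leq n}\dim_\Bbbk M_i$. Condition (d) in Definition \ref{xxdef1.2} follows from the extension formula \eqref{E1.2.2}. Integrality of $\partial(A)$ comes from the restriction of the category (compare Definition \ref{xxdef0.5}), and symmetry is \cite[Corollary 5.4]{KL}. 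The bimodule condition in Hypothesis \ref{xxhyp1.3}(3) reduces to the principle that, for locally finite graded modules, $\GK$ is intrinsic to the graded vector space and independent of which algebra acts.

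For part (2), the strategy is a single resolution argument. Given a bimodule $_AM_B$ finitely generated on both sides and $N\in\modu A$, I would resolve $M$ as a \emph{left} $A$-module by finitely generated graded projectives $Q_\bullet\to M$ (possible by noetherianness of $A$) and realize $\Tor_j^A(N,M)$ as the $j$-th homology of $N\otimes_A Q_\bullet$. Since each $Q_j$ is a direct summand of a finite sum of shifts $\bigoplus_i A(-n_{j,i})$, the term $N\otimes_A Q_j$ is a graded vector space summand of $\bigoplus_i N(-n_{j,i})$, whose $\GK$ equals $\GK(N)$. Exactness of $\GK$ on subquotients then yields $\GK(\Tor_j^A(N,M))\leq\GK(N)$, which is $(A,B)_\infty$-torsitivity; the statements $\gamma_{k_1,k_2}(A,B)$ will follow by unwinding Definition \ref{xxdef1.8} and applying the symmetric version on the right.

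For part (3), Hypothesis \ref{xxhyp6.5}(1) is Hypothesis \ref{xxhyp3.8}, which requires Hypothesis \ref{xxhyp1.3} together with $\gamma_{d-2,1}(A,B)$, both supplied by parts (1) and (2), while Hypothesis \ref{xxhyp6.5}(2) is $\gamma_{0,0}(A,B)$, again from (2). So (3) is formal once (1) and (2) are in place.

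The main obstacle I anticipate is the bookkeeping in part (2): the right $B$-action on $\Tor_j^A(N,M)$ is not inherited from a bimodule resolution but from functoriality in $M$, so I must verify that the $\GK$-bound from the graded vector space structure actually controls $\GK$ as a graded right $B$-module. The key input is that for locally finite graded modules, $\GK$ can be read off the Hilbert function alone, so the $B$-action plays no role in its value.
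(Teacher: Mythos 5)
Your proposals for parts (1) and (3) track the paper's proof closely: both reduce (1) to known exactness, symmetry and integrality statements for $\GK$ on locally finite graded modules, and both treat (3) as a formal consequence of (1) and (2). For part (2) the paper simply cites \cite[Lemma 1.2(6)]{BHZ2}, whereas you reconstruct a direct argument via a graded projective resolution of $_AM$. That reconstruction is in the right spirit and gives the correct Hilbert-function bound $\GK(\Tor_j^A(N,M))\leq\GK(N_A)$; the advantage of your route is that it makes visible exactly where the $(A,B)_\infty$-torsitive estimate comes from, rather than leaving it as a black-box citation.

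However, one claim in your paragraph needs tightening. You say that ``for locally finite graded modules, $\GK$ can be read off the Hilbert function alone, so the $B$-action plays no role in its value.'' That is only true for \emph{finitely generated} graded modules. Under the convention \eqref{E1.2.2}, the $\GK$-dimension of an infinitely generated module is the supremum over its finitely generated submodules, and this can be strictly smaller than the growth rate of the Hilbert function: for instance $M=\bigoplus_{n\geq 0}(\Bbbk[x]/(x))(-n)$ over $\Bbbk[x]$ has Hilbert series $\sum_{n\geq 0}t^n$ of linear growth, yet $\GK(M)=0$. So before invoking the Hilbert-function characterization you must check that $T:=\Tor_j^A(N,M)$ is a finitely generated right $B$-module. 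This is easy but not automatic from your resolution of $_AM$ (which carries no right $B$-structure): resolve $N$ instead by finitely generated graded free right $A$-modules $P_\bullet\to N$, observe that each $P_j\otimes_A M$ is a direct summand of a finite sum of shifts of $M_B$ and hence finitely generated over $B$, and use noetherianness of $B$ to conclude the homology $T$ is finitely generated over $B$. Once that is in place your argument is correct, and the step ``$\gamma_{k_1,k_2}(A,B)$ follows by unwinding Definition \ref{xxdef1.8} and the right-sided analogue'' is sound.
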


\begin{proof} (1) By \cite[Lemma 1.2(1)]{BHZ2}, $\partial$ is exact.
By \cite[Lemma 1.2(4)]{BHZ2}, it is symmetric.  Hypothesis 1.3(2)
is \cite[Lemma 5.3(b)]{KL}.

(2) This is \cite[Lemma 1.2(6)]{BHZ2}.

(3) This follows from parts (1) and (2) and the definition.
\end{proof}

From now on until Example \ref{xxex8.7}, we are working
with the category ${\mathcal A}_{gr,\GK}$.

\begin{proof}[Proof of Theorem \ref{xxthm0.6}]
By Lemma \ref{xxlem8.2}(3), Hypothesis \ref{xxhyp6.5} holds.

(1) This is a (graded) consequence of Theorem \ref{xxthm4.2}.

(2) This is a consequence of Theorem \ref{xxthm6.6} and
Lemma \ref{xxlem8.2}(3).
\end{proof}

\begin{proposition}
\label{xxpro8.3}
Suppose that $A$ and $B$ are two noetherian locally finite
${\mathbb N}$-graded algebras that satisfy the following
\begin{enumerate}
\item[$(a)$]
$B$ is an Auslander regular $\CM$ algebra with $\GK(B):=d\geq 2$.
\item[$(b)$]
Let $e$ be an idempotent in $B$ {\rm{(}}or in $B_0${\rm{)}} and $A=eBe.$
\item[$(c)$]
$N:=eB$ is an $(A,B)$-bimodule which is finitely generated  on both sides.
\item[$(d)$]
$M:=Be$ is a $(B,A)$-bimodule which is finitely generated  on both sides.
\item[$(e)$]
$\GK(B/BeB)\leq d-2.$
\end{enumerate}
Then $(B,M,N)$ is a $NQR$ of $A$.
\end{proposition}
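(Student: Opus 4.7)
The plan is to verify the defining conditions for $(B, {}_B M_A, {}_A N_B)$ to be an NQR of $A$ in the sense of Definition~\ref{xxdef3.2}(2): namely that $B$ is Auslander regular and $\partial$-$\CM$ in $\mathcal{A}_{gr,\GK}$ with $\GK(B)=d=\GK(A)$, and that the bimodule $(d-2)$-isomorphisms $N\otimes_B M\cong_{d-2}A$ and $M\otimes_A N\cong_{d-2}B$ hold.

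Two of these are essentially immediate. Hypothesis~$(a)$ supplies that $B$ is Auslander regular and $\CM$ with $\GK(B)=d$. For $\GK(A)=d$, note that $B$ is $0$-pure as a right $B$-module (Corollary~\ref{xxcor2.16}), so the nonzero submodule $Be\subseteq B$ has $\GK({}_B Be)=d$; symmetry of $\GK$ (Lemma~\ref{xxlem8.2}(1)) together with hypothesis~$(d)$ then forces $\GK(Be_A)=d$, whence $\GK(A)\geq d$, while the reverse inequality follows from $A=eBe$ being a graded subalgebra of $B$. For the tensor $N\otimes_B M$ there is the canonical equality of $(A,A)$-bimodules
\[N\otimes_B M\;=\;eB\otimes_B Be\;\cong\;eBe\;=\;A,\]
so this $(d-2)$-isomorphism is actually an honest isomorphism.

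The substantive step is the other $(d-2)$-isomorphism. Consider the multiplication map $\mu\colon Be\otimes_{eBe} eB\to B$, a morphism of $(B,B)$-bimodules with image $BeB$. Its cokernel $B/BeB$ has $\GK\leq d-2$ by hypothesis~$(e)$. The crucial step is to bound the kernel $K:=\ker\mu$, and the key claim will be that $K$ is annihilated by $BeB$ on both sides. Take $k=\sum_i b_i e\otimes eb'_i\in K$, so that $\sum_i b_i e b'_i=0$ in $B$, and any $eb''\in eB$. Using the idempotent identity $e\cdot e=e$ to write $eb'_ieb''=(eb'_ie)(eb'')$ with $eb'_ie\in A$, and then pushing this $A$-factor across the tensor, one computes
\[k\cdot eb''\;=\;\sum_i b_ie\otimes eb'_ieb''\;=\;\sum_i b_ie\cdot(eb'_ie)\otimes eb''\;=\;\sum_i (b_ieb'_i)e\otimes eb''\;=\;0.\]
Since $K$ is closed under the right $B$-action, this extends to $K\cdot BeB=0$, and a mirror calculation on the left factor of the tensor yields $(BeB)\cdot K=0$.

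Thus $K$ is a $(B/BeB, B/BeB)$-bimodule. Because hypothesis~$(d)$ provides a finite right $A$-generating set $\{m_1,\dots,m_r\}$ of $Be$, the set $\{m_i\otimes e\}$ generates $Be\otimes_A eB$ as a right $B$-module (indeed $m_i\otimes eb''=(m_i\otimes e)\cdot b''$), so $Be\otimes_A eB$ is a noetherian right $B$-module and $K$ inherits finite generation from it. Therefore $\GK(K)\leq\GK(B/BeB)\leq d-2$, which together with the cokernel bound delivers $M\otimes_A N\cong_{d-2} B$ as bimodules and completes the verification. The main obstacle is recognising the right structural claim about $K$: once one sees that the idempotent identity $e^2=e$ together with the relation $\sum_i b_ieb'_i=0$ forces $K$ to be killed by $BeB$ on both sides, the argument reduces cleanly to hypothesis~$(e)$.
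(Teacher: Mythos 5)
Your proposal is correct, and it achieves the result by a more self-contained route than the paper. The paper's own proof defers the key $(d-2)$-isomorphism $M\otimes_A N\cong_{d-2}B$ to an external result: it verifies that the hypotheses of \cite[Lemma 2.3]{BHZ1} are met (using Lemma~\ref{xxlem8.2}(2) to get $\gamma_{d-2,1}(eB)$ and checking \cite[Hypothesis 2.1]{BHZ1}) and then cites that lemma as a black box; the rest of the proof is the one-line observation $N\otimes_B M=eBe=A$. You instead unpack the argument from scratch: you analyse the multiplication map $\mu\colon Be\otimes_{eBe}eB\to B$, observe that hypothesis~(e) controls the cokernel $B/BeB$, and --- this is the essential point --- prove directly that the kernel $K$ is annihilated by $BeB$ on both sides via the idempotent manipulation $b_ie\otimes eb'_ieb''=(b_ieb'_ie)\otimes eb''$, so that $K$ becomes a finitely generated $(B/BeB)$-module and inherits $\GK(K)\le d-2$. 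Your finite-generation step (using $\{m_i\otimes e\}$ as right-$B$-generators of $Be\otimes_A eB$ coming from an $A$-generating set of $Be$, plus noetherianity of $B$) and your verification that $\GK(A)=d$ via $0$-purity of $B$ and symmetry of $\GK$ are both things the paper leaves implicit. The trade-off is that the paper's version is shorter and automatically works under the more abstract $\gamma_{d-2,1}$ hypothesis used throughout \cite{BHZ1}, whereas yours is entirely elementary, transparent, and does not require the reader to chase down the external lemma --- it is essentially a from-first-principles proof of the special case of \cite[Lemma 2.3]{BHZ1} that is actually needed here.
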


\begin{proof}
Below is a proof in the ungraded setting which can
easily adapted to the graded case. Since $\partial=\GK,$
by \cite[Lemma 2.2(ii)]{BHZ2}, $\partial((Ne)_A)\leq
\partial(N_B)$ for every finitely generated right
$B$-module $N$, which is precisely
\cite[Hypothesis 2.1(7)]{BHZ2}. It is easy to verify
\cite[Hypothesis 2.1(1-6)]{BHZ1}. By Lemma \ref{xxlem8.2}(2),
$\partial$ satisfies $\gamma_{d-2,1}(eB),$ which is precisely
\cite[(E2.3.1)]{BHZ1}. Therefore we can apply the proof of
\cite[Lemma 2.3]{BHZ1}. By the proof of \cite[Lemma 2.3]{BHZ1},
the hypothesis $\GK(B/BeB)\leq d-2$ implies that
$$M\otimes_A N\cong_{d-2} B.$$
On the other hand, it is clear that
$$N\otimes_{B} M=eBe=A.$$
Therefore $(B,M,N)$ is a NQR of $A$.
\end{proof}

\begin{remark}
\label{xxrem8.4}
\begin{enumerate}
\item[(1)]
By the above proof, Proposition \ref{xxpro8.3} holds in the
ungraded case as long as condition \cite[(E2.3.1)]{BHZ1} holds.
Note that \cite[(E2.3.1)]{BHZ1} is a consequence of
$\partial$ being $(A,B)_{\infty}$-torsitive for those
algebras $A$ and $B$ in the category ${\mathcal A}$.
\item[(2)]
Similar to the proof of Proposition \ref{xxpro8.3},
if we replace condition (e) by
$$\GK (B/BeB)\leq d-2-s,$$
then $(B,M,N)$ is an $s$-NQR of $A$ in the sense of
Definition \ref{xxdef3.2}(1).
\item[(3)]
If $A=:R$ is a commutative normal Gorenstein domain, then
the NQRs in Proposition \ref{xxpro8.3} might not be in
the category of ${\mathcal A}_{R, \K}$ (Section \ref{xxsec7})
as we are not require that $B$ is $R$-central. Keep this
in mind, it is also possible that $R$ has a NCCR in the category
${\mathcal A}_{R, \K}$ and a NQR not in ${\mathcal A}_{R, \K}$
(but in a different category such as ${\mathcal A}_{gr,\GK}$).
\end{enumerate}
\end{remark}

Explicit examples of NQRs in the graded case are given next.

\begin{example}
\label{xxex8.5}
Suppose the following hold.
\begin{enumerate}
\item[(a)]
Let $R$ be a noetherian connected graded (locally finite) Auslander regular
$\CM$ algebra with $\GK(R)=d\geq 2$.
\item[(b)]
Let $H$ be a semisimple Hopf algebra acting on $R$ homogeneously
and inner-faithfully with integral $\int$ such that
$\varepsilon(\int)=1$.
\item[(c)]
Let $B$ be the smash product algebra $R\# H$ with $e:=1\#\int\in B$
and $A$ be the fixed subring $R^H$.
\item[(d)]
Suppose $\GK(B/BeB)\leq d-2.$
\end{enumerate}
By Proposition \ref{xxpro8.3},
$(B,M,N):=(B,Be,eB)$ is a NQR of $A$. This produces
many examples of NQRs in following work. Note that the condition
(d) is equivalent to that a version of Auslander's theorem holds,
namely, the natural algebra morphism
\begin{equation}
\notag%\label{E8.5.1}\tag{E8.5.1}
\phi: \quad R\# H\longrightarrow \End_{R^H}(R)
\end{equation}
is an isomorphism. In \cite{CKWZ1, CKWZ2, BHZ1, BHZ2, CKZ, GKMW},
the results state that Auslander's theorem holds instead of
condition (d). Auslander's theorem
is a fundamental ingredient in the study of the McKay correspondence,
see \cite{CKWZ1, CKWZ2}. In the following we further assume that
${\text{char}}\; \Bbbk=0$.
\begin{enumerate}
\item[(1)]
Let $R$ be an Auslander regular and $\CM$ algebra of global dimension
two and $H$ act on $R$ with trivial homological determinant.
Then $A:=R^H$ has a NQR \cite[Theorem 0.3]{CKWZ1}.
\item[(2)]
Let $R$ be a graded noetherian
down-up algebra (of global dimension three) which is not
$A(\alpha,-1)$ and $G$ be a finite subgroup of $\Aut_{gr}(R)$.
Then $A:=R^G$ has a NQR \cite[Theorem 0.6]{BHZ2}.
\item[(3)]
Let $R$ be a graded noetherian down-up algebra (of global
dimension three) and $G$ be a finite subgroup coacting on
$R$ with trivial homological determinant. Then $A:=R^{co \;G}$ has a
NQR \cite[Theorem 0.1]{CKZ}.
\item[(4)]
Let $R=\Bbbk_{-1}[x_1,\cdots,x_n]$ and ${\mathbb S}_n$ act on
$R$ naturally permuting variables $x_i$. Then $A:=R^{G}$ has a
NQR for every nontrivial subgroup $G\subseteq {\mathbb S}_n$
\cite[Theorem 2.4]{GKMW}. A special case was proved earlier
in \cite[Theorem 0.5]{BHZ1}.
\end{enumerate}
\end{example}

Next we give an example of a NQR that does not fit into the framework
of Proposition \ref{xxpro8.3}.

\begin{example}
\label{xxex8.6}
Let $q$ be a nonzero scalar in $\Bbbk$ that is not a root of unity.
Let $B$ be the algebra $\Bbbk\langle x,y\rangle/(yx-qxy-x^2)$,
which is connected graded noetherian Auslander regular and
$\CM$ of $\GK$ 2. Let $A:=\Bbbk+ By$ be the subalgebra of $B$
as given in \cite[Notation 2.1]{SZ1}. By \cite[Theorem 2.3]{SZ1},
$A$ is a noetherian algebra that does not satisfy the condition
$\chi$ in the sense of \cite{AZ}. As a consequence, $A$ does
not admit a balanced dualizing complex in the sense of Yekutieli
\cite{Y1}. In other words, this algebra does not have nice
properties required in noncommutative algebraic projective geometry.
By \cite[Corollary 2.8]{SZ1},
\begin{equation}
\notag%\label{E8.6.1}\tag{E8.6.1}
\qmod_0 A\cong \qmod_0 B.
\end{equation}
This indicates that $A$ might have a NQR. Indeed, this is the case
as we show next.

Let $M$ be the (graded) $(B,A)$-bimodule $By$ and $N$ be the (graded)
$(A,B)$-bimodule $B$. One can verify that $M$ and $N$ are finitely
generated on both sides. Note that, as a right $A$-module, $M\cong_0
A$ since we have an exact sequence $0\to M\to A\to \Bbbk\to 0$.
Hence, following the Hilbert series computations,
$$N\otimes_B M\cong_0 B\otimes_B By\cong By\cong_0 A$$
as $A$-bimodules, and
$$M\otimes_A N\cong_0 By\otimes_A B\cong_0 ByB\cong_0 B$$
as $B$-bimodules, where the last $\cong_0$ follows from the fact
that $ByB$ is co-finite-dimensional inside $B$. By definition,
$B$ is a NQR of $A$.
\end{example}

\subsection{Ungraded case}
\label{xxsec8.2}

All NQRs in the graded case are NQRs in the ungraded setting.
Below are other ungraded examples.

\begin{example}
\label{xxex8.7}
Let ${\mathcal A}_{ungr,\GK}$ be the category of affine $\Bbbk$-algebras
with finite Gelfand-Kirillov dimension and let $\partial = \GK$.
Suppose the following hold.
\begin{enumerate}
\item[(a)]
Let $R$ be a noetherian Auslander regular
$\CM$ algebra with $\GK(R)=d\geq 2$.
\item[(b)]
Let $H$ be a semisimple Hopf algebra acting on $R$
and inner-faithfully with integral $\int$ such that
$\varepsilon(\int)=1$.
\item[(c)]
Let $B$ be the smash product algebra $R\# H$ with $e:=1\#\int\in B$
and $A$ be the fixed subring $R^H$.
\item[(d)]
Suppose $\GK(B/BeB)\leq d-2.$
\end{enumerate}
If \cite[(E2.3.1)]{BHZ1} holds, by Proposition \ref{xxpro8.3},
$(B,M,N):=(B,(R\# H)e,e(R\# H))$ is a NQR of $A$. We have
some examples in the ungraded case. Here is the
first example. Again assume that ${\text{char}}\; \Bbbk=0$.
Let $R$ be the universal enveloping algebra $U({\mathfrak g})$
of a finite dimensional Lie algebra ${\mathfrak g}$. Suppose
that ${\mathfrak g} \neq {\mathfrak g}' \ltimes \Bbbk x$ for
a 1-dimensional Lie ideal $\Bbbk x\subseteq {\mathfrak g}$
and and a Lie subalgebra ${\mathfrak g}'\subset {\mathfrak g}$.
Then \cite[Corollary 0.5]{BHZ2} implies that $R^G$ has a NQR
for every finite group $G\subseteq \Aut_{Lie}({\mathfrak g})$.
\end{example}

\begin{example}
\label{xxex8.8} Let ${\mathcal A}_{PI, \GK}$ be the category of
affine $\Bbbk$-algebras that satisfy a polynomial identity and let
$\partial = \GK$. In fact, $\GK=\K$ in this case. But we do not assume
that algebras are central-finite. By \cite[Lemma 3.1]{BHZ2},
$\partial$ is $(A,B)_{\infty}$-torsitive for two algebras $A$ and $B$ in
${\mathcal A}_{ungr,PI}$. As a consequence, $\gamma_{k_1,k_2}(A,B)$
hold for all $k_1, k_2$, see Lemma \ref{xxlem7.2}. Then in the
setting of Example \ref{xxex8.6}, $R^H$ has a NQR $B:=R\# H$
when $\partial(B/BeB)\leq \partial(B)-2$, or equivalently,
when the Auslander's theorem holds by \cite[Theorem 3.3]{BHZ2}.
Explicit examples of $R$ and $H$ are given in
\cite[Corollaries 3.4 and 3.7]{BHZ2}.
\end{example}

\subsection{Comments on potential directions of further research}
\label{xxsec8.3}

There are some further studies of NQRs in dimension two in 
\cite[Theorem 0.2(1)]{QWZ} where the Gabriel quiver of a
NQR is classified. It is natural to ask what we can do in 
dimension three.

The next question was suggested by the referee. 
Is it possible to remove the condition that the singular 
ring $A$ (either in the commutative regime or in the 
noncommutative regime) is Gorenstein or CM? Instead, we 
just assume that $A$ has an Auslander dualizing complex. 
Note that, except for the definition (Definition \ref{xxdef3.2}), 
we use the Gorenstein or CM property in a large part 
of the paper. 

In future study of higher dimensional NQRs, dualizing complexes
and derived categories \cite{YZ2, Y2} would play a more important
role than the classical methods presented in this paper.

\subsection*{Acknowledgments}

The authors thank Xiaowei Xu and Zhibing Zhao for their careful 
reading of a draft of the paper and useful suggestions and thank 
the referees for their very careful reading and extremely valuable 
comments. X.-S. Qin was partially supported by the Foundation of 
China Scholarship Council (Grant No. [2016]3100). Y.-H. Wang was 
partially supported by the Foundation of China Scholarship Council
(Grant No. [2016]3009), the Foundation of Shanghai Science and 
Technology Committee (Grant No. 15511107300), the Natural Science 
Foundation of China (No. 11871071). Y.H. Wang and X.-S. Qin thank 
the Department of Mathematics, University of Washington for its 
very supportive hospitality during their visits. J.J. Zhang was 
partially supported by the US National Science Foundation (Grant 
Nos. DMS-1402863 and DMS-1700825).

\end{document}